      \numberwithin{equation}{section}
      \theoremstyle{plain}
      \newtheorem{theorem}{Theorem}[section]
      \newtheorem{lemma}[theorem]{Lemma}
      \newtheorem{corollary}[theorem]{Corollary}
      \newtheorem{proposition}[theorem]{Proposition}
      \theoremstyle{definition}
      \theoremstyle{remark}
      \newtheorem{remark}[theorem]{Remark}
      \newcommand{\e}{\mathrm{e}}
      \newcommand{\ep}{\varepsilon}
      \renewcommand{\P}{\mathbb P}
      \newcommand{\Z}{\mathbb{Z}}
      \newcommand{\N}{\mathbb{N}}
      \newcommand{\n}{\mathbf{n}}
      \renewcommand{\L}{\Lambda}
      \newcommand{\ising}[1]{\langle #1 \rangle}
      \newcommand{\lr}[4]{#3\xleftrightarrow[#1]{#2} #4}
      \newcommand{\nlr}[4]{#3\mathrel{\mathop{\centernot\longleftrightarrow}_{#1}^{#2}} #4}
      \newcommand{\mb}{\mathbf}
      \def\@setcopyright{}
      \def\serieslogo@{}
\begin{document}
      	\title{Exponential decay of truncated correlations for the Ising model in any dimension for all but the critical temperature}
      	
      \author{Hugo Duminil-Copin\footnotemark[1]\footnote{Universit\'e de Gen\`eve} \footnotemark[2]\footnote{Institut des Hautes \'Etudes Scientifiques} , Subhajit Goswami\footnotemark[2] , Aran Raoufi\footnotemark[3]\footnote{ETH Zurich}}	
      	\date{\today}
      	
      	\maketitle
        
                \begin{abstract}
               The truncated two-point function of the ferromagnetic Ising model on $\mathbb Z^d$ ($d\ge3$) in its pure phases is proven to decay exponentially fast throughout the ordered regime ($\beta>\beta_c$ and $h=0$). Together with the previously known results, this implies that the exponential clustering property holds throughout the model's phase diagram except for the critical point: $(\beta,h) =  (\beta_c,0)$.
                \end{abstract}
\section{Introduction} \label{sec:intro}

\subsection{Exponential decay of truncated correlations of the Ising model}
In addition to its original presentation as a model for the phase transition in ferromagnets, the Ising model has attracted attention from a variety of perspectives.  These range from studies of phase transitions exhibited by the equilibrium states to the study of cutoff phenomena and transitions in stochastic processes given for instance by  Glauber dynamics and Metropolis algorithms~\cite{LubSly13}.  Also, universality of critical phenomena in the Ising model justifies the fact that the theory of the Ising model provides information also about many other systems.

As is well known,  sufficiently far from phase transitions,  systems of statistical physics exhibit exponential relaxation of truncated correlations~\cite{DobShl}, in both  the equilibrium and the dynamical sense.  It is more challenging to narrow the range of exceptions to a set of points, or lines,  in the model's phase space.   The main result in this article completes that task for the $d$-dimensional nearest-neighbor ferromagnetic Ising model. The results extend to finite-range Ising models, but we choose to focus on the nearest-neighbor case for simplicity.

To set the notation, let us recall the definition of the model on a graph $G$ with vertex-set $V$ and edge-set $E$.   Associated with the graph's vertex-set is a collection of binary variables 
 $\sigma=(\sigma_x:x\in V)$, with $\sigma_x\in\{-1,1\}$.  
The system's Hamiltonian is given by the function 
\begin{equation} \label{eq:H}
H_ {G,h}(\sigma)~:=~  -\sum_{x\in  V} h \sigma_x \ -\ \sum_{\{x,y\}\subset V } J_{x,y}  \sigma_x\sigma_y\,,
\end{equation}
with  the magnetic field $h$ and coupling constants $J_{x,y}$.   In the case on which we focus here, 
$G$ equals $\Z^d$ (the graph is the regular $d$-dimensional lattice) and 
\begin{equation*} 
J_{x,y} \ = \  \begin{cases}   1 &     \text{ if $\{x,y\}\in E$,  }  \\ 
                   0  &   \text{ otherwise, }
                       \end{cases}  
\end{equation*} 
which corresponds to nearest-neighbor ferromagnetic interactions.

On finite graphs, the Gibbs equilibrium states at inverse temperature  $\beta \in (0, \infty)$  are given by  probability measures on the space of configurations  under which the expected value of a function $f:\{-1,1\}^{V}\rightarrow \mathbb R$ is 
$$\langle f\rangle_{ G,\beta,h}=\frac1{Z( G,\beta,h)}\sum_{\sigma\in\{-1,1\}^{V}}f(\sigma)\exp[-\beta H_ {G,h}(\sigma)]\,  ,$$
where the sum is normalized by the partition function $Z( G,\beta,h)$   so that $\langle 1\rangle_{ G,\beta,h}=1$.  
Examples of Gibbs measures on $\Z^d$ can be constructed as 
weak limits of the finite volume Gibbs measures on finite subgraphs $G\subset \Z^d$ which locally converge to $\Z^d$. We denote the measure thus obtained by $\langle\cdot\rangle_{\beta,h}$.
Also, one  defines
\begin{equation}\label{lim}
\langle \cdot\rangle_{\beta}^+ \  = \   \lim_{h\searrow 0} \langle \cdot\rangle_{\beta,h}  \,. 
\end{equation} 
where the  limit is meant 
 in the ``weak sense'' (i.e.~for the expectation values of local functions of the spins). 
Convergence  can be deduced by   monotonicity arguments based on correlation inequalities, by which one may also establish the existence of $\beta_c=\beta_c(\Z^d)\in[0,\infty]$ such that      
 \begin{eqnarray}  
0 \leq  \beta < \beta_c  &  \Rightarrow & \langle \sigma_x\rangle_{\beta}^+ \ = \  0\ ,\ \forall x\in \Z^d, \notag \\  
 \beta > \beta_c   &  \Rightarrow &   \langle \sigma_x\rangle_{\beta}^+ \ >\  0 \ ,\ \forall x\in \Z^d. 
 \end{eqnarray} 

For a given Gibbs measure $\langle\cdot\rangle$, in finite or infinite volume,  the truncated two-point 
correlation function is defined as:  
$$\langle \sigma_0;\sigma_x\rangle:=\langle \sigma_0\sigma_x\rangle-\langle \sigma_0\rangle\langle\sigma_x\rangle.$$
For $\beta > \beta_c$, there exists a spin-flip symmetric equilibrium state with long-range order, for which the truncated correlations do not decay to zero.   However, the relevant question is the rate of decay of the pure state $\langle \cdot\rangle_{\beta}^+$ and its symmetric image  
$\langle \cdot\rangle_{\beta}^-$.  The main result of this article is the following.

\begin{theorem}\label{thm:main}
For the nearest-neighbor Ising model on $\Z^d$ in  dimension $d \geq 3$,  for any $\beta>\beta_c$ there exists $ c = c(\beta,d) > 0$ such that for every $x,y\in\Z^d$,
\begin{equation}\label{eq:main}
0\le \langle \sigma_x;\sigma_y\rangle_{\beta}^+\le \exp[- c\|x-y\|].
\end{equation} 
\end{theorem}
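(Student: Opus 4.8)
The plan is to pass to the random-current representation of the model, in which the truncated two-point function acquires a percolation-type meaning, and then to prove exponential decay of the resulting connection probability using $\beta>\beta_c$.

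\textbf{Step 1: a random-current formula.} The lower bound $0\le\langle\sigma_x;\sigma_y\rangle_\beta^+$ is a consequence of Griffiths' inequality, so only the upper bound requires an argument. I would work in a finite box with free boundary conditions, magnetic field $h>0$, and the ghost vertex $\mathfrak g$ joined to every site $v$ with a coupling of strength $h$, so that $\langle\sigma_v\rangle_{\beta,h}=\langle\sigma_v\sigma_{\mathfrak g}\rangle_{\beta,h}$. Using the random-current representation of $\langle\sigma_x\sigma_{\mathfrak g}\rangle\langle\sigma_y\sigma_{\mathfrak g}\rangle$ together with the switching lemma, and then letting the box grow to $\Z^d$ and $h\downarrow0$, one obtains
\[
\langle\sigma_x;\sigma_y\rangle_\beta^+\;=\;\langle\sigma_x\sigma_y\rangle_\beta^+\cdot\mathbf{P}_\beta^{x,y}\big[\,x\nleftrightarrow\mathfrak g\,\big],
\]
where $\mathbf{P}_\beta^{x,y}$ denotes the law of the sum $\mathbf n_1+\mathbf n_2$ of two independent currents with $\partial\mathbf n_1=\{x,y\}$ and $\partial\mathbf n_2=\emptyset$ in the plus phase (the ghost being understood in the $h\downarrow0$ limiting sense). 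Since $\langle\sigma_x\sigma_y\rangle_\beta^+\le1$, Theorem~\ref{thm:main} reduces to proving
\[
\mathbf{P}_\beta^{x,y}\big[\,x\nleftrightarrow\mathfrak g\,\big]\;\le\;e^{-c\|x-y\|}.
\]

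\textbf{Step 2: reduction to a supercritical cluster estimate.} The constraint $\partial\mathbf n_1=\{x,y\}$ forces $x\leftrightarrow y$ in $\mathbf n_1+\mathbf n_2$, so the cluster $\mathcal C$ of $x$ always contains $y$; on the event $\{x\nleftrightarrow\mathfrak g\}$ it therefore has diameter at least $\|x-y\|$ and avoids $\mathfrak g$. I would split according to whether $\mathcal C$ is infinite or finite. When $\beta>\beta_c$ one has $m^*(\beta)>0$, which in the random-current picture means that the sourceless current percolates to $\mathfrak g$; a Burton--Keane-type argument adapted to random currents then shows that $\mathbf n_1+\mathbf n_2$ has a unique infinite cluster and that it contains $\mathfrak g$, so on $\{x\nleftrightarrow\mathfrak g\}$ the cluster $\mathcal C$ must be finite. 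The theorem is thus reduced to the statement that, in the ordered phase, double random currents have exponentially thin finite clusters:
\[
\mathbf{P}_\beta^{x,y}\big[\,\mathcal C\ \text{finite and}\ \mathrm{diam}(\mathcal C)\ge R\,\big]\;\le\;e^{-cR}.
\]

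\textbf{Step 3: exponential tail of finite clusters by renormalisation.} This I would prove by coarse-graining, in the spirit of supercritical-percolation renormalisation. Tile $\Z^d$ into boxes of side $L$ and call a box \emph{good} if, in a bounded neighbourhood of it, the current contains a cluster that crosses the box in a strong (blocking) sense and is joined, within that neighbourhood, to the infinite/ghost cluster. Strict supercriticality $\beta>\beta_c$ is what guarantees that the probability of a box being good tends to $1$ as $L\to\infty$; since the field of good boxes is finitely dependent, it then has high density and dominates a highly supercritical Bernoulli percolation. A finite cluster of diameter $\ge R$ must be enclosed by a $\ast$-connected surface of bad boxes of cardinality $\gtrsim R/L$, and a Peierls-type estimate (using the finite dependence and the low density of bad boxes) bounds the probability of any such surface by $e^{-cR}$, provided $L=L(\beta,d)$ is taken beyond the Peierls threshold. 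This yields the required bound and fixes $c=c(\beta,d)>0$, which one expects to tend to $0$ as $\beta\downarrow\beta_c$.

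\textbf{Main difficulty.} The bulk of the work is Step 3, together with the uniqueness input of Step 2. Random currents are not monotone in the naive sense and possess no finite-energy property, so the coarse-graining cannot be run verbatim as for Bernoulli or FK percolation; one has either to import the supercritical coarse-graining / ``slab percolation'' properties of the FK--Ising (equivalently random-current) measure that hold throughout $\beta>\beta_c$, or to re-derive them using the switching lemma and the couplings between random currents and FK percolation, and then control the finitely dependent renormalised field. An essentially equivalent route replaces Steps 2--3 by a Simon--Lieb-type finite-size criterion: one shows that, inside a large box $S\ni x$, the cluster of $x$ reaches $\partial S$ before $\mathfrak g$ with probability at most some $\rho(\beta,d)<1$ (again the point where $\beta>\beta_c$ is used) and iterates this along a chain of boxes from $x$ to $y$; there the delicate point is to justify the interior/exterior factorisation of the random current, again via the switching lemma.
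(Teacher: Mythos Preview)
Your random-current identity in Step~1 is correct and close to what the paper uses internally (compare \eqref{eq:switching2} and \eqref{eq:two_arms_bnd}), and the reduction in Step~2 is fine: on $\{x\nleftrightarrow\mathfrak g\}$ the cluster of $x$ contains $y$ and is finite, so one is indeed asking for an exponential tail on finite clusters of the sourced double current.

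The genuine gap is Step~3. You propose to run a Pisztora-type coarse-graining \emph{directly on the double random current}, defining good boxes by local connectivity and attachment to the ghost cluster, and then to dominate the bad-box field by a low-density Bernoulli process. You correctly flag that random currents have neither FKG nor finite energy, but you do not supply a mechanism to overcome this, and ``importing'' the known FK-Ising inputs (Bodineau's slab percolation, Pisztora's renormalisation, the exponential tail \eqref{eq:exponential decay FK}) does not do the job: those are statements about the FK measure, not about the source-biased law $\mathbf P^{x,y}_\beta$, and transferring them to the latter is exactly the difficulty. The paper makes this point explicitly after the proof of Theorem~\ref{thm:main}: the exponential tail of finite FK clusters alone does \emph{not} yield Theorem~\ref{thm:main}, because correlations between distant connection events remain uncontrolled---and that residual correlation is precisely what your event $\{x\nleftrightarrow\mathfrak g\}$ encodes under the sourced measure.

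The paper's route is different in structure. Theorem~\ref{thm:main} is derived not from random currents but from an \emph{exponential mixing} property of supercritical FK-Ising (Theorem~\ref{thm:mixing}); the coarse-graining is carried out entirely on the FK side (Section~3), where FKG and domain Markov make the Liggett--Schonmann--Stacey domination legitimate. Random currents enter only to prove the seed estimate Proposition~\ref{thm:mixing1}, which compares $\phi^1_{\Lambda_N}[\omega_e]$ and $\phi^0_{\Lambda_N}[\omega_e]$ for a single edge $e$ via a disconnection probability for a mixed double current on $(\Lambda_N^+,\Lambda_N)$. That probability is then bounded by a multi-valued-map (surgery) argument, Lemmata~\ref{lem:gluing3}--\ref{lem:gluing2}, which manufactures many new connections at controlled cost using that the ambient FK configuration is well connected (Proposition~\ref{prop:connectivity}). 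This combinatorial surgery is the new idea of the paper and has no analogue in your sketch; your Simon--Lieb alternative runs into the same wall, since factorising the sourced current across the boundary of a box is again a boundary-condition comparison that the paper only settles through Proposition~\ref{thm:mixing1}.
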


The previous result holds for any extremal translation invariant Gibbs state, since by \cite{Bod05,Rao17}, those are given by $\langle\cdot\rangle_\beta^+$ and $\langle\cdot\rangle_\beta^-$. As mentioned above, the theorem would extend to finite-range interactions, but we prefer to focus on this context for simplicity of notations.
Jointly with the previously known results, this completes the proof  that for the nearest-neighbor Ising model in any dimension it is   only at the critical point $(h,\beta) = (0,\beta_c)$ that the pure state's truncated two-point function fails to decay exponentially fast.   
The aforementioned statement 
which Theorem~\ref{thm:main}  supplements include:  
\begin{enumerate}
\item   At any $h\neq 0$ the limiting state  is analytic in $h$ and $\beta$, and it exhibits exponential decay of suitably truncated correlations. This was proven by Lebowitz and Penrose in \cite{LP68} using an argument which drew on the model's   Lee-Yang property~\cite{LY52}, or using the random-current representation.
\item 
For $h=0$ and $\beta < \beta_c$  the exponential decay in arbitrary dimension was 
established in \cite{ABF87} (see also \cite{DumTas15} for an alternative proof).   

\item In the converse direction: 
the vanishing of the spontaneous magnetization  at   $(h,\beta) = (0,\beta_c)$ for the nearest neighbor model in any dimension~\cite{AizDumSid15}  together with the lower bound 
\begin{equation}
\sum_{\|x\|_\infty =R}\langle \sigma_0\sigma_x\rangle_{\beta_c}\ \ge \ 1 \, , 
\end{equation} 
which was established by Simon~\cite{Sim80}, imply that for any $d\ge 2$ at the critical point the truncated two-point function does not decay exponentially fast.  
\item And, to mention a last result: the special case of 
$d=2$ is  analyzable through Onsager's exact solution~\cite{McCWu73} or using the Kramers-Wannier duality (the decay of truncated correlations can be obtained via the decay of correlations in the high-temperature dual Ising model). 
\end{enumerate} 

Let us add that the truncated two-point function offers a bound on the decay of more general correlations
which follows easily from the following lemma (whose proof follows from the switching lemma; see the discussion in Section~\ref{subsec:random_current}). For a set of vertices $A$, set $\sigma_A:=\prod_{x\in A}\sigma_x$.

\begin{lemma}\label{lem:00}  For every finite graph $G$, every $\beta>\beta_c$ and every two disjoint sets of vertices $A$ and $B$, we have that
\begin{equation}\label{truncated_relation} 
0 \le  \langle \sigma_A\sigma_B \rangle^+_{G,\beta} - 
\langle \sigma_A \rangle^+_{G,\beta} \langle \sigma_B\rangle ^+_{G,\beta} \le \, 2^{| A| + | B| - 4} \, \sum_{\substack{a\in A\\ b\in B}}  
\langle \sigma_a;\sigma_b \rangle^{+}_{G,\beta}   \, .  
\end{equation} 
 \end{lemma}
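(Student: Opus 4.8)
The plan is to deduce Lemma~\ref{lem:00} from the switching lemma in the random-current representation. First I would recall the standard setup: for a finite graph $G$ and a current $\mathbf{n}$ on its edges (extended by an extra ``ghost'' vertex $\mathfrak{g}$ carrying the magnetic field, so that $h>0$ is encoded by edges $\{x,\mathfrak{g}\}$ with weight $h$), a vertex $x$ is a source of $\mathbf{n}$ if the sum of $\mathbf{n}_e$ over edges incident to $x$ is odd, and $\partial\mathbf{n}$ denotes the source set. The partition function with sources $S$ is $Z_S := \sum_{\partial\mathbf{n}=S}\prod_e \beta^{\mathbf{n}_e}/\mathbf{n}_e!$, and for any set $S$ one has $\langle\sigma_S\rangle_{G,\beta,h} = Z_S / Z_\emptyset$. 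The switching lemma states that for current configurations $\mathbf{n}_1$ with $\partial\mathbf{n}_1 = S_1$ and $\mathbf{n}_2$ with $\partial\mathbf{n}_2 = S_2$, summing over the pair is equivalent to summing over pairs with sources $S_1\triangle S_2$ and $\emptyset$ subject to the constraint that the relevant vertices are connected in $\mathbf{n}_1+\mathbf{n}_2$; in particular this yields the classical representation $\langle\sigma_A;\sigma_B\rangle$ in terms of the probability (in the appropriate doubled random-current measure) that $A$ and $B$ are ``wired together'' by a cluster.

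Next I would record the clean two-point version: for $a,b$ vertices, $\langle\sigma_a\sigma_b\rangle^2 - \dots$ — more precisely, the switching lemma gives
\[
\langle \sigma_a;\sigma_b\rangle_{G,\beta,h} \;=\; \langle\sigma_a\sigma_b\rangle_{G,\beta,h}\,\mathbf{P}_{G,\beta,h}^{\{a,b\},\emptyset}\bigl[a \xleftrightarrow{\ \ } \mathfrak{g} \text{ in } \mathbf{n}_1+\mathbf{n}_2\bigr],
\]
or an analogous identity; this is the $h>0$ analogue of the Aizenman identity, and taking $h\searrow0$ passes to $\langle\cdot\rangle^+_\beta$ by monotone/weak convergence of finite-volume quantities. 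The point I want is that $\langle\sigma_A;\sigma_B\rangle^+ \ge 0$ is immediate from this (it is a probability times a nonnegative correlation), and more importantly that the event controlling $\langle\sigma_A\sigma_B\rangle - \langle\sigma_A\rangle\langle\sigma_B\rangle$ is a ``connection between the $A$-part and the $B$-part of the source set.''

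Then comes the combinatorial heart. Applying the switching lemma to $Z_{A\triangle B}\,Z_\emptyset$ versus $Z_A\,Z_B$ (working with symmetric difference to handle $\sigma_A\sigma_B = \sigma_{A\triangle B}$ when $A\cap B\ne\emptyset$, or just assuming $A\cap B=\emptyset$ after relabelling since squared spins are $1$), one expresses $\langle\sigma_A\sigma_B\rangle - \langle\sigma_A\rangle\langle\sigma_B\rangle$ as $\langle\sigma_A\sigma_B\rangle$ times the probability, under the doubled measure with sources $(A\triangle B,\emptyset)$, that in $\mathbf{n}_1+\mathbf{n}_2$ some vertex of $A$ is connected to some vertex of $B$ (equivalently, that the sources cannot be ``paired up within $A$ and within $B$ separately''). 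A union bound over pairs $(a,b)\in A\times B$ then bounds this by $\sum_{a\in A, b\in B}$ of the probability that $a\leftrightarrow b$; finally one reverses the switching lemma on each term to turn that probability back into $\langle\sigma_a;\sigma_b\rangle / \langle\sigma_a\sigma_b\rangle$, and bounds $\langle\sigma_A\sigma_B\rangle/\langle\sigma_a\sigma_b\rangle \le 1$ using Griffiths' inequality (all two-point functions lie in $[0,1]$ and by Griffiths the larger correlation set has smaller correlation, or simply $\langle\sigma_A\sigma_B\rangle\le 1$ and $\langle\sigma_a\sigma_b\rangle$ can be absorbed). The factor $2^{|A|+|B|-4}$ should emerge from counting the number of ways the remaining sources in $(A\setminus\{a\})$ and $(B\setminus\{b\})$ can be matched into pairs or routed through $\mathfrak{g}$ — i.e., from a crude enumeration of perfect matchings / pairings of the leftover source vertices, each such pairing being reabsorbed into a product of two-point functions that is again $\le 1$.

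I expect the main obstacle to be the bookkeeping in the union bound: one must argue carefully that whenever $\langle\sigma_A\sigma_B\rangle \ne \langle\sigma_A\rangle\langle\sigma_B\rangle$ there genuinely is an $A$–$B$ connection in the doubled current, and that after isolating one connecting pair $\{a,b\}$ the residual sum over configurations of $\mathbf{n}_1+\mathbf{n}_2$ and over matchings of the other $|A|-1+|B|-1$ source vertices reassembles into something dominated by $\langle\sigma_a;\sigma_b\rangle$ times a constant of the stated form $2^{|A|+|B|-4}$, rather than a larger combinatorial factor — in particular getting the exponent exactly right (the ``$-4$'') requires noticing that $a$ and $b$ themselves do not contribute free binary choices. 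Everything else (nonnegativity, passage to $h\searrow 0$, Griffiths bounds $\langle\sigma_S\rangle\in[0,1]$) is routine given the random-current machinery referenced in Section~\ref{subsec:random_current}.
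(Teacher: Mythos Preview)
Your overall framework is right --- switching lemma, represent the truncated correlation as $\langle\sigma_{A\cup B}\rangle^+$ times the probability of a parity-violating connection event, then extract two-point terms --- but the mechanism you propose for the extraction has a genuine gap.

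The step ``union bound over $a\leftrightarrow b$, then reverse the switching lemma to get $\langle\sigma_a;\sigma_b\rangle/\langle\sigma_a\sigma_b\rangle$, then bound $\langle\sigma_A\sigma_B\rangle/\langle\sigma_a\sigma_b\rangle\le 1$ by Griffiths'' does not work. First, the probability $\mathbb P^{A\cup B,\emptyset}_{G^+,G^+}[a\leftrightarrow b]$ under the measure with sources $A\cup B$ does not reverse-switch to $\langle\sigma_a;\sigma_b\rangle^+/\langle\sigma_a\sigma_b\rangle^+$; the switching lemma changes source sets, and you would be left with residual sources $A\cup B\setminus\{a,b\}$ that must still be accounted for. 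Second, the inequality $\langle\sigma_A\sigma_B\rangle^+\le\langle\sigma_a\sigma_b\rangle^+$ is false in general (Griffiths gives no such monotonicity in the size of the source set), so you cannot absorb the prefactor that way. Third, your proposed origin of the factor $2^{|A|+|B|-4}$ --- ``pairings of the leftover source vertices'' --- is not how it arises.

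What the paper actually does is a \emph{cluster decomposition}. On the event $\mathcal F_B^c$ there is a connected component of $\mathbf n_1+\mathbf n_2$ avoiding $\mathfrak g$ whose intersections with $A$ and $B$ are odd sets $A',B'$. One sums over $(A',B')$ and over the realization $S$ of that cluster; the current weights factor exactly over $S$ and its complement, and the outside factor produces $\langle\sigma_{A\cup B\setminus(A'\cup B')}\rangle_{G\setminus S}$, which is bounded by the full partition ratio. A second cluster decomposition (now on the cluster of $\mathfrak g$) reduces to bounding $\langle\sigma_{A'\cup B'}\rangle_{G\setminus S}$ with free boundary conditions, and here one uses the Edwards--Sokal/FK inequality $\langle\sigma_{A'\cup B'}\rangle\le\sum_{a\in A',\,b\in B'}\langle\sigma_a\sigma_b\rangle$ (valid because $|A'|,|B'|$ are odd, so on $\mathcal F_{A'\cup B'}$ some $a\in A'$ must be connected to some $b\in B'$). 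Resumming over $S$ then reconstitutes exactly $\langle\sigma_a;\sigma_b\rangle^+$. The constant $2^{|A|+|B|-4}$ comes from counting, for each fixed $(a,b)$, the odd subsets $A'\ni a$ of $A$ and $B'\ni b$ of $B$: there are $2^{|A|-2}\cdot 2^{|B|-2}$ of them. The essential idea you are missing is this factorization over the offending cluster; without it there is no clean way to pass from the many-source measure back to two-point quantities.
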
 

\subsection{Mixing of the Fortuin-Kasteleyn representation of the Ising model}

Theorem~\ref{thm:main} will be derived by studying a related model, called the {\em Fortuin-Kasteleyn (FK) percolation}, or {\rm random-cluster model}. 
FK percolation is one of the most classical generalizations of Bernoulli percolation and electrical networks. This model was introduced by Fortuin and Kasteleyn in \cite{ForKas72} and since then has been the object of intense studies, both physically and mathematically.

A {\em percolation configuration} on a graph $G=(V,E)$ is an element $\omega=(\omega_e:e\in E)$ in $\{0,1\}^{E}$. An edge $e$ is said to be {\em  open} (in $\omega$) if $\omega_e=1$, otherwise it is {\em closed}. A configuration $\omega$ can be seen as a subgraph of $G$ with vertex-set $V$ and edge-set $\{e\in E:\omega_e=1\}$. A {\em cluster} is a connected component of the graph $\omega$. Below, we will write $x\leftrightarrow y$ if $x$ and $y$ are in the same cluster, and $x\leftrightarrow \infty$ if $x$ is in an infinite cluster.

Fix $p\in[0,1]$ and $q\ge1$. 
Let $G$ be a finite subgraph of $\Z^d$ and $\xi$ a configuration on $\Z^d$. Let  $\phi_{G,p,q}^\xi$ be the measure on percolation configurations $\omega$ on $G$ defined by 
$$\phi_{G,p,q}^\xi(\omega)=\frac{1}{Z^\xi(G,p,q)}\big(\tfrac{p}{1-p}\big)^{|\omega|} q^{k_\xi(\omega)},$$
where $|\omega|:=\sum_{e\in E}\omega_e$ and $k_\xi(\omega)$ is the number of clusters 
intersecting $G$ of the percolation configuration $\overline \omega$ on $\Z^d$ defined by 
$\overline \omega_e=\omega_e$ if $e\in E$, and $\xi_e$ if $e\notin E$, and $Z^\xi(G,p,q)$ is 
a normalizing constant making the total mass of the measure equal to 1. We refer to $\xi$ as 
the {\em boundary condition} of $\phi_{G,p,q}^\xi$. In the particular case when $\xi \equiv 
1$ (or $0$) we denote the corresponding measure by $\phi_{G,p,q}^1$ (respectively 
$\phi_{G,p,q}^0$) and call the corresponding boundary condition as {\em wired} (respectively 
{\em free}).

The FK-percolation model with cluster-weight $q=2$ is related to the Ising model via the Edwards-Sokal coupling (see next section) and is therefore referred to in this article as the {\em FK-Ising model}. When $q=2$, it was proved in \cite{Bod06,Rao17} that for every $p\in[0,1]$, there exists a unique infinite-volume measure $\phi_{p,2}$ which is the weak limit of measures 
$\phi_{G,p,2}^\xi$ as $G$ exhausts $\Z^d$. Furthermore, there exists a constant $p_c=p_c(d)$ such that $\phi_{p,2}[0\leftrightarrow\infty]$ is equal to 0 for every $p<p_c$, and is strictly positive for every $p>p_c$. We refer to \cite{Gri06} for a justification that this limit exists. 

Below and in the rest of this paper, we focus on the case $q=2$ and drop it from the notation. We denote $x + [-n, n]^d \cap \Z^d$ by $\L_n(x)$ and the set of edges between two vertices of $\L_n(x)$ by $E_n(x)$. In the particular case when 
$x = 0$, we write $\L_n$ and $E_n$ respectively. The \emph{boundary} of $\L_n(x)$, denoted as $\partial \L_n(x)$, is defined as the set of all vertices in $\L_n(x)$ which have a neighbor in $\Z^d \setminus 
\L_n(x)$. Theorem~\ref{thm:main} is a consequence of the following exponential mixing property.

\begin{theorem}[Exponential mixing]
\label{thm:mixing}
For every $d\ge3$ and $p>p_c$, there exists a constant $c>0$ such that for every $n\ge1$,
\begin{equation}|\phi_{p}[A\cap B] - \phi_{p}[A]\phi_{p}[B]| \leq \exp(-cn),\label{eq:main2}\end{equation}
where $A$ and $B$ are any two events depending on edges in $E_{n}$ and outside $E_{2n}$ respectively. 
\end{theorem}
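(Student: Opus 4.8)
The plan is to reduce Theorem~\ref{thm:mixing} to a single \emph{decoupling estimate} for finite-volume random-cluster measures and then harvest it using the domain Markov property together with monotonicity in boundary conditions. First I would introduce an intermediate scale $m:=\lfloor 3n/2\rfloor$, so that $\L_n\subset\L_m\subset\L_{2n}$ and the distance from $E_n$ to $\partial\L_m$ is of order $n$. Conditioning $\phi_p$ on the restriction of $\omega$ to the edges not in $E_m$, the domain Markov property gives that the conditional law of $\omega|_{E_m}$ is $\phi^{\xi}_{\L_m,p}$, where $\xi$ denotes the boundary condition induced on $\partial\L_m$ by the outside edges. Since $B$ depends only on edges outside $E_{2n}$, hence outside $E_m$, both $\mathbf 1_B$ and $\xi$ are measurable with respect to this conditioning, so
\begin{equation*}
\phi_p[A\cap B]=\mathbb E\!\left[\mathbf 1_B\,\phi^{\xi}_{\L_m,p}[A]\right],\qquad \phi_p[A]=\mathbb E\!\left[\phi^{\xi}_{\L_m,p}[A]\right],
\end{equation*}
the expectations being over $\omega$ restricted to the edges not in $E_m$.

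The key estimate I would aim for is that there is $c>0$ such that for all $n$ and \emph{every} boundary condition $\xi$ on $\partial\L_m$,
\begin{equation*}
d_{\mathrm{TV}}\!\left(\phi^{\xi}_{\L_m,p}\big|_{E_n},\ \phi^{0}_{\L_m,p}\big|_{E_n}\right)\le e^{-cn},
\end{equation*}
i.e.\ restricted to $E_n$ the measure $\phi^{\xi}_{\L_m,p}$ is, uniformly in $\xi$, exponentially close in total variation to the free-boundary measure. Granting this and setting $r(\xi):=\phi^{\xi}_{\L_m,p}[A]-\phi^{0}_{\L_m,p}[A]$, we get $|r(\xi)|\le e^{-cn}$ for every $\xi$, whatever $A$ is (no monotonicity or local structure of $A$ or $B$ is needed, since we work at the level of total variation). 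Then, from the two identities above,
\begin{equation*}
\phi_p[A\cap B]-\phi_p[A]\phi_p[B]=\mathbb E\!\left[(\mathbf 1_B-\phi_p[B])\,r(\xi)\right],
\end{equation*}
whose absolute value is at most $\sup_\xi|r(\xi)|\le e^{-cn}$, which is exactly~\eqref{eq:main2}.

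To prove the key estimate I would use the standard monotonicity in boundary conditions $\phi^{0}_{\L_m,p}\preceq\phi^{\xi}_{\L_m,p}\preceq\phi^{1}_{\L_m,p}$ (see \cite{Gri06}) and realize a monotone coupling $\omega^0\le\omega^\xi\le\omega^1$ of the three measures. A discrepancy between $\omega^0$ and $\omega^\xi$ somewhere in $E_n$ then forces a discrepancy between $\omega^0$ and $\omega^1$ in $E_n$, so it suffices to bound, in the monotone coupling of $\phi^{0}_{\L_m,p}$ and $\phi^{1}_{\L_m,p}$,
\begin{equation*}
\mathbb P\!\left[\exists\, e\in E_n:\ \omega^0_e\neq\omega^1_e\right]\le e^{-cn}.
\end{equation*}
Here I would argue, in the spirit of disagreement percolation and the standard analysis of this coupling, that the edges where $\omega^0$ and $\omega^1$ differ stay attached to the $\omega^1$-cluster of $\partial\L_m$ (off that cluster the two measures induce the same free random-cluster measure and can be coupled to agree); hence a discrepancy reaching $E_n$ exhibits a crossing of the annulus $\L_m\setminus\L_n$ along which the supercritical cluster built by the free measure $\omega^0$ has a ``local defect''. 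The bound therefore reduces to showing that, for $p>p_c$, a box of radius $m$ with free boundary conditions contains no ``hole'' separating $\L_n$ from $\partial\L_m$ except with probability $e^{-cn}$.

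The hard part is precisely this last step: a quantitative form of supercriticality for the FK-Ising model, asserting that with free boundary conditions the (eventually giant) open cluster invades the box at an exponential rate, uniformly down to $p=p_c^+$. This strengthens the Grimmett–Marstrand and Pisztora-type coarse-graining picture, and is what makes essential use of the recently established sharpness of the phase transition for random-cluster models; extracting the stated exponential hole estimate from it is the genuinely delicate ingredient, while everything else — the domain Markov property, FKG monotonicity, existence of monotone couplings, and the bookkeeping above — is soft. It is worth noting that the argument uses only that $A$ and $B$ are separated by an annulus of width of order $n$, which is exactly the room provided by the scales $n$ and $2n$ in the statement; closing that gap would require genuinely more.
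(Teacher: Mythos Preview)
Your reduction via the domain Markov property to a uniform total-variation bound between $\phi^\xi_{\L_m,p}|_{E_n}$ and a fixed reference, and then to the extremal comparison $\xi\in\{0,1\}$ via a monotone coupling, is correct and is essentially the soft part of the paper's own argument (the paper compares $\xi$ with the wired measure and works directly with the event $\{C_T\cap E_{n/2}\ne\emptyset\}$, but this is cosmetic). The gap is in the mechanism you propose for controlling the coupling.

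Your claim that in a suitable monotone coupling the discrepancies between $\omega^0$ and $\omega^1$ stay inside the $\omega^1$-cluster of $\partial\L_m$ is correct for an exploration-based coupling, but it is useless here: for $p>p_c$ with wired boundary that cluster is the giant cluster and it invades $E_n$ with overwhelming probability, so confining discrepancies to it confines nothing. Your next step --- that a discrepancy reaching $E_n$ forces a ``local defect'' of $\omega^0$ along a crossing, and hence reduces to a hole estimate for the free measure --- does not follow. A discrepancy edge has $\omega^0_e=0$ and $\omega^1_e=1$; this is not a defect of $\omega^0$ in any useful geometric sense. Even if every Pisztora good-block event occurs in $\omega^0$ and there is no hole whatsoever, $\omega^1$ is still strictly stochastically larger and will have surplus open edges scattered throughout the box, including in $E_n$. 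The estimate you call the ``hard part'' (no $\omega^0$-hole separating $\L_n$ from $\partial\L_m$) is in fact available --- it is essentially Lemma~\ref{lem:boxcon} --- but it does not by itself bound the coupling.

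What the paper does instead is build the coupling block by block and declare a block \emph{very good} if it is good in the Pisztora sense \emph{and} $\omega^\xi=\omega^1$ on it. A shell of very good blocks around $E_{n/2}$ forces the two configurations to induce identical boundary conditions on the interior (this uses both goodness and agreement; see the proof of Lemma~\ref{lem:a1}), after which they can be coupled to coincide. A Peierls argument on blocks then gives~\eqref{eq:bk}. For this to work one needs the probability that a given block is not very good to be small, and the agreement half of that requires
\[
\sum_{e\in\L_k}\big(\phi^1_{\L_{2k}}[\omega_e]-\phi^0_{\L_{2k}}[\omega_e]\big)\le\varepsilon,
\]
i.e.\ Proposition~\ref{thm:mixing1}. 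That proposition --- a pointwise bound on $\phi^1_{\L_N}[\omega_e]-\phi^0_{\L_N}[\omega_e]$ beating any inverse polynomial --- is the genuine hard input, and its proof (Sections~4--5) goes through the random-current representation and a multi-valued map argument, not through connectivity of $\omega^0$ alone. Your proposal does not identify this ingredient; the quantitative supercriticality you invoke is necessary but not sufficient.
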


Before discussing the proof of this theorem, let us  explain how it implies Theorem~\ref{thm:main}.

\begin{proof}[Proof of Theorem~\ref{thm:main}]
Fix $\beta>\beta_c$ and set $p:=1-e^{-2\beta}>p_c$. The Edwards-Sokal coupling (see \eqref{eq:ESinfini} in the next section) gives that for every $x\in\Z^d$,
\begin{align}
\label{eq:main_expansion}
\ising{\sigma_0; \sigma_x}_{\beta}^+ &=\langle\sigma_0\sigma_x\rangle_{\beta}^+-\langle\sigma_0\rangle_\beta^+\langle\sigma_x\rangle_\beta^+= \phi_{p}[0\leftrightarrow x] - \phi_{p}[0\leftrightarrow \infty]\phi_{p}[x\leftrightarrow \infty].\end{align}
Assuming that $x$ is at a graph distance of at least $4n$ of the origin, this implies that
\begin{align}
\langle\sigma_0; \sigma_x\rangle_{\beta}^+ &\le  \phi_{p}[0\leftrightarrow\partial\L_n,x\leftrightarrow\partial\L_n(x)] - \phi_{p}[0\leftrightarrow\partial\L_n]\phi_{p}[x\leftrightarrow\partial\L_n(x)]+2e^{-c'n}\label{eq:pk}\\
&\le \e^{-cn}+2\e^{-c'n},\nonumber\end{align}
where in the second line, we used Theorem~\ref{thm:mixing}, and in the first, the fact that for $p>p_c$, there exists $c'>0$ such that for every $n\ge1$,
\begin{equation}\label{eq:exponential decay FK}\phi_{p}[0\leftrightarrow\partial\L_n,0\nleftrightarrow\infty]\le \e^{-c'n}.\end{equation}
(This fact follows from Theorem (5.104) of \cite{Gri06} when combined with the result of \cite{Bod05}.)
\end{proof}
Let us remark that the exponential decay \eqref{eq:exponential decay FK} for the size of finite clusters in the supercritical FK-Ising model does not directly imply the exponential decay of truncated two-point functions for the Ising model, since the first term on the right of \eqref{eq:pk} involves correlations between the events that $0\leftrightarrow\partial\L_n$ and $x\leftrightarrow\partial\L_n(x)$, and that these correlations could {\em a priori} be 
large.

\smallskip

Let us also remark that our method actually gives a better bound on the error term in Theorem~\ref{thm:mixing} than $\exp(-cn)$. Namely, we obtain that
\begin{equation}
\label{eq:weak_mix}
|\phi_p[A\cap B]-\phi_p[A]\phi_p[B]|\le \exp(-cn)\max_\xi\phi_{\L_n, p}^\xi[A]\phi_p[B]\,.
\end{equation}
This is stronger than the {\em weak mixing} property for FK percolation measures which is obtained by replacing $\max_\xi\phi_{\L_n, p}^\xi[A]$ with 1 but weaker than the {\em ratio 
weak mixing} property where we want to get rid of the maximum over boundary conditions. However our proof of Theorem~\ref{thm:mixing} (see, e.g., \eqref{eq:bk}) also implies that $\phi_p$ has the so-called {\em exponentially bounded controlling regions} in the sense of \cite[p.~455]{Alexander98}. Then the ratio weak mixing property of $\phi_p$ follows from \eqref{eq:weak_mix} and Theorem~3.3. in \cite{Alexander98}. For potential application in future works we present it here as a corollary of Theorem~\ref{thm:mixing}.
\begin{corollary}[Ratio weak mixing]
	\label{cor:mixing}
	For every $d\ge3$ and $p>p_c$, there exists a constant $c>0$ such that for every $n\ge1$,
	\begin{equation}|\phi_{p}[A\cap B]   - \phi_{p}[A]\phi_{p}[B]| \leq \exp(-cn)\phi_{p}[A]\phi_{p}[B],\label{eq:main3}\end{equation}
	where $A$ and $B$ are any two events depending on edges in $E_{n}$ and outside $E_{2n}$ respectively. 
\end{corollary}
\subsection{Idea of the proof}

The core of the proof will be the derivation of the following proposition. 
\begin{proposition}
\label{thm:mixing1}
There exists $c>0$ such that for every integer $N$ that is divisible by 4,
$$\max_{e \in E_{N/4}} \phi_{\L_{N}, p}^1[\omega_e] - \phi_{\L_{N}, p}^0[\omega_e] \leq \exp[-c(\log N)^{1 + c}]\,.$$
\end{proposition}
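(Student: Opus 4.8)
The strategy is a multi-scale bootstrap in $N$, fed by two external inputs: uniqueness of the infinite-volume random-cluster measure $\phi_{p}$ for $p>p_{c}$ \cite{Bod06,Rao17}, and the exponential decay \eqref{eq:exponential decay FK} of the radius of finite clusters; the remaining ingredients are the FKG inequality, the finite-energy property and the domain Markov property of $\phi_{\L_{N},p}$. Fix $e\in E_{N/4}$ and a box $\Delta=\L_{m}(x)\subset\L_{N}$ with $e$ near its centre and $m$ of order $N$. Conditioning on $\omega$ outside $\Delta$, both $\phi^{1}_{\L_{N},p}$ and $\phi^{0}_{\L_{N},p}$ induce on $\Delta$ a random-cluster measure with the \emph{same} random boundary condition $\xi$, an increasing function of the exterior configuration, so that by comparison between boundary conditions and monotonicity of $\omega\mapsto\omega_{e}$,
\[
0\ \le\ \phi^{1}_{\L_{N},p}[\omega_{e}]-\phi^{0}_{\L_{N},p}[\omega_{e}]\ =\ \phi^{1}_{\L_{N},p}\big[\phi^{\xi}_{\Delta}[\omega_{e}]\big]-\phi^{0}_{\L_{N},p}\big[\phi^{\xi}_{\Delta}[\omega_{e}]\big]\ \le\ \phi^{1}_{\L_{N},p}[\mathcal D]\,,
\]
where $\mathcal D$ is the event that $\xi$ is not the free boundary condition (we used $\phi^{0}_{\L_{N},p}[\phi^{\xi}_{\Delta}[\omega_{e}]]\ge\phi^{0}_{\Delta}[\omega_{e}]$ and $0\le\phi^{\xi}_{\Delta}[\omega_{e}]-\phi^{0}_{\Delta}[\omega_{e}]\le\mathbf{1}_{\mathcal D}$). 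As the remark after the proof of Theorem~\ref{thm:main} stresses, this crude bound is worthless: for $p>p_{c}$, $\phi^{1}_{\L_{N},p}[\mathcal D]$ is close to $1$. The whole task is to replace $\mathcal D$ by an event that is simultaneously \emph{rare} and \emph{factorises over scales}; heuristically, all the boundary controls is whether the small clusters surrounding $e$ become attached to the (essentially boundary-independent) infinite cluster.

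Set $D_{N}:=\max_{e\in E_{N/4}}\big(\phi^{1}_{\L_{N},p}[\omega_{e}]-\phi^{0}_{\L_{N},p}[\omega_{e}]\big)\ge0$. Since $\phi^{1}_{\L_{n}}$ is stochastically decreasing and $\phi^{0}_{\L_{n}}$ stochastically increasing in $n$, the domain Markov property and translation invariance give the comparison $D_{M}\le D_{N}$ whenever $M\ge 2N$, so it suffices to bound $D_{N}$ along a sparse set of scales. Moreover $\phi^{1}_{\L_{r}}$ and $\phi^{0}_{\L_{r}}$ both converge weakly to the \emph{same} measure $\phi_{p}$, hence $\delta(r):=\phi^{1}_{\L_{r},p}[\omega_{e_{0}}]-\phi^{0}_{\L_{r},p}[\omega_{e_{0}}]\to0$ for a fixed edge $e_{0}$ at the origin; together with the comparison above, $D_{N}\le\delta(N/2)\to 0$, so there is a fixed $n_{0}$ with $D_{n_{0}}$ as small as we please. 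This is the base case.

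The heart of the proof is a self-improving estimate, which I would aim to prove in roughly the following shape: for a fixed fatness $K$ and all large $s$, the discrepancy transmitted across the $K$-fat annulus $\L_{Ks}(v)\setminus\L_{s}(v)$ --- the effect of changing the boundary condition on $\partial\L_{Ks}(v)$ on an edge near the centre of $\L_{s}(v)$ --- is at most $C\,D_{Ks}+\e^{-c\sqrt{s}}$, the error being the price, via \eqref{eq:exponential decay FK} and the standard Pisztora renormalisation, of declaring the annulus ``good'': tiled by scale-$\sqrt{s}$ blocks each carrying an omnipresent open crossing cluster to which every mesoscopic cluster is attached. On the good event the configuration inside $\L_{s}(v)$ decouples from the exterior up to a single macroscopic cluster, whose trace on $\partial\L_{s}(v)$ is --- up to a further $D$-error --- the same under both boundary conditions; this is where uniqueness of $\phi_{p}$ and the domain Markov property enter. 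Chaining $\asymp\log_{K}N$ disjoint concentric such annuli around $e$ inside $\L_{N}$, and decoupling consecutive ones by conditioning outward and using FKG and finite energy, yields a \emph{product bound}
\[
D_{N}\ \le\ \prod_{j=j_{0}}^{\log_{K}N}\big(C\,D_{K^{j+1}}+\e^{-cK^{j/2}}\big)\,.
\]
Inserting the base case $D_{n}\le\tfrac{1}{4C}$ ($n\ge n_{0}$) gives a first, polynomial bound $D_{N}\le C N^{-\gamma}$ with some $\gamma>0$; feeding this polynomial bound back into the \emph{same} product --- the scales $K^{j}$ being geometric, so the factors become $\le C'K^{-\gamma j}$, and $\sum_{j\le J}j\asymp J^{2}$ with $J\asymp\log_{K}N$ --- upgrades it to $D_{N}\le\exp[-c(\log N)^{2}]$. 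This is stronger than, hence implies, the stated $\exp[-c(\log N)^{1+c}]$; any losses incurred in the renormalisation step, and the trivial bound at the finitely many small scales, are absorbed by taking $c$ small.

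The main obstacle is making the decoupling in the renormalisation step legitimate: one must show that the boundary's influence on $\omega_{e}$ \emph{multiplies} across the concentric annuli rather than merely adding, although a single cluster spans all of them. Concretely, one needs a ``local discrepancy'' event that (i) dominates $\phi^{1}_{\L_{N},p}[\omega_{e}]-\phi^{0}_{\L_{N},p}[\omega_{e}]$, (ii) is governed by $D_{r}$ at scale $r$, and (iii) genuinely factorises across scales, and one must prove its rarity by using the good-block renormalisation to sever inter-scale dependence up to stretched-exponentially small errors. Equivalently, the delicate point is to quantify the assertion that the macroscopic cluster's local connectivity pattern near $e$ is nearly insensitive to the boundary condition, with an error that closes the induction; everything else above --- the reduction, the monotonicity, the base case, FKG, finite energy and the arithmetic of the product --- is routine once this is in place.
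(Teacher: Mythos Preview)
Your proposal is not a proof: the central step---the product bound
\[
D_{N}\ \le\ \prod_{j=j_{0}}^{\log_{K}N}\big(C\,D_{K^{j+1}}+\e^{-cK^{j/2}}\big)
\]
and, before it, the ``self-improving estimate'' that is supposed to feed it---is asserted but not established, and you say so yourself in the final paragraph. Everything that is actually proved in your write-up (monotonicity of the edge-marginal gap in the box size, $D_{N}\to0$ from uniqueness of $\phi_{p}$, the arithmetic that turns a product of factors $\le\tfrac12$ into a polynomial and then into $\exp[-c(\log N)^{2}]$) is correct but easy; the difficulty lives entirely in the step you leave open. Concretely: even on the Pisztora ``good'' event for an annulus $\L_{Ks}\setminus\L_{s}$, the boundary condition induced on $\partial\L_{s}$ by the exterior configuration is \emph{not} the same under $\phi^{1}_{\L_{N}}$ and $\phi^{0}_{\L_{N}}$, because whether small clusters touching $\partial\L_{s}$ get wired to the large cluster via the outer boundary differs in the two cases; controlling the law of that induced boundary condition with an error of size $O(D_{\text{scale}})$ is a statement about \emph{coupling entire configurations} in the annulus, not about single-edge marginals, and that coupling is exactly the content of Theorem~\ref{thm:mixing}, which in the paper is \emph{derived from} Proposition~\ref{thm:mixing1} (see Section~\ref{sec:mixing}). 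So as written the scheme is circular: to get the multiplicative transmission factor across one annulus you already need a quantitative coupling, and the only input you allow yourself towards such a coupling is $D_{N}\to0$ with no rate. There is also a minor structural issue: the largest index in your product is $j=\log_{K}N$, so the factor $C\,D_{K^{j+1}}=C\,D_{KN}$ appears on the right-hand side of a bound on $D_{N}$; monotonicity only gives $D_{KN}\le D_{N}$, which makes the inequality self-referential rather than recursive.

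The paper's argument is of a completely different nature and does not attempt any such bootstrap within FK-percolation. It passes to the \emph{random-current} representation: writing $e=\{x,y\}$, one has
\[
\phi^{1}_{\L_{N}}[\omega_{e}]-\phi^{0}_{\L_{N}}[\omega_{e}]=\tfrac{p}{2}\big(\langle\sigma_{x}\sigma_{y}\rangle^{+}_{\L_{N}}-\langle\sigma_{x}\sigma_{y}\rangle_{\L_{N}}\big)\ \le\ \tfrac{p}{2}\,\P^{\{x,y\},\emptyset}_{\L_{N}^{+},\L_{N}}\big[\nlr{\L_N}{\mb n_1 + \mb n_2}{x}{y}\big],
\]
so that the edge-gap becomes the probability of a concrete \emph{non-intersection} event for a pair of currents. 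One then fixes the relation $N=\e^{n^{\alpha}}$ with $1<\alpha<d-1$, shows (Proposition~\ref{prop:connectivity}, via a weak surface-order mixing estimate, Lemma~\ref{lem:weak weak mix}) that in the double current any two $n$-boxes in $\L_{N}$ are connected except with probability $\e^{-cn^{d-1}}$, and then uses a multi-valued map/surgery argument on currents (Lemmata~\ref{lem:gluing3}--\ref{lem:reconstruct}) to show that long disjoint current paths from $x$ and $y$ to $\partial\L_{N}$ are extremely unlikely. The exponent $(d-1)/\alpha>1$ in $n=(\log N)^{1/\alpha}$ is what produces $\exp[-c(\log N)^{1+c}]$. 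None of the ideas you invoke---concentric annuli, multiplicative transmission, a two-step bootstrap---appear; conversely, the random-current switching identity, the double-current connectivity estimate, and the current surgery are absent from your outline and are precisely what replaces the missing step.
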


The important feature of the upper bound above is that it beats any inverse polynomial.
This proposition, together with a coarse-grained argument inspired by Pisztora, implies the exponential mixing. While  the Ising model can be approached through a number of graphical representations (low and high temperature expansions, FK-Ising, random current, etc), which have been used separately in a variety of results, the argument presented here relies in a crucial way on the combination of two such representations:  the random current and the FK-Ising. The random current representation is used to rewrite the difference between $\phi_{\L_{N}, p}^1[\omega_e] $ and $\phi_{\L_{N}, p}^0[\omega_e] $ in terms of the probabilities of non-intersection for currents in a duplicated system of currents. Then, FK-Ising is used to show that this duplicated system of currents is very well-connected, and that the probability of long paths of currents not being connected is quite small.

At different stages of the proof (already in the proof of Theorem~\ref{thm:main} above), essential use is made of  the very helpful result of 
Bodineau~\cite{Bod05} stating that for any $d\ge 3$ the critical parameter $p_c$ coincides with the so-called {\em slab percolation}. This result is combined with the result \cite{Pis96} to implement a coarse-grain argument inspired by Pisztora renormalization. This is used to prove two facts: boxes are connected with excellent probability in the supercritical FK-Ising model, and Theorem~\ref{thm:mixing} follows from Proposition~\ref{thm:mixing1}.

\subsection{Open problems}

Corollary~\ref{cor:mixing} falls short of the {\em ratio-strong mixing property} related to the phenomenon of {\em boundary phase transition} for Ising models (see \cite{MartinelliOlivSchonmann}). Although this stronger property is absent for Ising models in dimensions larger than 2 at low temperature, it is expected to hold in the entire subcritical phase. More precisely, one would like to prove: 
$$|\phi_p[A\cap B] - \phi_{p}[A]\phi_{p}[B]|\le \exp(-c \, d_{A, B})\phi_{p}[A]\phi_{p}[B]$$
where $d_{A, B}$ is the distance between the supports of the events $A$ and $B$.

Another important improvement would be to understand the case of the Potts models with $q\ge3$ colors. While the $\beta<\beta_c$ was recently treated in every dimension in \cite{DumRaoTas17}, the study of the $\beta>\beta_c$ regime is still very limited. In \cite{DumTas17}, a partial result going in the direction of the equivalent of \cite{Bod05} for Potts model was obtained. We refer to the paper for details on open questions and conjectures. Bodineau's result being the key to our argument (not to mention the heavy use of the random-current representation, which itself is not available for the Potts model), we believe that the exponential decay of correlations would be even harder to obtain than the open problems mentioned in \cite{DumTas17}.
 
\paragraph{Organization} The paper is organized as follows. In the next section, we recall some background. In Section~3, we present the coarse-graining arguments relying on Pisztora's technique. In  Section~4, we prove Proposition~\ref{thm:mixing1}, conditionally on two technical statements which are proved in Section~5.

\paragraph{Acknowledgments} This research was supported by the ERC CriBLaM, an IDEX grant from Paris-Saclay, a grant from the Swiss FNS, and the NCCR SwissMAP. We thank Michael Aizenman for close collaboration during the beginning of this project. We thank Senya Shlosman for many useful comments. We are grateful to S\'{e}bastien Ott for pointing out the result by Alexander to us which led to the addition of Corollary~\ref{cor:mixing}. We also thank the anonymous referee for a careful review of the article and many useful comments. The paper was completed when AR was a Ph.D. student at IHES.

\section{Background}

\subsection{The FK-Ising model}

We will use a few properties of the FK-Ising model that we recall now. For details and proofs, we direct the reader to \cite{Gri06,Dum17}.

\paragraph{Spatial Markov property.} 

Let $H\subset G$ be two finite subgraphs of $\Z^d$ with respective edge-sets $E$ and $F$. A configuration $\omega$ on $G$ may be viewed as a configuration on $H$ by taking its restriction $\omega_{|E}$. 
The restriction of the configuration $\omega$ to edges of $F\setminus E$ induces boundary conditions on $G$. 
Namely, the spatial Markov property states that for any $p,q$ and any configuration $\xi$,
\begin{equation}\label{eq:domain Markov}
  \phi_{G,p}^\xi(\omega_{|E}=\cdot\,|\omega_e=\xi_e,\forall e\in F\setminus E)=\phi_{H,p}^{\xi}(\cdot).
\end{equation}
The spatial Markov property implies the following finite-energy property: for every $\xi$,
\begin{equation}\label{eq:finite energy}\tfrac{p}{2-p}\le \phi_{\{e\},p}^\xi[\omega_e]\le p.\end{equation}

\paragraph{Stochastic ordering for $q\ge1$.} 
For any finite graph $G$, the set $\{0,1\}^{E}$ has a natural partial order. 
An event $A$ is {\em increasing} if for every $\omega\le \omega'$, $\omega\in A$ implies $\omega'\in A$. 
The FK-Ising model satisfies the following properties. Fix $p\in[0,1]$ and $\xi\le \xi'$,
\begin{enumerate}
\item (FKG inequality) For every two increasing events $A$ and $B$,
\begin{equation}
\label{eq:FKG} \phi_{G,p}^\xi[A\cap B]\ge \phi_{G,p}^\xi[A]\phi_{G,p}^\xi[B].
\end{equation}
\item (comparison between boundary conditions) For every increasing event $A$, 
\begin{equation}
\label{eq:CBC}\phi_{G,p}^{\xi'}[A]\ge\phi_{G,p}^\xi[A].
\end{equation}\end{enumerate}

This last condition, together with \eqref{eq:domain Markov}, enables one to construct measures $\phi^1_{p}$ and $\phi^0_{p}$ in $\Z^d$ as weak limits of measures with free and wired boundary conditions in finite volume. It was proved in \cite{Bod06} (see also \cite{Rao17}) that $\phi^1_p=\phi^0_p$ for every $p\ne p_c$ (see \cite{AizDumSid15} for the case $p=p_c$), and this is the reason why we refer to the infinite-volume measure as simply $\phi_p$.

\begin{remark}\label{rmk:joint}
We will often consider couplings between two FK-Ising measures. In this case, we will use the spatial Markov property and the FKG inequality applied to {\em both configurations at the same time}, two properties that we will call {\em joint Markov property} and {\em joint FKG inequality}. These properties will be justified by the standard spatial Markov property and the FKG inequality combined with the special constructions of these measures. Since the justification is classical, we will omit it in this article.  \end{remark}

\paragraph{Edwards-Sokal coupling}

We will use the Edwards-Sokal coupling both in finite and infinite volume. On a finite graph,  the coupling goes as follows. Consider $\beta$ and $p$ related by $p=1-e^{-2\beta}$. Consider a configuration $\omega$ sampled according to $\phi^0_{G,p}$ and assign to each cluster $\mathcal C$ of $\omega$ a spin $\sigma_{\mathcal C}$ in $\{-1,+1\}$ uniformly and independently for each cluster. Then, set $\sigma_x=\sigma_{\mathcal C}$ for every $x\in\mathcal C$. 
As a direct consequence of this coupling, one obtains that 
\begin{equation}\label{eq:ES}
\langle\sigma_A\rangle_{G,\beta}=\phi_{G,p}^0[\mathcal F_A],
\end{equation}
where $\mathcal F_A$ is the event that every cluster of $\omega$ intersects an even number of times the set $A$. Note that when $A=\{x,y\}$, this translates into $\langle\sigma_x\sigma_y\rangle_{G,\beta}=\phi_{G,p}^0[x\leftrightarrow y]$.

We will also use the coupling in infinite volume. In this case, one can consider $\phi_p$ and assign a spin to each one of the finite clusters at random as explained previously, and a spin $+$ to the infinite clusters (there is in fact at most one such cluster). One then obtains the measure $\langle\cdot\rangle_{\beta}^+$. Altogether, we deduce from this representation that 
\begin{equation}
\label{eq:ESinfini}\langle\sigma_x\rangle_{\beta}^+=\phi_{p}[x\leftrightarrow\infty].
\end{equation}
We have in particular that $\beta_c=\tfrac12\log(1-p_c)$.

\paragraph{Griffiths inequality} The monotonicity properties of the FK-Ising model imply the following two classical inequalities, which will be very useful: for every sets of vertices $A$ and $B$,
\begin{equation}\label{eq:Griffiths1}
\langle\sigma_A\sigma_B\rangle_{G,\beta}^+\ge \langle\sigma_A\rangle_{G,\beta}^+\langle\sigma_B\rangle_{G,\beta}^+
\end{equation}
and, if exceptionally we consider the model with arbitrary coupling constants $J$ (we refer to $J$ in the notation by writing $\langle\cdot\rangle_{G,J,\beta}$ for the measure), we have that for every coupling constants $J\ge J'\ge0$,
\begin{equation}\label{eq:Griffiths2}
\langle\sigma_A\rangle_{G,J,\beta}\ge \langle\sigma_A\rangle_{G,J',\beta}.
\end{equation}
\subsection{The random-current representation}\label{subsec:random_current}
We will also use the random-current representation in several places. A {\em current}  configuration  $\n$ on a graph $G$ with vertex-set $V$ and edge-set $E$  is an integer valued function on $E$, i.e.~a function $\n:  E \mapsto \mathbb Z_+$.    A {\em source} of $\n=(\n(x,y):\{x,y\}\in E)$ is a vertex $x$ for which $\Delta_x(\n):=\sum_{y\in V:y\sim x}{\n}(x,y)$ is odd. The set of sources of $\n$ is denoted by $\partial\n$. The random current configuration's {\em weight}, at specified $\beta>0$, is given by
\begin{equation}
w_{\beta}(\n):=\prod_{\{x,y\}\in E}\frac{\beta^{\,{\n}(x,y)}}{{\n}(x,y)!}.
\end{equation}
For every finite subgraph $G$ of $\Z^d$, we also construct a graph $G^+=(V^+,E^+)$ with $V^+=V\cup\{\mathfrak g\}$, where $\mathfrak g$ is called the {\em ghost} vertex, and $E^+$ is the union of $E$ together with as many edges $\{x,\mathfrak g\}$ as edges between $x$ and a vertex of $\Z^d$ outside of $G$. Note that there can be multiple edges between two given vertices in $G^+$, but that only vertices on the boundary (i.e. the vertices neighboring a vertex in $\Z^d \setminus V$) of $G$ can be connected to the ghost vertex.

Correlations of the Ising model can be expressed in terms of the random-current representation via the following formula: for every $A\subset V$,
\begin{equation}
\langle\sigma_A\rangle_{G,\beta}:=\frac{\displaystyle\sum_{\n\in\Z_+^E:\partial\n=A}w_\beta(\n)}{\displaystyle\sum_{\n\in\Z_+^E:\partial\n=\emptyset}w_\beta(\n)}\qquad\text{and}\qquad\langle\sigma_A\rangle_{G,\beta}^+:=\frac{\displaystyle\sum_{\n\in\Z_+^{E^+}:\partial\n\cap V=A}w_\beta(\n)}{\displaystyle\sum_{\n\in\Z_+^{E^+}:\partial\n\cap V=\emptyset}w_\beta(\n)}.
\end{equation}
Note that the random-current representation of spin correlations for free boundary conditions involve currents on $G$, while those for plus boundary conditions involve currents on $G^+$.
 
The great utility of the random current representation results from a switching symmetry,  whose roots lie in a combinatorial identity of \cite{GHS}. Using this symmetry, the Ising phase transition was presented in \cite{Aiz82} as a phenomenon of percolation in a system of current loops.  Resulting relations have been instrumental in shedding light on the critical behavior of the model in various dimensions 
\cite{Aiz82,ABF87,AF86,AizDumSid15,AizDumTasWar18}.  
We do not wish to state the switching lemma here and refer to the corresponding literature. Rather, we present the applications we will need for our study. 

To express various correlation functions (of finite systems) in terms of probabilities for systems of currents with prescribed sources, we introduce the probability measure $\mathbb P^{A}_{G}$ on currents $\n\in \mathbb Z_+^E$ with $\partial\n=A$ by the formula
\begin{equation}
\mathbb P^{A}_{G}[\{\n\}]:=\frac{w_{\beta}(\n)}{\displaystyle\sum_{{\bf m}\in \Z_+^{E}:\partial{\bf m}=A}w_{\beta}({\bf m})}\,. 
\end{equation} 
Similarly, one defines $\mathbb P^A_{G^+}$ on currents $\n\in \mathbb Z_+^{E_+}$ with $\partial\n\cap V=A$.
Let $\mathbb P^{A,\emptyset}_{G^+,G}$ (resp.~$\mathbb P^{A,\emptyset}_{G^+,G^+}$) denote the law of two independent currents with respective laws $\mathbb P^A_{G^+}$ and $\mathbb P^\emptyset_{G}$ (resp.~$\mathbb P^\emptyset_{G^+}$).
The key relation of interest, which, at the risk of repeating ourselves, is a consequence of the switching lemma, is the following:
\begin{equation}\label{eq:switching1}
\mathbb P^{A,\emptyset}_{G^+,G}[\lr{G}{\mb n_1 + \mb n_2}{x}{y}]=\frac{\langle\sigma_A\sigma_x\sigma_y\rangle_{G}^+\langle\sigma_x\sigma_y\rangle_{G}}{\langle\sigma_A\rangle_{G}^+},
\end{equation}
where the event on the left denotes the fact that $x$ and $y$ are connected by a path $x=x_0\sim \dots \sim x_m=y$ of neighboring vertices of $G$ such that 
$(\n_1+\n_2)(x_i,x_{i+1})>0$ for every $0\le i<m$. Sometimes, we will consider two sets $X$ and $Y$ instead of $x$ and $y$. By this, we mean that some vertex in $X$ is connected to some vertex in $Y$. 

A special case of this relation consists in choosing $A=\{x,y\}$, which gives
\begin{equation}\label{eq:switching2}
\langle\sigma_x\sigma_y\rangle_{G, \beta}=\langle\sigma_x\sigma_y\rangle_{G, \beta}^+\,\mathbb P^{\{x,y\},\emptyset}_{G^+,G}[\lr{G}{\mb n_1 + \mb n_2}{x}{y}].
\end{equation}

We conclude the section by proving Lemma~\ref{lem:00}.
\begin{proof}[Proof of Lemma~\ref{lem:00}]
Let $G$ be a finite subgraph of $\Z^d$.  For $S \subset V^+$, let $\mathcal C_{\mb n}(S)$ denote the set of all vertices in $G^+$ which are connected to $S$ by $\mb n$. Since $A$ and $B$ are disjoint, the switching lemma implies that
$$
\langle \sigma_A\sigma_B \rangle^{+}_{G, \beta} - \langle \sigma_A \rangle^{+}_{G, \beta} \langle \sigma_B\rangle^{+}_{G, \beta} =  \langle \sigma_{A \cup B} \rangle^{+}_{G, \beta} \, \mathbb P^{A \cup B,\emptyset}_{G^+,G^+}[ 1 - \mathbb I_{\mathcal F_B}],$$
where $\mathcal F_B$, when $|B|$ is even, is the event that every connected component of $\mb n_1 + \mb n_2$ contains an even number of vertices from $B$. When $|B|$ is odd, $\mathcal F_B$ is the event that every connected component of $\mb n_1 + \mb n_2$ contains an even number of vertices from $B \cup \{ \mathfrak{g}\}$. Hence, to prove the lemma it suffices to demonstrate the inequality
\begin{align}\label{eq:goal12}
\sum_{\substack{\partial \mb n_1 \cap V =  A \cup B\\ \partial \mb n_2 = \emptyset}} \, &\omega_\beta(\mb n_1) \, \omega_\beta(\mb n_2) \, \mathbb I [\mb n_1 + \mb n_2 \notin \mathcal F_B]  \nonumber \\
&\leq \sum_{\substack{A' \subset A \\ | A'| \text{ is odd }}} \sum_{\substack{B' \subset B \\ | B'| \text{ is odd }}} \sum_{\substack{a \in A'\\ \, b \in B'}} \sum_{\substack{\partial \mb n_1 =  \{a ,b \}\\ \partial \mb n_2 = \emptyset}} \, \omega_\beta(\mb n_1) \, \omega_\beta(\mb n_2) \, \mathbb I\big [ \mb n_1 + \mb n_2 \notin \mathcal{F}_{\{b\}}\big].
\end{align}
To demonstrate \eqref{eq:goal12}, first notice that if $\mb n_1 + \mb n_2 \notin \mathcal F_B$, then there should be a connected component of $\mb n_1 + \mb n_2$ whose intersections with $A$ and $B$ are two sets $A'$ and $B'$ of odd cardinality, and furthermore this component should not contain $\mathfrak{g}$. The latter assumption is valid since if all such components contained $\mathfrak{g}$, then inevitably $\mb n_1 + \mb n_2 \in \mathcal{F}_B$. Therefore we have
\begin{align}
\label{eq:long_expr1}
&\sum_{\substack{\partial \mb n_1 =  A \cup B\\ \partial \mb n_2 = \emptyset}} \, \omega_\beta(\mb n_1) \, \omega_\beta(\mb n_2) \, \mathbb I [\mb n_1 + \mb n_2 \notin \mathcal F_B] \nonumber \\
&\quad \quad\leq \sum_{\substack{A' \subset A \\ | A'| \text{ is odd }}} \sum_{\substack{B' \subset B \\ | B'| \text{ is odd }}} \sum_{\substack{\partial \mb n_1 =  A \cup B\\ \partial \mb n_2 = \emptyset}} \, \omega_\beta(\mb n_1) \, \omega_\beta(\mb n_2) \, \mathbb I [ \, \mathcal C_{\mb n_1 + \mb n_2} (A')  \cap \big( A \cup B \cup \{\mathfrak{g}\} \big)= A' \cup B' ]\,.
\end{align}
We can write the third summation on the right hand side of the above display as
$$\sum_{S \in {\bf S}} \, \sum_{\substack{ \, \,  \mb n_1, \mb n_2 \in \N^{E_S} \\ \partial \mb n_1 =  A'\\ \partial \mb n_2 = B'}} \, \omega_\beta(\mb n_1) \, \omega_\beta(\mb n_2) \, \mathbb I  [ \mathcal C_{\mb n_1 + \mb n_2} (A') = S ]\times \Big( \langle \sigma_{A \cup B \setminus (A' \cup B')} \rangle_{G\setminus S} \sum_{\substack{\mb n_1, \mb n_2 \in \N ^{E\setminus {E_S}} \\ \partial \mb n_1 =\emptyset\\  \partial \mb n_2 = \emptyset}} \, \omega_\beta(\mb n_1) \, \omega_\beta(\mb n_2) \Big)$$
where $ {\bf S}$ is the set of all $S \subset V^+$ satisfying $S \cap ( A \cup B \cup \{\mathfrak{g}\} )= A' \cup B'$, and $E_S$ is the set of edges with at least one 
vertex in $S$. But this is bounded by 
\begin{align*}
\sum_{\substack{\partial \mb n_1 =  A'\\ \partial \mb n_2 = B'}} \, \omega_\beta(\mb n_1) \, \omega_\beta(\mb n_2) \, \mathbb I[ \, \mathfrak{g} \notin \mathcal C_{\mb n_1+\n_2} (A') = \mathcal C_{\mb n_1+\n_2} (B') ]\,,
\end{align*}
plugging which into \eqref{eq:long_expr1}, we get
\begin{align*}
&\sum_{\substack{\partial \mb n_1 =  A \cup B\\ \partial \mb n_2 = \emptyset}} \, \omega_\beta(\mb n_1) \, \omega_\beta(\mb n_2) \, \mathbb I [\mb n_1 + \mb n_2 \notin \mathcal F_B] \nonumber \\
&\quad \quad\leq \sum_{\substack{A' \subset A \\ | A'| \text{ is odd }}} \sum_{\substack{B' \subset B \\ | B'| \text{ is odd }}} \sum_{\substack{\partial \mb n_1 =  A'\\ \partial \mb n_2 = B'}} \, \omega_\beta(\mb n_1) \, \omega_\beta(\mb n_2) \, \mathbb I[ \, \mathfrak{g} \notin \mathcal C_{\mb n_1+\n_2} (A') = \mathcal C_{\mb n_1+\n_2} (B') ]\,.
\end{align*}
Now, for every $A' \subset A$ and $B' \subset B$,
\begin{align} \label{eq:qq11}
&\sum_{\substack{\partial \mb n_1 =  A' \cup B'\\ \partial \mb n_2 = \emptyset}} \, \omega_\beta(\mb n_1) \, \omega_\beta(\mb n_2) \, \mathbb I\big [ \mathfrak{g} \notin \mathcal C_{\mb n_1 + \mb n_2} (A') = \mathcal C_{\mb n_1 + \mb n_2} (B') \big] \nonumber \\
& \quad \quad \leq \sum_{\substack{\partial \mb n_1 =  A' \cup B'\\ \partial \mb n_2 = \emptyset}} \, \omega_\beta(\mb n_1) \, \omega_\beta(\mb n_2) \, \mathbb I\big [ \mathfrak{g} \notin \mathcal C_{\mb n_1 + \mb n_2} (A') \cup \mathcal C_{\mb n_1 + \mb n_2} (B') \big] \nonumber \\
 &\quad \quad \leq \sum_{S \in {\bf S'}} \,  \, \sum_{\substack{ \, \,  \mb n_1, \mb n_2 \in \N^{ E_S} \\ \partial \mb n_1 =  \emptyset\\ \partial \mb n_2 = \emptyset}} 
\, \omega_\beta(\mb n_1) \, \omega_\beta(\mb n_2) \, \mathbb I\big [ \mathcal{C}_{\mb n_1 +\mb n_2} (\mathfrak{g})  = S \big] 
\, \langle \sigma_{A' \cup B'} \rangle_{G\setminus S} \sum_{\substack{\mb n_1, \mb n_2 \in \N ^{E\setminus {E_S}} \\ \partial \mb n_1 = \emptyset \\ \partial \mb n_2 = \emptyset}} \, \omega_\beta(\mb n_1) \, \omega_\beta(\mb n_2),
\end{align}
where ${\bf S'}$ is the set of all $S \subset V^+$ such that $ S \cap (A' \cup B' \cup \{ \mathfrak{g}\}) = \{ \mathfrak{g}\}$. Since $A'$ and $B'$ are sets of odd cardinality, one can use the correlation inequality \begin{equation*}
\langle \sigma_{A' \cup B'} \rangle_{G \setminus S} \leq \sum_{\substack{a \in A'\\ b \in B'}}  \langle \sigma_{a} \sigma_{b} \rangle_{G \setminus S} \,
\end{equation*}
coming from the Edwards-Sokal coupling and the fact that $\mathcal F_{A'\cup B'}$ is included in the event that some $a\in A'$ is connected to some $b\in B'$. 
Substituting the above inequality in \eqref{eq:qq11} and resuming over $S\in {\bf S'}$ gives us
\begin{align*} 
\sum_{\substack{\partial \mb n_1 =  A'\\ \partial \mb n_2 = B'}} \, \omega_\beta(\mb n_1) \, \omega_\beta(\mb n_2) \, & \mathbb I\big [ \mathfrak{g} \notin \mathcal C_{\mb n_1 + \mb n_2} (A') = \mathcal C_{\mb n_1 + \mb n_2} (B') \big] \\ &\leq \sum_{\substack{a \in A' \\ b \in B'}}  
\sum_{\substack{\partial \mb n_1 =  \{a, b\}\\ \partial \mb n_2 = \emptyset}} \, \omega_\beta(\mb n_1) \, \omega_\beta(\mb n_2) \, \mathbb I\big [ \mathfrak{g} \notin \mathcal C_{\mb n_1 + \mb n_2} (a) = \mathcal C_{\mb n_1 + \mb n_2} (b) \big].
\end{align*}
Summing over $A'$ and $B'$ gives us \eqref{eq:goal12} and concludes the proof.
\end{proof}
\bigbreak
\centerline{\em In the rest of the article, $p$ and $\beta$ are fixed so that $1-p=e^{-2\beta}$ and dropped from the notation. }
 \section{Applications of Pisztora's coarse-grain approach}
 
The next subsection introduces the notion of good blocks. We then use a renormalization scheme to deduce that all big boxes are connected in FK-Ising. The last subsection derives Theorem~\ref{thm:mixing} from Proposition~\ref{thm:mixing1}.
 \subsection{Blocks and good blocks}
 
For $k\ge1$ and a set $S$, introduce the set $\mathcal B_k(S)$ of boxes $\L_{k}(x)\subset S$ with $x\in k\Z^d$.  From now on, we call an element of $\mathcal B_k(S)$ a {\em block} and often identify it with the set of its edges. 
Call a block $\mathbf B$ \emph{good} in $\omega$ if 
 \begin{itemize}[noitemsep]
 \item[(a)] $\omega_{|\mathbf B}$ contains a cluster touching all the $2d$ faces of $\mathbf B$;
 \item[(b)] any open path of length $k$ in $\mathbf B$ is included in this cluster.
\end{itemize}
 Boxes of this type were used by Pisztora to derive surface order large deviation 
estimates for the Ising, Potts and FK-percolation models. While the notion of good box there is slightly different, we refer to the definition of $U$ before \cite[Theorem~3.1]{Pis96} for a stronger notion than the notion above. 
The papers \cite{Pis96} and \cite{Bod05} together imply, for every $p>p_c$, the existence of $c>0$ such that for every $k$ and every boundary condition $\xi$,
\begin{equation}\label{eq:superconnected}\phi^\xi_{\Lambda_{2k}}[\L_{k} \text{ is good}] > 1 - \e^{-ck}.\end{equation}

\begin{lemma} \label{lem:boxcon}
For every $p>p_c$, there exists $c>0$ such that for every $n\le N\le e^{n^\alpha}$ (with $\alpha<d-1$) and every $x,y\in\Z^d$ such that $\L_n(x),\L_n(y)\subset\L_N$,
\begin{equation}
\phi^0_{\Lambda_N}[ \L_n(x) \longleftrightarrow \L_n(y)] \geq 1- \exp(-c n ^{d-1}).
\end{equation}
\end{lemma}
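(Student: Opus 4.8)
The plan is to prove Lemma~\ref{lem:boxcon} by a standard Pisztora-style renormalization argument, using \eqref{eq:superconnected} as the one-block estimate. First I would fix the block scale at $k := \lfloor c_0 n^{(d-1)/d}\rfloor$ for a suitable small constant $c_0$ — more precisely, I will actually want to take blocks of macroscopic size and then declare a block \emph{good} via \eqref{eq:superconnected}, so let me instead set the block size to be a large constant multiple of a fixed scale; the cleanest choice is to work at the block scale $k$ and tile $\L_N$ by the blocks in $\mathcal B_k(\L_N)$, coloring each block good or bad according to whether $\L_k(z)$ is good in $\omega$. The point of \eqref{eq:superconnected} together with the domain Markov property is that, conditionally on the states of far-away blocks, each block is good with probability at least $1-e^{-ck}$, and crucially this estimate is uniform in the boundary condition, so the family of indicators $\{\mathbf 1[\L_k(z)\text{ bad}]\}_z$ is stochastically dominated by a Bernoulli percolation with parameter $e^{-ck}$ that can be made as small as we wish by taking $k$ large — more precisely, since the good/bad status of a block depends only on edges within a box of side $\sim k$ around it, blocks at $\ell^\infty$-distance $\ge 3$ (in block units) are conditionally independent given the rest, so we get $2$-dependent (hence dominated by i.i.d.\ subcritical, via Liggett--Schonmann--Stacey) control on the bad blocks.

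Next I would use the deterministic geometric fact that the ``good'' blocks glue together: if two blocks $\L_k(z)$ and $\L_k(z')$ are adjacent (share a face in the block lattice) and both are good, then their spanning clusters are forced to merge, because the spanning cluster of $\L_k(z)$ reaches the shared face and hence extends into $\L_k(z')$ along an open path of length $\ge k$, which by property (b) lies in the spanning cluster of $\L_k(z')$. Consequently, any connected set of good blocks in the renormalized lattice carries a single FK-cluster containing all their spanning clusters. So it suffices to show that in $\L_N$, with probability at least $1-\exp(-cn^{d-1})$, the good blocks percolate enough that there is a connected component of good blocks simultaneously touching both $\L_n(x)$ and $\L_n(y)$ — equivalently, that the bad blocks do not form a ``blocking surface'' separating $\L_n(x)$ from $\L_n(y)$ inside $\L_N$.

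To control the bad blocks I would invoke the standard subcritical-percolation blocking/surface estimate: if a $*$-connected (in the block lattice) set of bad blocks separates $\L_n(x)$ from $\L_n(y)$ within $\L_N$, then either it has diameter $\ge n/k$, or it is small but then it must itself be a $*$-connected blocking surface of size $\gtrsim (n/k)^{d-1}$ surrounding one of the two boxes. In either case it contains a $*$-connected cluster of bad blocks of cardinality at least of order $(n/k)^{d-1}$. By a union bound over the (exponentially-in-size many) possible such $*$-connected sets of bad blocks rooted near $\L_n(x)$ or $\L_n(y)$, each occurring with probability at most $(C e^{-ck})^{\text{size}}$, taking $k$ a large enough constant (depending on $d$ and $p$ only) makes $C e^{-ck}$ beat the combinatorial growth, and the resulting bound is $\exp(-c' (n/k)^{d-1}) \le \exp(-c'' n^{d-1})$ for $k$ constant. (Here the constraint $N \le e^{n^\alpha}$ with $\alpha < d-1$ is exactly what is needed so that the trivial volume factor $|\L_N|^{O(1)} \le e^{O(n^\alpha \cdot d)}$ from choosing where the blocking surface sits is absorbed by $e^{-c'' n^{d-1}}$.) On the complementary event, there is a good-block path from $\L_n(x)$ to $\L_n(y)$, hence the spanning clusters merge into one FK-cluster connecting $\L_n(x)$ to $\L_n(y)$, giving the lemma.

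The main obstacle I anticipate is not the renormalization itself — that is by now routine — but getting the quantitative bookkeeping right so that the final exponent is genuinely $n^{d-1}$ rather than something weaker, and in particular handling the entropy of blocking surfaces against the polynomial-in-$N$ volume factor under the hypothesis $N \le e^{n^\alpha}$, $\alpha < d-1$: one must make sure the surface has size $\gtrsim n^{d-1}$ (not merely $\gtrsim n$, which would be insufficient to kill the $e^{n^\alpha}$ factor) and that the Peierls-type sum over its location and shape really is dominated. A secondary technical point is justifying the stochastic domination of bad blocks by subcritical i.i.d.\ percolation uniformly over boundary conditions — this uses the comparison between boundary conditions \eqref{eq:CBC} together with \eqref{eq:superconnected}, and the finite-dependence of the good-block events, exactly as in \cite{Pis96,LSS97}.
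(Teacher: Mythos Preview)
Your renormalization setup --- fixing a constant block scale $k$, invoking \eqref{eq:superconnected} uniformly in boundary conditions, dominating the bad-block process by subcritical Bernoulli via \cite{LigSchSta97}, and using properties (a)--(b) to glue adjacent good blocks --- matches the paper exactly. The genuine difference is in the supercritical-percolation step. The paper does not run a Peierls/blocking-surface argument: it instead invokes the result of Deuschel--Pisztora \cite{deuschel1996surface} that in highly supercritical site percolation, each sub-box $\L_n(z)$ contains an open cluster occupying more than three quarters of its blocks with probability at least $1-e^{-2cn^{d-1}}$, and then union-bounds over \emph{all} sub-boxes $\L_n(z)\subset\L_N$ (this is where $N\le e^{n^\alpha}$, $\alpha<d-1$, is spent, to absorb the factor $|\L_N|$). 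On the resulting event, overlapping boxes share their dense cluster, so all boxes of size $n$ are connected simultaneously --- slightly stronger than what you aim for.

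Your blocking-surface route is a legitimate alternative, and the volume factor enters at the same place. One point to tighten: the dichotomy ``either it has diameter $\ge n/k$, or \dots'' is not a correct justification for the cardinality lower bound --- a $*$-connected set of diameter $n/k$ can be a path of only $\sim n/k$ vertices, which would give exponent $n$ rather than $n^{d-1}$ and would not beat $e^{n^\alpha}$ for $\alpha>1$. What you actually need is that any vertex-cut separating two block-boxes of side $\sim n/k$ inside $\L_N$ has cardinality $\gtrsim(n/k)^{d-1}$; this is a min-cut/isoperimetric statement (provable by exhibiting $(n/k)^{d-1}$ vertex-disjoint paths, or by the boundary-of-a-component argument), and is in essence the same surface-order input that \cite{deuschel1996surface} packages. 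So both approaches rest on the same percolation fact; the paper's route is shorter because it quotes it as a black box rather than re-deriving it.
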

\begin{proof}
Fix $p<1$. Choose $\ep>0$ small enough (see later) and $k$ large enough that 
$$\phi^\xi_{\Lambda_{2k} }[\L_{k} \text{ is good}] > 1 - \ep$$
for every boundary condition $\xi$. Define a site percolation $\eta$ on $\mathcal B_k(\L_N)$ by saying that $\mathbf B$ is open if it is good, and closed otherwise. For a box $\mathbf B$, define $M(\mathbf B)$ to be the set of all the boxes in $\mathcal B_k(\L_N)$ whose centers are at a $\ell^\infty$ distance at most $3k$ of the center of ${\bf B}$. Note that, 
$$ \phi^{0}_{\Lambda_N} [ \eta_{\,\mathbf B} \, \vert  \, \eta_{\vert_{M(\mathbf B)^c}} ] \geq 1 -\varepsilon.$$ 
One deduces from \cite{LigSchSta97} that  the process $\eta$ dominates a Bernoulli percolation $\tilde\eta$ with parameter $p$ provided that $\ep$ is small enough.

A result of Deuschel and Pisztora \cite{deuschel1996surface} shows that for $p$ close enough to 1 and $n\ge1$, the probability that, for a fixed box $\L_n(x)$, there exists an open cluster in $\tilde\eta$ containing more than three fourths of the blocks in $\mathcal B_k(\L_n(x))$ is larger than $1-\exp[-2cn^{d-1}]$. The domination of $\tilde\eta$ by $\eta$ together with a union bound shows that this cluster also exists in $\eta$. Therefore, with probability 
$$1-|\L_N|\exp(-2cn^{d-1})\ge 1-|\Lambda_{\exp(n^\alpha)}|\exp(-2cn^{d-1})\ge 1-\exp(-cn^{d-1})$$
(for $n$ large enough), every box $\L_n(x)$ in $\L_N$ satisfies that there exists an open cluster in $\eta$ containing more than half the blocks contained in $\mathcal B_k(\L_n(x))$.

As a immediate consequence, on this event, all these clusters (meaning for every $\L_n(x)$) must in fact be the 
same cluster. In particular, there exist paths of pairwise neighboring good blocks going between any two boxes of 
size $n$ in $\Lambda_N$. Property~(a) of a good block implies that there must be an open path of length at least $k$ in the common region between two neighboring 
good blocks. Property~(b), on the other hand, implies that such a path must belong to the largest clusters of each of these blocks, i.e. the largest clusters of these blocks 
are connected. Together these give us that there exists an open path in $\omega$ between any two boxes of size $n$.
\end{proof}

\begin{remark}\label{rmk:a}The result of \cite{LigSchSta97} is quantitative. Since the probability of being good is larger than $1-\e^{-ck}$, the process $\eta$ dominates a Bernoulli percolation of parameter $p$ equal to $1-\e^{-c'k}$ for some $c'=c'(c)>0$ independent of $k$.
\end{remark}
\subsection{From Proposition~\ref{thm:mixing1} to Theorem~\ref{thm:mixing}}
\label{sec:mixing}

We now prove Theorem~\ref{thm:mixing} using Proposition~\ref{thm:mixing1}. Fix some boundary conditions $\xi$. We construct a coupling $\Phi^{\xi,1}_{\Lambda_n,}$ on pairs of configurations $(\omega^\xi,\omega^1)$ with $\phi^\xi_{\Lambda_n}$ being the law of the first marginal and $\phi^1_{\Lambda_n}$ the one of the second marginal. Call a block $\mathbf B\in\mathcal B_k(\L_n)$ {\em very good} in $(\omega^\xi,\omega^1)$ if it is good in $\omega^\xi_{|\mathbf B}$ and $\omega^\xi_{|\mathbf B}=\omega^1_{|\mathbf B}$.

We construct algorithmically a coupling $\Phi^{\xi,1}_{\Lambda_n}$ block by block following steps indexed by $t$. We assume that $k$ divides $n$ (the construction can be adapted in a trivial fashion to the general case). Below, $C_t$ will denote the set of edges $e$ for which $(\omega^\xi_e,\omega^1_e)$ is sampled before step $t$. The set $A_t$ is the set of blocks that have been sampled up to time $t$.  
Set 
$$\begin{cases}A_0:=\emptyset,\\ 
B _ 0:=\{\mathbf B\in\mathcal B_k(\L_n),\mathbf B\cap(\L_{n}\setminus\L_{n-1})\ne \emptyset\} ,\\ C_0 := \emptyset.\end{cases}$$ 
(The set $B_0$ corresponds to blocks adjacent to the boundary of $\L_n$.) At Step $t$, the algorithm proceeds as follows: 
\begin{itemize}
\item If $B_t=\emptyset$,  sample $\omega^\xi_{|E_n\setminus C_{t}} = \omega ^1_{|E_n\setminus C_{t}}$ according to the measure $\phi^\xi_{\Lambda_n} (  \, . \, \vert \omega^\xi_{|C_{t}})$ and terminate the algorithm.

\item If $B_t\ne \emptyset$, choose a block $\mathbf B\in B_{t}$ and define $D_t :=\mathbf B \setminus C_t$.
Then, sample $\omega^\xi_{|D_t}\le \omega^1 _{|D_t}$ such that $\omega^\xi_{|D_t}$ has the law $\phi^\xi_{\Lambda_n} [\cdot_{|D_t} \vert\omega^\xi_{|C_t}]$ and $\omega^1_{|D_t}$ the law $\phi^1_{\Lambda_n} [  \cdot_{|D_t} \vert \omega^1_{|C_t}]$. Finally, if $N(\mathbf B)$ denotes the set of blocks in $\mathcal B_k(\L_n)$ that intersect $\mathbf B$, set 
$$\begin{cases}
A_{t+1} := A_t \cup \{\mathbf B\},\\
B_{t+1} := B_t \setminus \{\mathbf B\}\text{ if }\mathbf B\text{ is very good and $B_t\cup N(\mathbf B)\setminus A_{t+1}$ otherwise,}\\
C_{t+1} := C_t \cup D_t.
\end{cases}$$
\end{itemize}
When the algorithm terminates, set $T$ for the terminal time and return  $(\omega^\xi,\omega^1)$.

\begin{lemma}\label{lem:a1}
The measure $\Phi^{\xi,1}_{\L_n}$ satisfies that 
\begin{itemize}[noitemsep]
\item $\omega^\xi$ has law $\phi^\xi_{\L_n}$ and $\omega^1$ has law $\phi^1_{\L_n}$,
\item $\omega^\xi\le \omega^1$,
\item $\omega^\xi_e=\omega^1_e$ if $e\notin C_{T}$.
\end{itemize} 
\end{lemma}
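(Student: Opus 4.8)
The three claims are read off directly from the algorithmic construction; the work is in unwinding the definitions and checking that the local sampling steps are consistent. The plan is to verify each bullet in turn.

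\emph{First bullet (marginals).} I would argue that at each step the conditional law of the freshly sampled edges is the correct conditional law of $\phi^\xi_{\L_n}$ (resp.\ $\phi^1_{\L_n}$) given the edges sampled so far. This is true by construction: in the ``$B_t\ne\emptyset$'' branch we sample $\omega^\xi_{|D_t}$ from $\phi^\xi_{\L_n}[\,\cdot_{|D_t}\mid \omega^\xi_{|C_t}]$ and $\omega^1_{|D_t}$ from $\phi^1_{\L_n}[\,\cdot_{|D_t}\mid \omega^1_{|C_t}]$, and in the terminating ``$B_t=\emptyset$'' branch we sample the remaining edges of $E_n$ from $\phi^\xi_{\L_n}(\,\cdot\mid\omega^\xi_{|C_t})$; note these are honest conditional distributions because $C_t$ and $D_t$ together with the terminal batch exhaust $E_n$ and the blocks/edges are revealed in a fixed (stopping-time-like) order, so the tower property applies and the product of conditionals telescopes to the full measure $\phi^\xi_{\L_n}$ (resp.\ $\phi^1_{\L_n}$). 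The only subtlety is that the set of edges revealed at a given step is itself random (it depends on which blocks became ``not very good''), but since the decision to reveal a block is measurable with respect to the already-revealed edges, this is a valid sequential sampling scheme; I would state this as the standard fact that a measure can be sampled coordinate-by-coordinate in any adapted order.

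\emph{Second bullet (monotone coupling $\omega^\xi\le\omega^1$).} Here I would use the monotone coupling of FK measures: whenever we sample $\omega^\xi_{|D_t}$ and $\omega^1_{|D_t}$ we do so \emph{jointly} so that $\omega^\xi_{|D_t}\le\omega^1_{|D_t}$, which is possible precisely because $\phi^\xi_{\L_n}[\,\cdot_{|D_t}\mid\omega^\xi_{|C_t}]$ is stochastically dominated by $\phi^1_{\L_n}[\,\cdot_{|D_t}\mid\omega^1_{|C_t}]$. That domination follows from \eqref{eq:domain Markov} and \eqref{eq:CBC}: conditioning on $C_t$-edges turns each measure into an FK measure on $D_t$ with boundary conditions induced by the revealed edges, and since inductively $\omega^\xi_{|C_t}\le\omega^1_{|C_t}$ (together with $\xi\le 1$, the wired condition being maximal) the $\omega^1$-side has dominating boundary conditions. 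In the terminating step we set $\omega^\xi=\omega^1$ on the remaining edges, which trivially respects $\le$ given that the already-revealed parts satisfy it. One should invoke Remark~\ref{rmk:joint} (the joint Markov property and joint FKG) to make these conditional-coupling steps rigorous.

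\emph{Third bullet ($\omega^\xi_e=\omega^1_e$ for $e\notin C_T$).} This is immediate: $C_T$ is exactly the set of edges whose values were sampled during the block-by-block phase, and when $B_t$ finally becomes empty the algorithm samples $\omega^\xi_{|E_n\setminus C_T}=\omega^1_{|E_n\setminus C_T}$ as a single common configuration. Thus outside $C_T$ the two configurations agree by fiat.

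\emph{Main obstacle.} None of the three statements is deep; the one place requiring care is making precise that the sequential, data-dependent revealing order in the first two bullets genuinely reconstructs $\phi^\xi_{\L_n}$ and $\phi^1_{\L_n}$ and is compatible with a monotone coupling at every step. I would handle this by phrasing the construction as a single filtration $(\mathcal G_t)$ generated by the revealed edges, checking that ``$\mathbf B$ is chosen next'' and ``$\mathbf B$ is very good'' are $\mathcal G_t$-measurable, and then applying the joint domain Markov property and joint FKG of Remark~\ref{rmk:joint} step by step; the telescoping of conditionals then gives both the correct marginals and the preserved ordering simultaneously.
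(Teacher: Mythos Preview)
There is a genuine gap in your argument for the first bullet, specifically for the claim that $\omega^1$ has law $\phi^1_{\L_n}$. At the terminal step, the algorithm samples a \emph{single} configuration on $E_n\setminus C_T$ from $\phi^\xi_{\L_n}(\,\cdot\mid\omega^\xi_{|C_T})$ and assigns it to \emph{both} $\omega^\xi$ and $\omega^1$. This is exactly what makes the third bullet immediate, but it also means the telescoping you invoke for $\omega^1$ does not close: the last factor is the $\xi$-conditional, not $\phi^1_{\L_n}(\,\cdot\mid\omega^1_{|C_T})$. For the $\omega^1$ marginal to be correct you must show that, almost surely,
\[
\phi^\xi_{\L_n}\big(\,\cdot_{|E_n\setminus C_T}\,\big|\,\omega^\xi_{|C_T}\big)=\phi^1_{\L_n}\big(\,\cdot_{|E_n\setminus C_T}\,\big|\,\omega^1_{|C_T}\big),
\]
which by the domain Markov property amounts to showing that the boundary conditions induced on $E_n\setminus C_T$ by $(\xi,\omega^\xi_{|C_T})$ and by $(1,\omega^1_{|C_T})$ coincide. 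This is precisely the nontrivial content of the lemma, and it is \emph{not} a consequence of the tower property or of Remark~\ref{rmk:joint} alone.

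The paper's proof supplies exactly this missing piece. One observes that the blocks of $A_T$ adjacent to $\mathcal B_k(\L_n)\setminus A_T$ were never added to any $B_t$ after being sampled, hence are all very good: on them $\omega^\xi$ and $\omega^1$ agree and are good in the sense of Section~3.1. Using the connectivity properties of good blocks (as in the proof of Lemma~\ref{lem:boxcon}), one deduces that any two boundary vertices of $E_n\setminus C_T$ connected in $\omega^1_{|C_T}$ are already connected in this layer of very good blocks, hence also in $\omega^\xi_{|C_T}$. Combined with $\omega^\xi\le\omega^1$, this gives equality of the induced boundary conditions. Your sketch omits this argument entirely; without it, the first bullet for $\omega^1$ is unproved.
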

\begin{proof}
To prove this lemma, it suffices to prove that the boundary conditions induced on $E_n\setminus C_T$ by $\omega^\xi_{|C_T}$ and $\omega^1_{|C_T}$ are the same. Indeed, the three items follow readily from this observation and the joint spatial Markov property. 
Since $\omega^\xi_{|C_T}\le\omega^1_{|C_T}$, it suffices to prove that for every vertices $x$ and $y$ on the boundary of $E_n\setminus C_T$,
\begin{equation}\label{eq:js}x\stackrel{\omega^1_{|C_T}}{\longleftrightarrow}y\ \ \Longrightarrow \ \ x\stackrel{\omega^\xi_{|C_T}}{\longleftrightarrow}y\quad.\end{equation}
If $C_T=E_n$ there is nothing to prove. If $C_T\ne E_n$, let 
$$Y:=\mathcal B_k(\L_n)\setminus A_T\qquad\text{ and 
}\qquad Z:=\{\mathbf B\in A_T\text{ intersecting a block in }Y\},$$ where below we also identify $Z$ with the set of edges in its blocks. 
The fact that $B_T= \emptyset$ implies that $\omega^\xi$ and $\omega^1$ coincide on $Z$ and every $\mathbf B\in Z$ is very good for $\omega^1$ and $\omega^\xi$. Now, for every connected component $C$ of $C_T$, the set of blocks in $Z$ that intersect $C$ are connected in the following sense: every two such blocks ${\bf B}$ and ${\bf B'}$ are connected by a sequence of blocks ${\bf B}={\bf B}_1,\dots,{\bf B}_s={\bf B'}$ of $Z$ such that ${\bf B}_i\cap{\bf B}_{i+1}\ne \emptyset$ for every $1\le i<s$. The discussion in the proof of Lemma~\ref{lem:boxcon} implies that all the big clusters in these blocks are connected to each other inside $Z$.  Thus, if two vertices $x$ and $y$ on the boundary of $C_T$ are connected in $\omega^\xi_{|C_T}$, then they already are in $\omega^\xi_{|Z}=\omega^1_{|Z}$. This proves \eqref{eq:js} and therefore concludes the proof.\end{proof}
\begin{proof}[Proof of Theorem~\ref{thm:mixing}]
Let $D$ be the number of blocks in $M({\bf B})$ (recall the definition of $M({\bf B})$ from the proof of Lemma~\ref{lem:boxcon} and observe that the choice of ${\bf B}$ is irrelevant here), and choose $\ep$ so that $2D\ep<1/e$. By Proposition~\ref{thm:mixing1}, pick $k$ large enough that
\begin{align*}
\sum_{e\in\L_{k}}\phi^1_{\L_{2k}}[\omega_e]-\phi^0_{\L_{2k}}[\omega_e]\le |\L_{k}|e^{-c(\log k)^{1+c}}\le \varepsilon.
\end{align*}
Also, assume that $k$ is chosen large enough that the probability of being good is larger than $1-\ep$ (this is doable by using \eqref{eq:superconnected}).

 By the Markov property \eqref{eq:domain Markov}, \eqref{eq:main2} follows from the next claim:  for every boundary condition $\xi$ on $\Z^d$ and every event $A$ depending on edges in $E_{n/2}$ only,  
\begin{equation}\label{eq:bk}|\phi^\xi_{\L_n}[A]-\phi^1_{\L_n}[A]|\le \Phi^{\xi,1}[C_T\cap E_{n/2}\ne \emptyset]\le\exp[-cn].\end{equation}
The first inequality follows directly from the third item of Lemma~\ref{lem:a1}. We therefore focus on the second one.
Assume that $C_T\cap E_{n/2}\ne \emptyset$. Then, there must exist a sequence of times $t_1<t_2<\dots<t_s$ with $s\ge n/(16k)$ such that the blocks $\mathbf B_1,\dots,\mathbf B_s$ used by the algorithm constructing $\Phi^{\xi,1}_{\L_n}$ at times $t_1,\dots,t_s$ are disjoint, not very good, and such that ${\bf B}_{i+1}\in M({\bf B}_i)$ for every $1\le i<s$. The choice of $k$ immediately implies that conditioned on $\omega^\xi$ and $\omega^1$ in the blocks $\mathbf B_1,\dots,\mathbf B_i$, the block $\mathbf B_{i+1}$ is not very good with probability smaller than or equal to $2\ep$. We deduce that 
\begin{equation} \label{eq:goal2}
\Phi^{\xi,1}_{\Lambda_n} [ C_T\cap E_{n/2}\ne\emptyset ] \le(D2\ep)^{n/(16k)}.
\end{equation}  
The choice of $\ep$ proves \eqref{eq:bk} with $c=1/(16k)$. 
\end{proof}
\section{Proof of Proposition~\ref{thm:mixing1}}
The proof of Proposition~\ref{thm:mixing1} can be decomposed into several steps. We first prove a very weak mixing property (where the Radon-Nikodym derivative is bounded from above by $\exp(cn^{d-1})$. We then use this property to show Proposition~\ref{thm:mixing1}, but basing our study on two technical lemmata whose proofs are postponed to the next section.

\subsection{Connection probability between boxes for double random current}
The main result of this section is the proposition below. The proof relies on a number of properties of the random current, combined with a mixing property for the Fortuin-Kasteleyn percolation.
 \begin{proposition}
 	\label{prop:connectivity}
 	Fix $\beta>\beta_c$. There exists $c>0$ such that for every $N \geq 2n$, every $x, y,w,z \in \L_{N}$ with $w, z \notin\L_n(x) \cup \L_n(y)\subset\L_N$, we have
 	$$\P^{\{w, z\}, \emptyset}_{\L_N^+, \L_N}[\lr{\L_N}{\mb n_1 + \mb n_2}{\L_n(x)}{\L_n(y)}] \geq 1 - \e^{-cn^{d-1}}\,.$$
 \end{proposition}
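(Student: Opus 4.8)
The plan is to transfer the event in the statement to a connectivity statement for the FK-Ising model, where the super-connectivity of Lemma~\ref{lem:boxcon} is available. Write $E$ for the complementary event, $E=\{\L_n(x)\nleftrightarrow\L_n(y)\ \text{in}\ \mb n_1+\mb n_2\}$ (connections counted inside $\L_N$); the goal is to show $\P^{\{w,z\},\emptyset}_{\L_N^+,\L_N}[E]\le\e^{-cn^{d-1}}$.

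\emph{Step 1 (removing the sources).} I would first condition on one of the two clusters $\mathcal C_x:=\mathcal C_{\mb n_1+\mb n_2}(\L_n(x))$, $\mathcal C_y:=\mathcal C_{\mb n_1+\mb n_2}(\L_n(y))$. On $E$ these are disjoint, and since $\partial\mb n_1=\{w,z\}$ while the ghost $\mathfrak g$ is not a source of $\mb n_1$, a parity count shows that each of them contains either both of $w,z$ or neither; being disjoint, at least one of them — say $\mathcal C_y$ — then contains neither. Revealing that cluster, exactly as in the ``conditioning on a cluster'' computation used in the proof of Lemma~\ref{lem:00}, factorizes the current weight as a weight on the edges of $\mathcal C_y$, stemming from a \emph{sourceless} pair of currents whose cluster of $\L_n(y)$ is all of $\mathcal C_y$, times a weight on the remaining edges, which now carries the sources $\{w,z\}$. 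This last factor equals a prefactor of the form $\langle\sigma_w\sigma_z\rangle^{\pm}_{(\cdot)}\big/\langle\sigma_w\sigma_z\rangle^+_{\L_N}$ times a ratio of partition functions. Because $\beta>\beta_c$, we have $\langle\sigma_w\sigma_z\rangle^+_{\L_N}\ge\langle\sigma_w\rangle^+_{\L_N}\langle\sigma_z\rangle^+_{\L_N}\ge\big(\langle\sigma_0\rangle^+_\beta\big)^2>0$ by Griffiths' inequality \eqref{eq:Griffiths1} and monotonicity of $\langle\cdot\rangle^+$ in the volume, and the partition-function ratio is at most $1$ (the random-current partition function satisfies $Z_{\L_N}\ge Z_S\,Z_{\L_N\setminus S}$); so the prefactor is bounded by a constant $C=C(\beta,d)$, uniformly in the revealed cluster and in $N$. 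Summing over the admissible clusters (and adding the symmetric term where $\mathcal C_x$ is revealed) yields
$$\P^{\{w,z\},\emptyset}_{\L_N^+,\L_N}[E]\ \le\ C\cdot\P^{\emptyset,\emptyset}_{\L_N,\L_N}\big[\L_n(x)\nleftrightarrow\L_n(y)\big].$$

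\emph{Step 2 (comparison with FK-Ising and conclusion).} For the sourceless duplicated current, the switching lemma combined with the Edwards-Sokal coupling gives $\P^{\emptyset,\emptyset}_{G,G}[\lr{G}{\mb n_1+\mb n_2}{u}{v}]=\langle\sigma_u\sigma_v\rangle_G^2=\phi^0_G[u\leftrightarrow v]^2$. Running the same cluster-revealing step once more — now matching the duplicated-current cluster of $\L_n(y)$ to the FK-Ising cluster of $\L_n(y)$, the comparison being multiplicative with a $\beta$-dependent constant by the bookkeeping of Step~1 and the elementary bound $1-\phi^0[u\leftrightarrow v]^2\le 2\,\phi^0[u\nleftrightarrow v]$ — I would obtain
$$\P^{\emptyset,\emptyset}_{\L_N,\L_N}\big[\L_n(x)\nleftrightarrow\L_n(y)\big]\ \le\ C'(\beta)\cdot\phi^0_{\L_N}\big[\L_n(x)\nleftrightarrow\L_n(y)\big].$$
By Lemma~\ref{lem:boxcon} — and by its proof, which in fact covers all $N\ge 2n$ once one complements the union bound (valid for $N$ not too large) by uniqueness of the giant cluster of $\phi^0_{\L_N}$, which fails with probability at most $\e^{-cN^{d-1}}\le\e^{-cn^{d-1}}$ — the right-hand side is $\le\e^{-cn^{d-1}}$, and chaining the two displays finishes the proof.

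\emph{Where the work is.} The genuine difficulty is concentrated in Step~1 and is caused by the ghost vertex: when the revealed cluster reaches $\partial\L_N$ (equivalently, contains $\mathfrak g$), the relevant partition-function ratio takes the form $Z^+_S/Z^{\mathrm{free}}_S$, which is not bounded, and the factorization above moreover interacts with the ghost edges of $\mb n_1$. Handling this requires isolating the clusters that touch the boundary, replacing free FK-Ising quantities by wired ones (for which the super-connectivity bound \eqref{eq:superconnected} still holds uniformly over boundary conditions), and offsetting the partition-function blow-up against the small probability that a set with large boundary be exactly the revealed cluster. This boundary bookkeeping, rather than the structural switching-lemma/FK reduction, is where I expect the bulk of the effort to lie.
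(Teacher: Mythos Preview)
Your overall plan --- reduce the random-current non-connection event to an FK non-connection and then invoke Lemma~\ref{lem:boxcon} --- matches the paper's endpoint, but the route you propose has a genuine gap and differs from the paper's argument in an essential way.

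The gap is Step~2. You assert $\P^{\emptyset,\emptyset}_{\L_N,\L_N}[\L_n(x)\nleftrightarrow\L_n(y)]\le C'(\beta)\,\phi^0_{\L_N}[\L_n(x)\nleftrightarrow\L_n(y)]$ with a \emph{uniform} constant, but the justification (``running the same cluster-revealing step once more'') is not an argument: revealing the $\n_1+\n_2$-cluster of $\L_n(y)$ gives you a factorization over currents, not a comparison with the FK law, and there is no known coupling producing a bounded Radon--Nikodym derivative in that direction (the standard coupling makes FK \emph{more} connected than the double current, which is the wrong inequality). The point-to-point identity $\P^{\emptyset,\emptyset}[u\leftrightarrow v]=\langle\sigma_u\sigma_v\rangle^2$ does not extend to box-to-box events, and that is precisely the obstacle. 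Step~1 also has issues beyond the ghost bookkeeping you flag: the parity claim ``each cluster contains both of $w,z$ or neither'' uses clusters in $\L_N$, but $\n_1$ lives on $\L_N^+$, so ghost edges from $\mathcal C_y$ spoil the parity count in general.

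The paper bypasses both problems with one device: it \emph{collapses} $\L_{n/2}(x)$ and $\L_{n/2}(y)$ to single vertices $\mathbf x,\mathbf y$, turning the box-to-box event into a point-to-point event on the modified graph $\L_N^{\bullet\bullet}$. For points, the switching identity \eqref{eq:switching1} applies directly and yields $\P^{\{w,z\},\emptyset}_{(\L_N^{\bullet\bullet})^+,\L_N^{\bullet\bullet}}[\mathbf x\leftrightarrow\mathbf y]\ge\langle\sigma_{\mathbf x}\sigma_{\mathbf y}\rangle_{\L_N^{\bullet\bullet}}^2$ after one use of Griffiths --- this disposes of the sources $w,z$ \emph{without} any cluster surgery or ghost accounting. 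The price is that one must compare $\P^{\{w,z\},\emptyset}_{\L_N^+,\L_N}$ with $\P^{\{w,z\},\emptyset}_{(\L_N^{\bullet\bullet})^+,\L_N^{\bullet\bullet}}$ on events depending on edges outside $\L_n(x)\cup\L_n(y)$; the paper shows (Lemma~\ref{lem:weak weak mix}) that this costs at most $e^{\varepsilon n^{d-1}}$, not a constant, and this surface-order loss is then absorbed by the $e^{-cn^{d-1}}$ coming from Lemma~\ref{lem:boxcon}. So the ``hard work'' is indeed a boundary-cost estimate, but it is a comparison of current measures on collapsed vs.\ uncollapsed graphs (proved via an FK coupling), not the partition-function bookkeeping you anticipated, and the target is $e^{\varepsilon n^{d-1}}$ rather than $O(1)$.
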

 In the whole section, we will use the following notation. Without loss of generality, we can assume that $n$ is even. Assume that $\L_n(x)\cap\L_n(y)=\emptyset$. Construct the graphs 
 $\L_N^{\bullet\bullet}:=\L_N^{\bullet\bullet}(x,y,n)$ and  $(\L_N^{\bullet\bullet})^+:=(\L_N^{\bullet\bullet}(x,y,n))^+$ obtained from $\L_N$ and 
 $\L_N^+$ by replacing $\L_{n/2}(x)$ and $\L_{n/2}(y)$ by two vertices ${\bf x}$ and ${\bf y}$, which are connected to each vertex outside $\L_{n/2}(x)\cup
 \L_{n/2}(y)$ by the number of edges between this vertex and $\L_{n/2}(x)$ (resp.~$\L_{n/2}(y)$). Similarly, one defines $\L_{n}^\bullet(x)$ to be the graph obtained from $\L_{n}(x)$ by merging all the vertices in $\L_{{n/2}}(x)$ following the same procedure as for $\L_N^{\bullet\bullet}$. 
 Note that the FK-Ising and Ising models on those graphs can be seen as models on the original graphs, for which edges $ e $ in $\L_{n/2}(x) \cup \L_{n/2}(y)$ have $p_e=1$ (for the FK-Ising) and have infinite coupling constants (for the Ising model).
  This observation is useful to keep in mind when applying \eqref{eq:Griffiths2} for instance.
 
\begin{proof}If $\L_n(x)\cap\L_n(y)\ne\emptyset$, one does not need to do anything. We therefore assume that $\L_n(x)\cap\L_n(y)=\emptyset$ and consider the graph $\L_N^{\bullet\bullet}$ defined above.
Equation~\eqref{eq:switching1} implies that 
\begin{align}
	\label{eq:collapsed_dc1}
	\P^{\{w, z\}, \emptyset}_{(\L_N^{\bullet\bullet})^+, \L_N^{\bullet\bullet}}[\lr{\L_N}{\mb n_1 + \mb n_2}{{\bf x}}{{\bf y}}]&=\frac{\ising{\sigma_{{\bf x}}\sigma_{{\bf y}}\sigma_w\sigma_z}^+_{\L_N^{\bullet\bullet}}\,\ising{\sigma_{{\bf x}}\sigma_{{\bf y}}}_{\L_N^{\bullet\bullet}}}{\ising{\sigma_w\sigma_z}^+_{\L_N^{\bullet\bullet}}}\ge \ising{\sigma_{{\bf x}}\sigma_{{\bf y}}}_{\L_N^{\bullet\bullet}}^2,
	\end{align}
	where in the second inequality we used Griffiths' inequalities \eqref{eq:Griffiths1} and \eqref{eq:Griffiths2}. The Edwards-Sokal coupling \eqref{eq:ES} and the FKG inequality \eqref{eq:FKG} (the measure on $\L_N^{\bullet\bullet}$ can be understood as the measure on $\L_N$ with edges in $\L_{n/2}(x)\cup\L_{n/2}(y)$ conditioned to be open) imply that 
	\begin{equation}\label{eq:k}\ising{\sigma_{{\bf x}}\sigma_{{\bf y}}}_{\L_N^{\bullet\bullet}}=\phi_{\L_N^{\bullet\bullet}}[{\bf x}\leftrightarrow{\bf y}]\ge \phi_{\L_N}[\L_{{n/2}}(x)\leftrightarrow\L_{{n/2}}(y)]\ge 1-\exp(-cn^{d-1}),\end{equation}
	where the last inequality follows from Lemma~\ref{lem:boxcon}.
	
	Assume for a moment that for every $\ep>0$, one can choose $n$ large enough that for every choice of $N$ and $x,y$, for every event $\mathcal E$ depending on $(\n_1,\n_2)$ on edges in $E:=E_N\setminus (E_{n}(x)\cup E_{n}(y))$ only, we have that
	\begin{equation}\label{eq:kk}\P^{\{w, z\}, \emptyset}_{\L_N^+, \L_N}[\mathcal E]\le e^{\ep n^{d-1}}\P^{\{w, z\}, \emptyset}_{(\L_N^{\bullet\bullet})^+, \L_N^{\bullet\bullet}}[\mathcal E].\end{equation}
	Then, one would deduce from plugging the estimate \eqref{eq:k} into \eqref{eq:collapsed_dc1}, and then using \eqref{eq:kk} (together with the fact that if ${\bf x}$ is connected to ${\bf y}$, then $\L_n(x)$ is connected to $\L_n(y)$), that 
	$$\P^{\{w, z\}, \emptyset}_{\L_N^+, \L_N}[\nlr{\L_N}{\mb n_1 + \mb n_2}{\L_{n}(x)}{\L_{n}(y)}]\le e^{\ep n^{d-1}}\P^{\{w, z\}, \emptyset}_{(\L_N^{\bullet\bullet})^+, \L_N^{\bullet\bullet}}[\nlr{\L_N}{\mb n_1 + \mb n_2}{\L_{n}(x)}{\L_{n}(y)}]\le e^{(\ep-2c) n^{d-1}},$$
	which would conclude the proof.
We therefore focus on proving \eqref{eq:kk}. Let $\L \coloneqq \L_n(x) \cup \L_n(y)$ and consider a current ${\bf m}\in \N^E$ with $\partial {\bf m}\subset\partial\Lambda$. Let us also consider the event $\mathcal E_{\bf m}$ 
that $\n$ is equal to ${\bf m}$ on $E$. A trivial manipulation using the expression of 
weights for currents (in the first line) and then Griffiths inequality \eqref{eq:Griffiths2} in the second give that 
\begin{align*}
\frac{\P^{\emptyset}_{\L_N}[\mathcal E_{\bf m}]}{\P^{\emptyset}_{\L_N^{\bullet\bullet}}[\mathcal E_{\bf m}]}&=\frac{\ising{\sigma_{\partial{\bf m}}}_{\L_{n}(x)\cup\L_{n}(y)}}{\ising{\sigma_{\partial{\bf m}}}_{\L_{n}^\bullet(x)\cup\L_{n}^\bullet(y)}}\,\frac{\displaystyle\sum_{\mb m' \in  \N^E, \, \partial\mb m'\subset\partial\L}w_\beta(\mb m')\ising{\sigma_{\partial\mb m'}}_{\L_{n}^\bullet(x)\cup\L_{n}^\bullet(y)}}{\displaystyle\sum_{\mb m' \in  \N^E, \, \partial\mb m'\subset\partial\L}w_\beta(\mb m')\ising{\sigma_{\partial\mb m'}}_{\L_{n}(x)\cup\L_{n}(y)}}\\
&\le \max_{\substack{A\subset\partial\L_{n}(x)\\ |A|\text{ even}}}\frac{\ising{\sigma_A}_{\L_{n}^\bullet(x)}}{\ising{\sigma_A}_{\L_{n}(x)}}\times\max_{\substack{B\subset\partial\L_{n}(y)\\ |B|\text{ even}}}\frac{\ising{\sigma_B}_{\L_{n}^\bullet(y)}}{\ising{\sigma_B}_{\L_{n}(y)}},
\end{align*}
where the sums on the first line are on currents on $E$ (with the right sources). 
A similar identity can be derived for $\P^{\{z,w\}}_{\L_N^+}$ and $\P^{\{z,w\}}_{(\L_N^{\bullet\bullet})^+}$ instead of $\P^{\emptyset}_{\L_N}$ and $\P^{\emptyset}_{\L_N^{\bullet\bullet}}$. 

We deduce, by decomposing on possible values of $(\n_1,\n_2)$ on $E$, that 
$$\P^{\{w, z\}, \emptyset}_{\L_N^+, \L_N}[\mathcal E]\le  \Big(\max_{\substack{A\subset\partial\L_{n}\\ |A|\text{ even}}}\frac{\ising{\sigma_A}_{\L_{n}^\bullet}}{\ising{\sigma_A}_{\L_{n}}}\Big)^4\ \P^{\{w, z\}, \emptyset}_{(\L_N^{\bullet\bullet})^+, \L_N^{\bullet\bullet}}[\mathcal E],$$
so that \eqref{eq:kk} follows from the following lemma.
\end{proof}

\begin{lemma}\label{lem:weak weak mix}
Fix $\beta>\beta_c$ and $\ep>0$. Then, for $n$ large enough, we have that for every $A\subset\partial\L_n$,
\begin{equation}\label{eq:wwmixing}\ising{\sigma_A}_{\L_n^\bullet} \leq \e^{\ep n^{d-1}}\ising{\sigma_A}_{\L_n}.\end{equation}
\end{lemma}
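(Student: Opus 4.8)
The plan is to compare the Ising measure on $\L_n$ with that on $\L_n^\bullet$, the latter being the measure on $\L_n$ in which all edges inside $\L_{n/2}$ have infinite coupling constant (equivalently, all spins in $\L_{n/2}$ are identified). By Griffiths' second inequality \eqref{eq:Griffiths2} (monotonicity in $J$), for every $A\subset\partial\L_n$ one has $\ising{\sigma_A}_{\L_n^\bullet}\ge\ising{\sigma_A}_{\L_n}$, so the content of the lemma is the matching upper bound up to the subexponential-in-surface-order factor $\e^{\ep n^{d-1}}$. The natural quantity to control is the ratio $\ising{\sigma_A}_{\L_n^\bullet}/\ising{\sigma_A}_{\L_n}$ uniformly over $A\subset\partial\L_n$ with $|A|$ even (odd $A$ are irrelevant since then both sides vanish, or one can allow the ghost; in our application only even $A$ occur).

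\textbf{Main step: a surface-order bound via Edwards--Sokal and box connectivity.} I would pass to the FK-Ising representation. Writing $p=1-e^{-2\beta}$, the Edwards--Sokal coupling \eqref{eq:ES} gives $\ising{\sigma_A}_{\L_n}=\phi^0_{\L_n}[\mathcal F_A]$ and, since $\L_n^\bullet$ is $\L_n$ with the edges of $\L_{n/2}$ forced open, $\ising{\sigma_A}_{\L_n^\bullet}=\phi^0_{\L_n}[\mathcal F_A\mid \text{all edges in }E_{n/2}\text{ open}]$, where $\mathcal F_A$ is the event that every cluster meets $A$ an even number of times. Forcing the edges of $E_{n/2}$ open merges the cluster structure only inside $\L_{n/2}$, which is disjoint from $A\subset\partial\L_n$; hence on the event $G_{n/2}$ that $\L_{n/2}$ is already wired together in $\omega$ (i.e.\ $\L_{n/2}$ lies in a single cluster), the conditioning changes nothing and $\mathbf 1_{\mathcal F_A}$ is unchanged. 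This suggests the estimate
\begin{equation}\label{eq:plan_ratio}
\frac{\ising{\sigma_A}_{\L_n^\bullet}}{\ising{\sigma_A}_{\L_n}}=\frac{\phi^0_{\L_n}[\mathcal F_A\mid \text{all edges of }E_{n/2}\text{ open}]}{\phi^0_{\L_n}[\mathcal F_A]}\le \frac{1}{\phi^0_{\L_n}[\mathcal F_A]}\le\frac{1}{\langle\sigma_0\rangle^+_{?}\cdots}
\end{equation}
— more precisely, I would lower-bound the denominator $\ising{\sigma_A}_{\L_n}$ by a quantity that is $\ge e^{-\ep n^{d-1}}$. Such a lower bound is exactly of surface-order type: using Griffiths \eqref{eq:Griffiths1} one has $\ising{\sigma_A}_{\L_n}\ge\prod_{a\in A}$ of one-point-type factors only after suitable pairing, but a cleaner route is to note $\phi^0_{\L_n}[\mathcal F_A]\ge \phi^0_{\L_n}[\text{all }a\in A\text{ connected to each other}]$ when $|A|$ even; and by Lemma~\ref{lem:boxcon} (with the roles of the small boxes being single boundary vertices, or rather unit boxes around them) together with the FKG inequality, the probability that all $|A|\le |\partial\L_n|=O(n^{d-1})$ marked vertices lie in one cluster is at least $(1-e^{-c})^{O(n^{d-1})}\ge e^{-\ep n^{d-1}}$ for $n$ large, since the per-pair failure probability is a constant $<1$ that can be absorbed. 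Then \eqref{eq:plan_ratio} gives $\ising{\sigma_A}_{\L_n^\bullet}/\ising{\sigma_A}_{\L_n}\le e^{\ep n^{d-1}}$, which is \eqref{eq:wwmixing}.

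\textbf{The obstacle.} The delicate point is the step $\phi^0_{\L_n}[\mathcal F_A\mid \text{all edges of }E_{n/2}\text{ open}]\le C\,\phi^0_{\L_n}[\mathcal F_A]$: conditioning on an increasing event, $\mathcal F_A$ is \emph{not} monotone, so one cannot invoke FKG directly. One must argue that the conditioning only affects clusters inside $\L_{n/2}$, which is disjoint from $A$; formally, I would condition further on the full configuration $\omega$ restricted to $E_n\setminus E_{n/2}$, observe that whether $\mathcal F_A$ holds depends on this outer configuration together with only the \emph{partition} induced on $\partial\L_{n/2}$ by the inner edges, and that forcing all inner edges open makes this partition the finest possible (one block); then a comparison shows the conditional probability of $\mathcal F_A$ given the outer configuration can only increase by a factor bounded by $1/\phi^0_{\L_{n/2}}[\text{all of }\L_{n/2}\text{ wired}]$, which is $\ge e^{-\ep n^{d-1}/2}$ by Lemma~\ref{lem:boxcon} again. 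Assembling these two surface-order factors yields the claim, and absorbing all the $O(n^{d-1})$ combinatorial losses into $\ep$ (for $n$ large) is the only remaining bookkeeping. I expect this partition/monotonicity comparison to be where the real work lies; everything else is an application of \eqref{eq:ES}, \eqref{eq:Griffiths2}, FKG, and Lemma~\ref{lem:boxcon}.
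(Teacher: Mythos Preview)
Your translation to FK-Ising and the identification of the partition on $\partial\L_{n/2}$ as the relevant object are correct, and the inequality $\phi^0_{\L_n^\bullet}[\mathcal F_A]\le \phi^0_{\L_n}[\mathcal F_A]/\phi^0_{\L_n}[W]$ (with $W$ the event that all of $\partial\L_{n/2}$ is connected by edges in $E_{n/2}$) can indeed be made rigorous via the domain Markov property. The fatal gap is the claim that $\phi^0_{\L_{n/2}}[\text{all of }\L_{n/2}\text{ wired}]\ge e^{-\ep n^{d-1}/2}$ for arbitrary $\ep$: Lemma~\ref{lem:boxcon} connects \emph{boxes of mesoscopic size}, not single vertices, and cannot be applied with unit boxes. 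In fact this event has probability at most $(1-(1-p)^{2d})^{cn^{d-1}}=e^{-c_0 n^{d-1}}$ for a \emph{fixed} $c_0=c_0(p)>0$, since under stochastic domination by Bernoulli($p$) a positive density of boundary vertices of $\L_{n/2}$ are isolated. The same defect afflicts your direct lower bound on $\phi^0_{\L_n}[\mathcal F_A]$. Your method therefore yields only $\ising{\sigma_A}_{\L_n^\bullet}\le e^{Cn^{d-1}}\ising{\sigma_A}_{\L_n}$ with a fixed $C=C(p)$, which neither proves the lemma as stated nor suffices for its application in Proposition~\ref{prop:connectivity}, where the constant must be small relative to the $c$ of Lemma~\ref{lem:boxcon}.

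The paper's proof circumvents this by never asking every boundary vertex to be connected. It builds a monotone coupling $(\omega,\omega^\bullet)$ and explores from $\L_{n/2}$ outward in $\Theta(n/k)$ concentric shells of block-width $k$. Using the good-block technology together with a \emph{volume}-order large-deviation estimate on the total number of bad blocks (those where $\omega\ne\omega^\bullet$ or the block is not good), it shows that with probability $\ge 1-e^{-cn^d}$ \emph{some} shell is crossed by at most $\ep n^{d-1}$ edges lying near bad blocks. Finite energy is then paid only on those $\ep n^{d-1}$ edges to equalize the boundary conditions induced by $\omega$ and $\omega^\bullet$ on the exterior (where $\mathcal F_A$ lives), and the exceptional probability $e^{-cn^d}$ is absorbed using the crude surface-order lower bound $\phi^0_{\L_n}[\mathcal F_A]\ge (p/(2-p))^{d|\partial\L_n|}$. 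The essential idea you are missing is that the disagreement between $\omega$ and $\omega^\bullet$, although of order $n^{d-1}$ at $\partial\L_{n/2}$, does not persist across all $\Theta(n)$ shells unless an event of volume-order cost occurs; exploiting this via the layered coupling is precisely what buys the arbitrary $\ep$.
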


\begin{proof} Fix $\beta>\beta_c$ and set $p=1-e^{-2\beta}>p_c$. By \eqref{eq:ES}, we must prove that $\phi_{\L_n^\bullet}^0[\mathcal F_A]\le\e^{\ep n^{d-1}}\phi_{\L_n}^0[\mathcal F_A]$. Note that since $\ep$ is arbitrary and since $\mathcal F_A$ occurs if all the edges on the boundary of $\L_n$ are open (and therefore has a probability larger than $(\tfrac p{2-p})^{d|\partial\L_n|}$), it suffices to show that 
\begin{equation}\label{eq:aa}\phi_{\L_n^\bullet}^0[\mathcal F_A]\le  \e^{-cn^d}+n\,\big(\tfrac{2-p}p\big)^{\ep n^{d-1}}\phi_{\L_n}^0[\mathcal F_A].\end{equation}
We do this by constructing a coupling $\Phi$ on pairs $(\omega,\omega^\bullet)$ of percolation configurations on $\{0,1\}^{E_n}$ for which $\omega\le \omega^\bullet$ have respective laws $\phi_{\L_n}^0$ and  $\phi_{\L_n^\bullet}^0$ (for consistency, we see a configuration on $\L_n^\bullet$ as a configuration on $E_n$ for which edges in $E_{n/2}$ are automatically open). 

Consider $k$  to be fixed later and assume that $k$ divides $n/2$  (one can trivially adapt the proof if $k$ does not). Also, set $T:=n/(4k)$. A block $\mathbf B$ is called {\em bad} if it is either not good or if $\omega_{|\mathbf B}\ne \omega^\bullet_{|\mathbf B}$.
For every $t\in[T,2T]$, define the three sets $C_t:=E_{2k t}$, $D_t$  the set of edges in $C_{t + 1}\setminus C_t$ within a distance of $2k$ of a bad block included in $C_t$, and $D'_t:=C_{t+1}\setminus (C_t\cup D_t)$; see Fig.~\ref{fig:Hs}. Note that $C_t$ is a deterministic set, but $D_t$ and $D'_t$ are random variables which are measurable in terms of $(\omega_{|C_t},\omega^\bullet_{|C_t})$.

Then, the coupling is constructed as follows. First, sample $\omega_{|E_{n/2}}$ according to $\phi_{\L_n}(\cdot_{|E_{n/2}})$ (recall that the edges of $\omega^\bullet$ are necessarily open in $E_{n/2}$). Then, for every $t\ge T$, 
\begin{itemize}[noitemsep]
\item Sample $\omega_{|D_t}\le\omega^\bullet_{|D_t}$ according to $\phi_{\L_n}^0(\cdot_{|D_t}|\omega_{|C_t})$ and $\phi^0_{\L_n^\bullet}(\cdot_{|D_t}|\omega^\bullet_{|C_t})$,
\item Sample $\omega_{|D'_t}\le\omega^\bullet_{|D'_t}$ according to $\phi_{\L_n}^0(\cdot_{|D'_t}|\omega_{|C_t\cup D_t})$ and $\phi^0_{\L_n^\bullet}(\cdot_{|D'_t}|\omega^\bullet_{|C_t\cup D_t})$.
\end{itemize}

For $t\in[T,2T]$, we call $\mathcal G_t$ the event that there are fewer than $\ep n^{d-1} $ edges in $D_t$.
The inequality \eqref{eq:aa} follows readily from the following two claims.
\paragraph{Claim 1.} {\em For every $t\ge T$, we have that }
$$\Phi[\{\omega^\bullet\in\mathcal F_A\}\cap\mathcal G_t]\le \big(\tfrac{2-p}p\big)^{\ep n^{d-1}}\Phi[\omega\in\mathcal F_A].$$

\paragraph{Claim 2.} {\em There exist $k$ and $c>0$ such that for every $n$ large enough,}
$$\Phi\big[\bigcap_{t\ge T}\mathcal G_t^c\big]\le \e^{-cn^d}.$$

To conclude, we only need to prove those claims.
\bigbreak\noindent{\em Proof of Claim 1.}
 \begin{figure}[!htb]
   \centering
    \includegraphics[width=.60\linewidth]{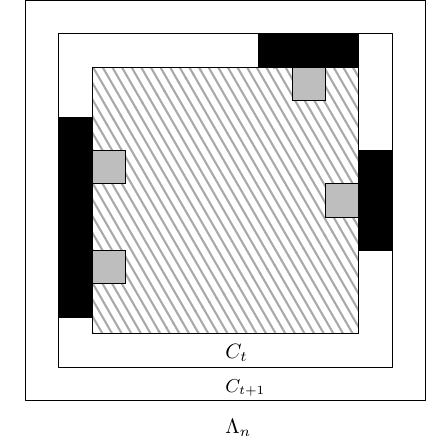}
    \caption{{\bf A realization of the event $\mathcal H_t$.} 
    The grey boxes are the bad blocks in $C_t$ that are adjacent to $C_{t + 1} \setminus C_t$. The dark rectangles represent the edges in $D_t$.}
    \label{fig:Hs}
  \end{figure} 
Let $\mathcal H_t$ be the event that every edge in $D_t$ is open. We refer the reader to Figure~\ref{fig:Hs} for an illustration. 
 On $\mathcal H_t\cap\mathcal G_t$, the boundary conditions on $E_n\setminus(C_t\cup D_t)$ induced by $\omega$ and $\omega^\bullet$ are the same. Since $\mathcal H_t\cap\mathcal G_t$ depends on edges in $C_t\cup D_t$ only, the joint Markov property of the coupling implies that $\omega_{|E_n\setminus (C_t\cup D_t)}=\omega^\bullet_{|E_n\setminus (C_t\cup D_t)}$ and $\omega^\bullet\in \mathcal F_A$ if and only if $\omega\in\mathcal F_A$. We deduce that 
\begin{equation}\label{eq:uh}\Phi[\{\omega^\bullet\in\mathcal F_A\}\cap\mathcal H_t\cap\mathcal G_t]=\Phi[\{\omega\in\mathcal F_A\}\cap\mathcal H_t\cap\mathcal G_t]\le\Phi[\omega\in\mathcal F_A].\end{equation}
Conditioned on $(\omega,\omega^\bullet)_{|C_t}$, the joint FKG inequality and the joint Markov  property  imply that 
\begin{equation}\label{eq:yyh}\Phi[\{\omega^\bullet\in\mathcal F_A\}\cap \mathcal H_t|(\omega,\omega^\bullet)_{|C_t}]\ge\Phi[\{\omega^\bullet\in\mathcal F_A\}|(\omega,\omega^\bullet)_{|C_t}]\Phi[\mathcal H_t|(\omega,\omega^\bullet)_{|C_t}].\end{equation}
If $(\omega,\omega^\bullet)_{|C_t}\in\mathcal G_t$, at most $\ep n^{d-1}$ edges must be open for $\mathcal H_t$ to occur.  We deduce that 
$$\Phi[\mathcal H_t|(\omega,\omega^\bullet)_{|C_t}]\ge \big(\tfrac{p}{2-p}\big)^{\ep n^{d-1}}.$$
Since $\mathcal G_t$ depends on edges in $C_t$ only, we may insert the previous estimate in \eqref{eq:yyh} and then integrate on $(\omega,\omega^\bullet)_{|C_t}\in\mathcal G_t$ to obtain that
$$\Phi[\{\omega^\bullet\in\mathcal F_A\}\cap \mathcal H_t\cap\mathcal G_t]\ge\Phi[\{\omega^\bullet\in\mathcal F_A\}\cap\mathcal G_t] \big(\tfrac{p}{2-p}\big)^{\ep n^{d-1}}.$$
Putting this inequality into \eqref{eq:uh} concludes the proof of Claim~1.
\bigbreak
\noindent{\em Proof of Claim 2.}
For no $\mathcal G_t$ to occur, there must be at least $\ep n^{d-1}/|E_{3k}|\times n/(4k)=:\ep_k|E_n|$ bad blocks. As a consequence, one of the following three things must happen:
\begin{itemize}[noitemsep,nolistsep]
\item there are more than $(\ep_k/2)|E_n|$ blocks in $\mathcal B_k(\L_n)$ that are not good.
\item the number of open edges in $\omega^\bullet_{|E_n\setminus E_{n/2}}$ is larger than $(\phi[\omega_e]+\ep_k/4)|E_n\setminus E_{n/2}|$.
\item the number of open edges in $\omega_{|E_n\setminus E_{n/2}}$ is smaller than $(\phi[\omega_e]-\ep_k/4)|E_n\setminus E_{n/2}|$.
\end{itemize}
Note that these three events are involving either $\omega$ or $\omega^\bullet$, so that we can now ignore the coupling $\Phi$.
We bound the probability of each one of these events separately.

For the first item, Remark~\ref{rmk:a} enables us to choose $k$ large enough that the process of good boxes dominates a Bernoulli percolation of parameter $p>1-\ep_k$. We deduce from large deviations for iid Bernoulli variables that the probability of the event decays as $e^{-cn^d}$ uniformly in $n$, where $c=c(k,\ep)>0$.

For the second item, the uniqueness of the infinite-volume measure $\phi$ implies that one may choose $K=K(k,\ep)$ large enough that 
$$\phi^1_{\L_K}\Big[\sum_{e\in E_K}\omega_e\Big]< (\phi[\omega_e]+\ep_k/4)|E_K|.$$
Now, consider a family of balls of size $K$ covering $\L_n$ and with disjoint interiors. By sampling the FK-Ising measure $\phi^1_{\L_K}$ ball by ball (here, the order in which the balls are sampled is irrelevant), the comparison between boundary conditions \eqref{eq:CBC} enables us to compare the number of open edges in $E_n\setminus E_{n/2}$ to a sum of independent random variables. The theory of large deviations implies that the probability of the second event is also bounded by $\exp(-cn^d)$ uniformly in $n$, where $c=c(K,k,\ep)>0$.

The third item follows from the same reasoning as the second one (except it does not involve the uniqueness of the infinite-volume measure).
\end{proof}

\subsection{Proof of Proposition~\ref{thm:mixing1}}

In the rest of the paper, for a current $\mb n$ on $\L_N$ or $\L_N^+$, we use $\mathcal C_{\mb n}(S)$ to denote the set of all vertices \emph{in} $\L_N$ which are connected to $S$ by $\mb n$. Notice that this is slightly different from the convention in Lemma~\ref{lem:00} where we also included $\mathfrak g$ in $\mathcal C_{\mb n}(S)$. 
\medbreak
From now on, we fix $N=N(n): = e ^{n^ \alpha}$ where $1 <\alpha < d-1$, and $ e =\{x,y\}$. Using \eqref{eq:ES} and a simple relation between the probability of $\omega_e=1$ and the probability that $x$ and $y$ are connected together, we find that 
\begin{align}
\label{eq:two_arms_bnd}
\phi_{\L_N}^1[\omega_e]-\phi_{\L_N}^0[\omega_e]  &\stackrel{}{=}\tfrac{p}2(\ising{\sigma_x \sigma_y}^+_{\L_N} -\ising{\sigma_x \sigma_y}_{\L_N}) \stackrel{\eqref{eq:switching2}}{\leq} \tfrac{p}2\ising{\sigma_x \sigma_y}^+_{\L_N}\,\P^{\{x, y\}, \emptyset}_{\L_N^+, \L_N}[\nlr{\L_N}{\mb n_1 + \mb n_2}{x}{y}].
\end{align}
Notice that $x$ and $y$ are the sources of $\mb n_1$ and as such they must be connected to 
each other by $\mb n_1$ in $\L_N^+$. Thus, when $x$ is not connected to $y$ in $\L_N$, there must be a path  between $x$ and $\partial \L_N$ in $\mathcal C_{\mb n_1}(x)$ on one side, and 
between $y$ and $ \partial \L_N$ in $\mathcal 
C_{\mb n_1}(y)$ on the other side. Since $\{x, y\} \in E_{N/4}$, the path emanating from $y$ must intersect at least $\tfrac12N/n$ 
blocks in $\mathcal B_n(\Lambda_{N})$. Among these blocks, there is always either $\tfrac14N/n$ many intersecting $\mathcal C_{\mb n_1 + \mb n_2}(x)$ or $\tfrac14N/n$ many not intersecting it. 
Repeating a similar argument for the path between $x$ and $\partial \L_N$, we obtain that if 
$x$ is not connected to $y$ in $\n_1+\n_2$ and we define the following events,
\begin{align*}
\mathcal C_{x,y} :=& \big\{\mbox{there exist } \tfrac14N/n\mbox{ blocks } {\bf B}\in \mathcal B_n(\Lambda_{N})\mbox{ such that }\mathcal C_{\mb n_1 + \mb n_2}(x) \cap {\bf B} = \emptyset\mbox{ and } \mathcal C_{\mb n_1}(y) \cap{\bf B}\\
& \mbox{ contains a vertex having at least two neighbors in }\mathcal C_{\mb n_1}(y)\big\},\\
\mathcal B_{x,y} :=& \{\mbox{there exist } \tfrac14N/n\mbox{ blocks } {\bf B}\in \mathcal B_n(\Lambda_{N})\mbox{ such that }\mathcal C_{\mb n_1 + \mb n_2}(x) \cap {\bf B} \ne \emptyset\mbox{ and } \mathcal C_{\mb n_1}(y) \cap{\bf B}\\
& \mbox{ contains a vertex having at least two neighbors in }\mathcal C_{\mb n_1}(y)\},
\end{align*}
then, either $\mathcal B_{x,y}$, $\mathcal B_{y,x}$ or $\mathcal C := \mathcal C_{x, y} \cap \mathcal C_{y, x}$ must occur. 

Therefore, it suffices to show that the intersection of the event on the right hand side of \eqref{eq:two_arms_bnd} with either $\mathcal C$, $\mathcal B_{x, y}$ or $\mathcal B_{y,x}$ has very small probability. This is the subject of the two lemmata below which are proved in the next section. 
\begin{lemma}
	\label{lem:gluing3}
	There exists $c>0$ such that for every $n$ large enough such that $\{x,y\}\in E_{N/4}$,
	$$\P^{\{x, y\}, \emptyset}_{{\L_N}^+, {\L_N}} [\mathcal C \cap \{\nlr{\L_N}{\mb n_1 + \mb n_2}{x}{y}\}] \leq \e^{-cn^{d-1}}\,.$$
\end{lemma}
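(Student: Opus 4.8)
The plan is to bound the probability of the bad event $\mathcal C \cap \{\nlr{\L_N}{\mb n_1 + \mb n_2}{x}{y}\}$ by combining the combinatorial structure encoded in $\mathcal C$ with the "well-connectedness" of the double random current furnished by Proposition~\ref{prop:connectivity}. The heuristic is that $\mathcal C$ forces the cluster $\mathcal C_{\mb n_1}(y)$ to reach deep into $\L_N$ (it has a vertex of degree $\ge 2$ in $\n_1$ inside $\tfrac14N/n$ distinct $n$-blocks) while staying disjoint from $\mathcal C_{\mb n_1 + \mb n_2}(x)$ on each of those blocks; but Proposition~\ref{prop:connectivity} says that from any fixed pair of $n$-boxes, the double current connects them with probability $1 - e^{-cn^{d-1}}$, so each "missed" block costs a factor $e^{-cn^{d-1}}$, and there are of order $N/n$ of them, giving a bound like $e^{-cn^{d-1}\cdot N/n}$, far smaller than needed.

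Concretely, I would first condition on $(\n_1,\n_2)$ restricted to a suitable region and use the conditional law on the complement. The key point is that the event $\{\mathcal C_{\mb n_1}(y)$ has a vertex with two $\n_1$-neighbors in some block $\mathbf B\}$ forces a current-path through $\mathbf B$ emanating from $y$; more precisely, having a vertex of degree $\ge 2$ in a block is essentially the statement that the block is "crossed" by the source-cluster of $y$, not just touched by a pendant edge. I would extract an ordered sequence of such blocks $\mathbf B_1,\dots,\mathbf B_s$ with $s$ of order $N/n$, visited successively by the path from $y$, and reveal the current cluster of $y$ block by block, using the switching lemma (specifically \eqref{eq:switching1} and the representation via $\P^{\{x,y\},\emptyset}$) together with the Griffiths/FKG monotonicity to control conditional probabilities. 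At each step, given what has been revealed so far, the conditional probability that the next block $\mathbf B_{i+1}$ is reached by $\mathcal C_{\mb n_1}(y)$ yet disjoint from $\mathcal C_{\mb n_1 + \mb n_2}(x)$ should be bounded — via Proposition~\ref{prop:connectivity} applied with $\L_n(x)$ playing the role of the source region and $\mathbf B_{i+1}$ the target, and a monotonicity/finite-energy argument to pass from the collapsed-vertex formulation to the genuine one — by $1 - (1 - e^{-cn^{d-1}}) = e^{-cn^{d-1}}$. Multiplying over the $\asymp N/n$ steps gives $e^{-c n^{d-1} N/n}$, and since $N = e^{n^\alpha} \gg n$, this is certainly $\le e^{-cn^{d-1}}$.

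I would organize the estimate as a union bound over the choice of which $\tfrac14 N/n$ blocks witness $\mathcal C$: there are at most $2^{|\mathcal B_n(\L_N)|} \le 2^{(2N/n)^d}$ such choices, and for each fixed choice I run the block-by-block revelation argument above to get a bound $e^{-c'n^{d-1}N/n}$. The entropy factor $2^{(N/n)^d}$ is dwarfed by $e^{-c'n^{d-1}N/n}$ provided $N/n$ is large, which holds since $N=e^{n^\alpha}$; carrying the constants through, the product is $\le e^{-cn^{d-1}}$ for $n$ large, as claimed. One subtlety is that $\mathcal C$ refers to both $\mathcal C_{\mb n_1}(y)$ (depending on $\n_1$ alone) and $\mathcal C_{\mb n_1 + \mb n_2}(x)$ (depending on the sum), so the revelation must be done carefully with respect to the joint law $\P^{\{x,y\},\emptyset}_{\L_N^+,\L_N}$; here I would reveal $\n_1$ and $\n_2$ together on each block and use that the conditional law of the currents outside the revealed region, with updated sources, is again a (two-current) random-current law to which the switching-lemma connection estimate applies.

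\textbf{Main obstacle.} The hard part will be making the "reveal one block at a time and apply Proposition~\ref{prop:connectivity}" step rigorous: Proposition~\ref{prop:connectivity} is a statement about a single double current $(\n_1,\n_2)$ with sources $\{w,z\}$ and $\emptyset$ on all of $\L_N$, whereas here we need a \emph{conditional} version, where we have already revealed the currents on previously visited blocks and we ask about connection from $\L_n(x)$ to a new block inside the unexplored region. Getting the conditional law into a form covered by Proposition~\ref{prop:connectivity} — i.e.\ identifying the conditioned configuration as a double random current on a subgraph with appropriate (even) sources induced on the boundary, and then using Griffiths' inequality \eqref{eq:Griffiths2} (monotonicity in coupling constants / boundary conditions) to dominate it by a clean double current — is where the real work lies, and is presumably exactly what Lemma~\ref{lem:gluing3}'s proof in the next section will spell out. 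The second subtlety, handling the mixed dependence of $\mathcal C$ on $\n_1$ versus $\n_1+\n_2$, is manageable but needs care in bookkeeping.
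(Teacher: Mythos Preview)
Your proposal has a genuine gap, and the paper takes a fundamentally different route.

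The conditional application of Proposition~\ref{prop:connectivity} that you flag as the main obstacle is indeed unresolved, and it is not merely bookkeeping. After revealing $(\mb n_1,\mb n_2)$ on an explored region $R$, the remaining currents live on the irregular domain $\L_N\setminus R$ with induced sources on $\partial R$; but Proposition~\ref{prop:connectivity} rests on Lemma~\ref{lem:boxcon}, an FK connectivity estimate \emph{in the full box $\L_N$}. On $\L_N\setminus R$, with $R$ a random and possibly tentacular exploration of $\mathcal C_{\mb n_1}(y)$, there is no such estimate---indeed $R$ may well separate $x$ from the next target block. Griffiths monotonicity \eqref{eq:Griffiths2} lets you \emph{increase} couplings (collapse boxes to a point), not delete arbitrary regions while preserving connection lower bounds, so ``dominate by a clean double current'' does not go through. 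As a secondary point, your union bound is wrong as written: the entropy $2^{(2N/n)^d}$ has exponent of order $(N/n)^d$ while your gain has exponent of order $n^{d-1}N/n$, and $(N/n)^{d-1}\ll n^{d-1}$ would require $N\ll n^2$, which fails badly since $N=e^{n^\alpha}$. (A union bound only over subsets of size $\tfrac14N/n$ would repair this, since then the entropy exponent is of order $(N/n)\log(N/n)\sim (N/n)n^\alpha\ll (N/n)n^{d-1}$; but the claim as stated is false.)

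The paper avoids conditioning altogether via a multi-valued map argument (Lemma~\ref{lem:mvmp}). Proposition~\ref{prop:connectivity} is applied once, globally, to restrict to the event $\mathcal E$ that every pair of blocks in $\mathcal B_n(\L_N)$ is connected in $\mb n_1+\mb n_2$. On $\mathcal C\cap\{x\nleftrightarrow y\}\cap\mathcal E$, for each choice of a subset $Z$ of $\delta m$ strongly disjoint blocks (with $m\asymp N/n$), one surgically modifies the currents near short geodesic paths $\Pi_{\bf B}$ (Lemmata~\ref{lem:gluing} and~\ref{lem:reconstruct}) so that the $x$- and $y$-clusters become connected at precisely the blocks of $Z$, in a way that allows $Z$ to be read back from the modified pair. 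The entropy $\binom{m}{\delta m}$ from the choice of $Z$ beats the weight distortion $e^{C\delta mn}$ after tuning $\delta=e^{-2Cn}$, and Lemma~\ref{lem:mvmp} yields the bound. This is configuration-space surgery rather than Markovian exploration, and it is precisely designed to sidestep the conditional-law difficulty you ran into.
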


\begin{lemma}
\label{lem:gluing2}
There exists $c>0$ such that for every $n$ large enough such that $\{x,y\}\in E_{N/4}$,
$$\P^{\{x,y\}, \emptyset}_{\L_N^+, \L_N}[\mathcal B_{x,y} \cap \{\nlr{\L_N}{\mb n_1 + \mb n_2}{x}{y}\}] \leq \e^{-cn^{d-1}}\,.$$
\end{lemma}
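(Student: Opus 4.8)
\textbf{Plan of proof for Lemma~\ref{lem:gluing2}.}
The plan is to exploit the defining feature of the event $\mathcal B_{x,y}$: it asserts the existence of $\tfrac14 N/n$ blocks ${\bf B}\in\mathcal B_n(\L_N)$ each of which is simultaneously ``touched'' by the cluster $\mathcal C_{\mb n_1+\mb n_2}(x)$ and by a vertex of $\mathcal C_{\mb n_1}(y)$ that has degree at least two in $\mathcal C_{\mb n_1}(y)$. The presence of a degree-$\ge 2$ vertex of $\mathcal C_{\mb n_1}(y)$ in such a block ${\bf B}$ means there is a piece of the $y$-cluster living in (a neighborhood of) ${\bf B}$ which is \emph{not} a dead end -- it carries a nontrivial sub-current, which is exactly the kind of structure on which a switching/sourceless-current argument can be run locally. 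The key point is that on the event $\{\nlr{\L_N}{\mb n_1+\mb n_2}{x}{y}\}$ the two clusters $\mathcal C_{\mb n_1+\mb n_2}(x)$ and $\mathcal C_{\mb n_1}(y)$ are disjoint, so in each of these $\tfrac14 N/n$ blocks we see the $x$-cluster passing nearby, a nontrivial piece of the $y$-cluster, and yet no connection between the two inside a box of doubled size. Using Proposition~\ref{prop:connectivity} (applied with the sources $\{x,y\}$ relocated appropriately, or rather: after conditioning, the relevant duplicated current in a sub-box has no sources there and Proposition~\ref{prop:connectivity} guarantees that two sub-blocks get connected with probability $1-\e^{-cn^{d-1}}$), each such local non-connection costs a factor $\e^{-c n^{d-1}}$.

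The execution I have in mind proceeds as follows. First, I would select, among the $\tfrac14 N/n$ blocks produced by $\mathcal B_{x,y}$, a sub-collection of $m:=c' N/n$ blocks ${\bf B}_1,\dots,{\bf B}_m$ that are pairwise well-separated, say at $\ell^\infty$-distance at least $3n$ from one another (this is possible with $c'$ a dimensional constant since the blocks live in $\L_N$ and each block forbids only a bounded number of others from being chosen). This separation ensures that the enlarged boxes $\L_n({\bf B}_i)$ are disjoint, so that the conditional estimates in different boxes can be multiplied. Then I would reveal the currents $(\mb n_1,\mb n_2)$ outside $\bigcup_i E_n({\bf B}_i)$, together with the clusters $\mathcal C_{\mb n_1+\mb n_2}(x)$ and $\mathcal C_{\mb n_1}(y)$ restricted to that region, in a suitable order, and argue that conditionally on this information the current inside each remaining box $E_n({\bf B}_i)$ is (a) sourceless, or has sources only on $\partial\L_n({\bf B}_i)$, and (b) still sees a piece of the $x$-cluster on one side and a nontrivial (degree $\ge 2$) piece of the $y$-cluster on the other. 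Conditioned on that, the event that these two pieces fail to be joined inside $E_n({\bf B}_i)$ is, by the switching lemma and Proposition~\ref{prop:connectivity} (or a direct sourceless-current connectivity bound of the same flavour as \eqref{eq:collapsed_dc1}--\eqref{eq:k}), at most $\e^{-cn^{d-1}}$. Multiplying over the $m$ boxes yields the bound $\big(\e^{-cn^{d-1}}\big)^{c'N/n}$, which — since $N=e^{n^\alpha}$ with $\alpha>1$ — is far smaller than the claimed $\e^{-cn^{d-1}}$; a crude union bound over the (at most $|\L_N|^m$ or so) choices of the block collection is absorbed by the double-exponential gap.

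The subtle point, and the step I expect to be the main obstacle, is the conditioning argument: one must reveal enough of the configuration to \emph{freeze} the relevant features (that the $x$-cluster and a non-dead-end piece of the $y$-cluster both enter each chosen box, and that they are disjoint globally) while leaving the current inside each box ${\bf B}_i$ still ``free enough'' — i.e.\ governed by a measure to which the switching lemma / Proposition~\ref{prop:connectivity} applies with good constants, uniformly over the conditioning. This is exactly the type of delicate ``exploration of a random current'' argument (reveal the cluster of a source vertex edge-by-edge, then the complement is a sourceless current on the unexplored graph) that underlies the switching lemma itself; here it has to be carried out in parallel in many disjoint boxes and combined with the FK-Ising connectivity input of Lemma~\ref{lem:boxcon}. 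Handling the ``degree $\ge 2$'' requirement cleanly — making sure the piece of $\mathcal C_{\mb n_1}(y)$ we keep really carries a current that can be switched, rather than being an artifact that disappears under conditioning — is where the care is needed, and I expect the detailed proof in Section~5 to spend most of its effort there.
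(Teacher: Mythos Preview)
Your proposed approach --- revealing the currents outside a collection of well-separated boxes and then multiplying local non-connection probabilities --- is genuinely different from what the paper does, and it contains a real gap.

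The paper's proof of Lemma~\ref{lem:gluing2} is a one-line reduction to the argument for Lemma~\ref{lem:gluing3}: the proof is the same, with $S=\{x\}$ in place of $S$ a block. That argument is \emph{not} a conditioning/exploration argument. It is a multi-valued map (surgery) argument based on Lemma~\ref{lem:mvmp}. For every $(\mb n_1,\mb n_2)$ in the bad event one selects a $\delta m$-element subset $Z$ of the $m$ available strongly disjoint blocks and \emph{modifies} the currents via Lemma~\ref{lem:reconstruct}: in each ${\bf B}\in Z$ a short path $\Pi_{\bf B}$ from $\mathcal C_{\mb n_1}(y)$ to $\mathcal C_{\mb n_1+\mb n_2}(x)$ is opened, the neighbouring edges are closed, and the spurious sources are rerouted through the set $S_{\bf B}$ provided by Lemma~\ref{lem:gluing}. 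Property~(d) of Lemma~\ref{lem:reconstruct} guarantees that the endpoints $\{v^{\bf B}_{k_{\bf B}}:{\bf B}\in Z\}$ --- and hence $Z$ --- can be read off from the modified pair $(\mb n_1',\mb n_2')$, so the map is essentially injective in $Z$. The estimate then comes from comparing $\binom{m}{\delta m}$ choices of $Z$ against a finite-energy cost $e^{C\delta m n}$ for the surgeries, with $\delta=e^{-2Cn}$. Proposition~\ref{prop:connectivity} enters only once, to dispose of the complementary event $\mathcal E^c$.

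The gap in your plan lies in the per-box estimate. After revealing the currents outside the boxes, the conditional law inside each ${\bf B}_i$ is indeed a product of random currents with boundary sources, but the traces $X_i,Y_i\subset\partial{\bf B}_i$ of the $x$- and $y$-clusters can be \emph{single vertices} (the degree-$\ge 2$ condition only produces three vertices). Proposition~\ref{prop:connectivity} and the FK input Lemma~\ref{lem:boxcon} behind it yield $1-e^{-cn^{d-1}}$ only for the connection of two \emph{boxes of side $n$}; for two arbitrary boundary points in a box of side $n$ no bound better than a fixed constant strictly less than $1$ is available, and indeed such a bound cannot hold uniformly. So the factor $e^{-cn^{d-1}}$ you want to extract from each box is simply not there, and the union bound over block collections is not absorbed. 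This is precisely why the paper forces the connection by hand and pays the (much smaller) finite-energy price $e^{Cn}$ per block, rather than attempting to show the connection was likely.

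A secondary correction: your interpretation of the degree-$\ge 2$ requirement is off. It is not there to enable a local switching. As noted in the Remark following Lemma~\ref{lem:gluing}, that hypothesis is used only inside the geometric construction of $\Pi_{\bf B}$, in Cases~2-c and~4 where $d_1\le 3$: a second neighbour of $u$ in $\mathcal C_{\mb n_1}(y)$ is needed to produce an alternative endpoint for the path when the natural candidates are obstructed by $\mathcal C_{\mb n_1+\mb n_2}(S)$.
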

Note that the bounds given below imply Proposition~\ref{thm:mixing1} since $N={\rm e} ^{n^ \alpha}$ gives that the bounds on the right are of the form $\exp[-(\log N)^{(d-1)/\alpha}]$, and that $\alpha<d-1$.

\section{Proofs of Lemmata~\ref{lem:gluing3} and \ref{lem:gluing2}}
In the remainder of the paper, we will use $C$ and $c$ to denote finite, positive constants depending on at most $\beta$ and $d$. The values of these constants 
may vary from one line to the next. It might be helpful to think of $C$ and $c$ as large and small
positive numbers respectively.

In order to prove these lemmata, we will use a \emph{multi-valued map principle}.
\begin{lemma}
\label{lem:mvmp}
Consider a probability space $(\mathcal S, \mathfrak P(\mathcal S), \mu)$ where $\mathcal S$ is at most countable and $\mathfrak P(\mathcal S)$ is the set of all subsets of 
$\mathcal S$\,. Let $A, B \subset \mathcal S$ and $\mathcal R \subset A \times B$ be a relation satisfying the following two properties:
\begin{itemize}[noitemsep]
\item[(i)] $|\mathcal R(s)| \geq K$ for every $s \in A$
where $\mathcal R(s) := \{s' \in B:(s, s') \in \mathcal R  \}$.
\item[(ii)] $\sum_{s \in \mathcal R^{-1}(s')} \mu(s)\leq k \mu(s')$ for every $s' \in B$ where $\mathcal R^{-1}(s') := \{s \in A: (s,s') \in \mathcal R  \}$.
\end{itemize}
Then, we have that
$$\mu(A) \leq \frac{k}{K}\mu(B)\,.$$
\end{lemma}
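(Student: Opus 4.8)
The statement is a clean combinatorial double-counting inequality, so I would prove it by summing $\mu$ over $A$ in two different ways using the relation $\mathcal R$. The key observation is that counting pairs $(s,s')\in\mathcal R$ weighted by $\mu(s)$ can be organized either by first fixing $s\in A$ (and then summing over the $\mathcal R(s)$ partners, which there are at least $K$ of) or by first fixing $s'\in B$ (and then summing over the preimage $\mathcal R^{-1}(s')$, whose total $\mu$-mass is at most $k\mu(s')$).

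\textbf{Key steps.} First, for each $s\in A$, condition (i) gives $|\mathcal R(s)|\ge K$, hence
\[
K\,\mu(A)=K\sum_{s\in A}\mu(s)\le \sum_{s\in A}|\mathcal R(s)|\,\mu(s)=\sum_{s\in A}\sum_{s'\in\mathcal R(s)}\mu(s).
\]
Second, I would swap the order of summation. Since $\{(s,s'):s\in A,\ s'\in\mathcal R(s)\}=\mathcal R=\{(s,s'):s'\in B,\ s\in\mathcal R^{-1}(s')\}$, the double sum equals
\[
\sum_{s'\in B}\sum_{s\in\mathcal R^{-1}(s')}\mu(s).
\]
Third, apply condition (ii) to each inner sum to bound this by $\sum_{s'\in B}k\,\mu(s')=k\,\mu(B)$. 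Chaining the three displays gives $K\,\mu(A)\le k\,\mu(B)$, i.e.\ $\mu(A)\le\frac{k}{K}\mu(B)$, as desired. (The countability of $\mathcal S$ is exactly what licenses the interchange of the order of summation, since all terms are nonnegative and Tonelli applies; one should also note the trivial case $K=0$, which makes the claim vacuous or forces $\mu(A)=0$ via condition (i) being unsatisfiable unless $A=\emptyset$.)

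\textbf{Main obstacle.} There is essentially no obstacle here: the only thing to be careful about is bookkeeping with the interchange of summations when $\mathcal S$ is countably infinite, which is handled by nonnegativity, and making sure the multiplicities $|\mathcal R(s)|$ and the weighted preimage masses are used on the correct side of the inequality. The substantive difficulty of the paper lies entirely in \emph{constructing} the relation $\mathcal R$ with good constants $k$ and $K$ when this lemma is later applied to the current configurations in Lemmata~\ref{lem:gluing3} and~\ref{lem:gluing2}; the lemma itself is just the elementary counting engine.
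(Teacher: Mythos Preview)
Your proposal is correct and follows exactly the same double-counting argument as the paper's proof, which is the single chain of inequalities $K\mu(A)\le\sum_{(s,s')\in\mathcal R}\mu(s)=\sum_{s'\in B}\sum_{s\in\mathcal R^{-1}(s')}\mu(s)\le k\mu(B)$. Your additional remarks on Tonelli and the degenerate case $K=0$ are fine but not needed for the paper's purposes.
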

\begin{proof}
This is a simple application of ``counting in two ways'':
\begin{equation*}
\label{eq:mvmp1}
K\mu(A) \overset{(i)}\leq \sum_{(s, s') \in \mathcal R} \mu(s) = \sum_{s' \in B}\sum_{s \in \mathcal R^{-1}(s')}\mu(s)\overset{(ii)}\leq \sum_{s' \in B}k\mu(s') = k\mu(B)\,.  \qedhere
\end{equation*}
\end{proof}
Before we state the lemmata that are needed to invoke Lemma~\ref{lem:mvmp}, let us discuss the motivation behind them. Let us take the example of the proof of Lemma~\ref{lem:gluing3} (Lemma~\ref{lem:gluing2} is very similar).  Our goal is to apply the previous lemma for $A$ being the event under consideration in the lemma, and $B$ the full space of pairs of currents. Since a relation can be viewed as a multi-valued map, we can specify a relation on pairs of currents by describing several ways of modifying a given pair of currents $(\mb n_1, 
\mb n_2)$. In order to do so, we first perform the following two steps:
\begin{itemize}
	\item Select a family $Z$ of blocks ${\bf B}\in \mathcal B_n(\L_{N})$ that all intersect both $\mathcal C_{\mb n_1}(y)$ and $\mathcal C_{\mb n_1 + \mb n_2}(x)$. The number of choices for $Z$ will guarantee that $K$ is large.
	\item For each block ${\bf B}$, change the value of $\mb n_2$ along the edges of a shortest path $\Pi_{\bf B}$ between $\mathcal C_{\mb n_1}(y) \cap {\bf B}$ and $\mathcal C_{\mb n_1 + \mb n_2}(x) \cap{\bf B}$ so that these two sets become connected in ${\bf B}$ by the resulting pair of currents.
\end{itemize}
Let us call the new pair of currents $(\mb n_1', \mb n_2')$ (note that $\mb n_1' = \mb n_1$ at this stage). We want to be able to recover $Z$ from $(\mb n_1', \mb n_2')$ to ensure a small value of $k$ in Property~(ii) of Lemma~\ref{lem:mvmp} (called the \emph{reconstruction step} below). Since $x$ and $y$ are not connected in $\L_N$ by $\mb n_1 + \mb n_2$, a natural guess for $Z$ would include any block ${\bf B}$ containing a vertex $v \in \mathcal C_{\mb n_1}(y)$ which is connected to $x$ by $\n'_1+\n'_2$ through a path in 
$(\L_N\setminus \mathcal C_{\mb 
		n_1'}(y)) \cup \{v\}$.
Unfortunately, this guess may not be correct because changing $\mb n_2$ could have created many such 
vertices $v$ apart from the endpoints of the paths $\Pi_{\bf B}$. 

One way to address this problem is to additionally (and brutally) require that $(\mb n_1' + \mb n_2')(e) = 0$ for all the 
edges $e$ adjacent to the paths $\Pi_{\bf B}$ except those adjacent to their endpoints, but this has the disadvantage of 
introducing new sources to the currents. In order to remove these sources, we change the value of $(\mb n_1', \mb n_2')$ along some new paths connecting pairs of 
sources. These new paths should avoid the paths $\Pi_{\bf B}$ except, possibly, their endpoints in $\mathcal C_{\mb n_1}(y)$ and $\mathcal C_{\mb n_1 + \mb n_2}(x)$ so that we do not create any additional connections between $\mathcal C_{\mb 
	n_1}(y)$ and $x$. Furthermore, we want to find these paths in neighborhoods of fixed radius around the paths $\Pi_{\bf B}$ to prevent that the value of $(\mb n_1, \mb n_2)$ is changed on too many edges as that would, again, give a large value of $k$ in the 
reconstruction step. 

Our next lemma shows that it is always possible to find such paths within a graph distance of at most 2 from the paths $\Pi_{\bf B}$ 
for some specific choices of the latter. Due to purely technical reasons which we will leverage in our proof of Lemma~\ref{lem:gluing3}, we prove this lemma for $\mathcal C_{\mb n_1 + \mb n_2}(S)$ (with $S$ an arbitrary set) instead of $\mathcal 
C_{\mb n_1 + \mb n_2}(x)$. Without loss of generality, we implicitly assume in the rest of this paper that the size 
parameters $N$ and $n$ are integer powers of $2$.  Also we will use \emph{distance} for the $\ell_\infty$ distance and \emph{graph distance} for the $\ell_1$ distance on 
$\Z^d$. We say that $A$ is connected in $B$ if any two vertices of $A$ can be connected by a path of vertices in $B$. We refer the reader to Figure \ref{fig:lem:gluing} for a pictorial description of Lemma~\ref{lem:gluing}. 
\begin{lemma}
\label{lem:gluing}
Let $S \subset \L_N$ be such that $\mathcal C_{\mb n_1}(y) \cap \mathcal C_{\mb 
n_1 + \mb n_2}(S) = \emptyset$. Assume that there exists a block ${\bf B}\in\mathcal B_n(\Lambda_N)$ intersecting $\mathcal C_{\mb n_1 + \mb n_2}(S)$  and containing a  vertex  
in $\mathcal C_{\mb n_1}(y)$ with at least two neighbors in $\mathcal C_{\mb n_1}(y)$. 
Then, there exists a path $\Pi_{\bf B}=\Pi_{\bf B} (\n_1,\n_2)=(v_0, v_1, \ldots, v_{k})$ satisfying that
\begin{itemize}
\item
$v_0\in \mathcal C_{\mb n_1 + \mb n_2}(S)$ is within a distance of at most $3dn$ of the center of ${\bf B}$,   
\item $v_{k} \in \mathcal C_{\mb n_1}(y) $ is within a distance of at most $3dn$ of the center of ${\bf B}$, 
\item  $\Pi_{\bf B}$ is a shortest path between $v_0$ and $v_k$,
\item For every $0< i <k$, $v_i \notin \mathcal C_{\mb n_1}(y) \cup \mathcal C_{\mb n_1 + \mb n_2}(S)$,
\item The set
$T_{\bf B} = T_{\bf B}(\mb n_1, \mb n_2)$ of vertices in $\Lambda_N\setminus \mathcal C_{\mb n_1 + \mb n_2}(S)$ at a graph distance exactly 1 of $\Pi_{\bf B}\setminus\{v_0\}$ is connected in
the set $S_{\bf B}=S_{\bf B}(\mb n_1, \mb n_2)$ of vertices of $\Lambda_N\setminus \mathcal C_{\mb n_1 + \mb n_2}(S)$  which are either equal to $v_k$ or at a graph distance 1 or 2 of $\Pi_{\bf B}$.
\end{itemize}
\end{lemma}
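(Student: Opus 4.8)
The plan is to construct the path $\Pi_{\bf B}$ greedily while keeping careful track of a ``tube'' around it, and then verify the connectivity of the set $T_{\bf B}$ inside $S_{\bf B}$ by a local, purely geometric argument in $\Z^d$. First I would fix the block ${\bf B}$ and the vertex $u\in\mathcal C_{\mb n_1}(y)\cap{\bf B}$ with at least two neighbors in $\mathcal C_{\mb n_1}(y)$; then I choose $v_0$ to be a vertex of $\mathcal C_{\mb n_1+\mb n_2}(S)\cap{\bf B}$ and $v_k=u$, and take $\Pi_{\bf B}$ to be \emph{any} shortest $\ell_1$-path from $v_0$ to $v_k$ \emph{subject to} the side condition that its interior vertices avoid $\mathcal C_{\mb n_1}(y)\cup\mathcal C_{\mb n_1+\mb n_2}(S)$. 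The point requiring care is that a priori a shortest path might be forced to re-enter $\mathcal C_{\mb n_1}(y)$; but since $v_k\in\mathcal C_{\mb n_1}(y)$, $v_0\in\mathcal C_{\mb n_1+\mb n_2}(S)$, and the two clusters are disjoint by hypothesis, I would argue that among all shortest paths one can select one whose interior leaves both clusters — if a shortest path hits $\mathcal C_{\mb n_1}(y)$ at an interior vertex $w\ne v_k$, one can reroute, or simply replace $v_0$ by a later point; the bound $3dn$ on the distances comes from both endpoints lying in (a bounded neighborhood of) the block ${\bf B}$ of side $n$, so a shortest path between them has length at most $dn$ and stays within distance $\le 3dn$ of the center.

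Next I would address the last and genuinely substantive bullet: that $T_{\bf B}$, the graph-distance-$1$ neighbors of $\Pi_{\bf B}\setminus\{v_0\}$ lying outside $\mathcal C_{\mb n_1+\mb n_2}(S)$, is connected inside $S_{\bf B}$, the set of vertices at graph distance $\le 2$ of $\Pi_{\bf B}$ (plus $v_k$) also outside that cluster. The natural approach is \emph{induction along the path}: write $\Pi_{\bf B}=(v_0,\dots,v_k)$ and show that the ``local slab'' of neighbors around each consecutive pair $v_i,v_{i+1}$ is connected within distance $2$, and that consecutive such slabs overlap. Concretely, for an edge $\{v_i,v_{i+1}\}$ of the path, the common neighbors and near-neighbors of $v_i$ and $v_{i+1}$ form a connected ``ladder'' in $\Z^d$ (this is where $d\ge 3$, or at least $d\ge 2$, and the structure of the lattice matters: around an edge of $\Z^d$ there are $2(d-1)$ sites at distance $1$ from both endpoints, and they are mutually connected via distance-$2$ vertices). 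The only obstruction to this local connectivity is that some of these vertices might lie in $\mathcal C_{\mb n_1+\mb n_2}(S)$ and hence be deleted; here I would use that $v_0$ is the \emph{only} vertex of $\Pi_{\bf B}$ in $\mathcal C_{\mb n_1+\mb n_2}(S)$, so along $\Pi_{\bf B}\setminus\{v_0\}$ the path itself is available as a ``backbone'' inside $S_{\bf B}$, and every vertex of $T_{\bf B}$ is adjacent to the backbone; a vertex of $\mathcal C_{\mb n_1+\mb n_2}(S)$ adjacent to $v_i$ cannot disconnect the remaining near-neighbors of $v_i$ from each other through distance-$2$ detours because in $\Z^d$ with $d\ge 2$ removing one vertex from the $\ell_\infty$-ball of radius $1$ around a point leaves the rest connected within radius $2$. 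This is the step I expect to be the main obstacle: turning the informal ``the lattice near a path is locally connected'' into a clean inductive statement that survives the deletion of the obstructing cluster $\mathcal C_{\mb n_1+\mb n_2}(S)$, and in particular handling the endpoint $v_k\in\mathcal C_{\mb n_1}(y)$ and the transition at $v_1$ (which is adjacent to $v_0\in\mathcal C_{\mb n_1+\mb n_2}(S)$).

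To organize the induction cleanly I would define, for $1\le i\le k$, the set $N_i$ of vertices in $\Lambda_N\setminus\mathcal C_{\mb n_1+\mb n_2}(S)$ at graph distance exactly $1$ from $v_i$, and prove by downward/upward induction on $i$ that $N_i\cup\{v_i\}$ is connected in $S_{\bf B}$ and that $(N_i\cup\{v_i\})$ and $(N_{i+1}\cup\{v_{i+1}\})$ lie in a common connected piece of $S_{\bf B}$; since $v_i$ and $v_{i+1}$ are lattice-adjacent and not in $\mathcal C_{\mb n_1+\mb n_2}(S)$ (for $i\ge 1$), the edge $v_iv_{i+1}$ witnesses the overlap, while $v_k\in S_{\bf B}$ by fiat and $v_k=u$ has two neighbors in $\mathcal C_{\mb n_1}(y)$ which are irrelevant here since we only need connectivity \emph{within} $S_{\bf B}$. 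Finally $T_{\bf B}=\bigcup_{i\ge 1}N_i$ (minus possibly the neighbor $v_0$, which is excluded as $v_0\in\mathcal C_{\mb n_1+\mb n_2}(S)$), so connectivity of $T_{\bf B}$ inside $S_{\bf B}$ follows by chaining the $N_i$'s. I would state the needed purely combinatorial fact about $\Z^d$ — ``for any $v$ and any single vertex $w$ with $\|v-w\|_1=1$, the set $\{z:\|z-v\|_\infty\le 1\}\setminus\{w\}$ is connected using only vertices $z$ with $\|z-v\|_\infty\le 1$, and remains so after further deleting any one more such $w'$'' — as a small sublemma and check it by the explicit description of the $\ell_\infty$-ball of radius $1$, which is where $d\ge 2$ is used. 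The remaining four bullets of the lemma are then immediate from the construction of $\Pi_{\bf B}$.
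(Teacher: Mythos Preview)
Your outline has the right overall architecture---local connectivity along the path, chained inductively---but it underestimates the obstruction from $\mathcal C_{\mb n_1+\mb n_2}(S)$ near the starting end of the path, and several of your concrete claims are false.

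First, your proposed sublemma (``removing one or two vertices from the $\ell_\infty$-ball of radius $1$ keeps it connected'') is not the relevant statement, and your claim that $d\ge 2$ suffices is wrong: for a straight segment of $\Pi_{\bf B}$ in $\Z^2$ the two off-path neighbours of $v_t$ lie on opposite sides and cannot be joined inside $N_2(v_t)\setminus\Pi_{\bf B}$. The correct local fact is that $N_1(v_t)\setminus\Pi_{\bf B}$ is connected in $N_2(v_t)\setminus\Pi_{\bf B}$ for a \emph{geodesic} $\Pi_{\bf B}$, and this genuinely needs $d\ge 3$. Also, your ``backbone'' idea fails as stated: the interior vertices $v_1,\dots,v_{k-1}$ are at graph distance $0$ from $\Pi_{\bf B}$ and hence do not belong to $S_{\bf B}$, so you cannot route through them.

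The more serious gap is the following. You take $v_0$ to be an \emph{arbitrary} point of $\mathcal C_{\mb n_1+\mb n_2}(S)\cap{\bf B}$ and then a shortest $v_0$--$u$ path. But then nothing prevents $\mathcal C_{\mb n_1+\mb n_2}(S)$ from having many vertices in $N_1(v_1)$ or $N_2(v_1)$, not just $v_0$; your ``remove one or two vertices'' scheme cannot handle this. The paper's proof avoids this by choosing $v_0$ to realise the \emph{minimum} $\ell_1$-distance from $u$ to the cluster (within a box). This minimality is what guarantees that for $t\ge 3$ the whole ball $N_2(v_t)$ is disjoint from $\mathcal C_{\mb n_1+\mb n_2}(S)$ (otherwise there would be a shorter path), reducing the problem to the purely combinatorial local connectivity above. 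For $t=1,2$ this argument is unavailable and the paper does an explicit case analysis on $d_\infty=\|u-v_0\|_\infty$ and $d_1$; in several subcases the initial geodesic $\Pi'$ must be \emph{modified} to make the fifth item hold, and in one subcase (their Case~2-c) the hypothesis that $u$ has at least two neighbours in $\mathcal C_{\mb n_1}(y)$ is used to find a better pair of endpoints. Your proposal neither exploits the distance-minimising choice of $v_0$, nor allows for modifying $\Pi_{\bf B}$, nor invokes the two-neighbour hypothesis, so as written it cannot close the argument near $v_1$ and $v_2$.
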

 \begin{figure}[!htb]
   \centering
    \includegraphics[width=0.80\linewidth]{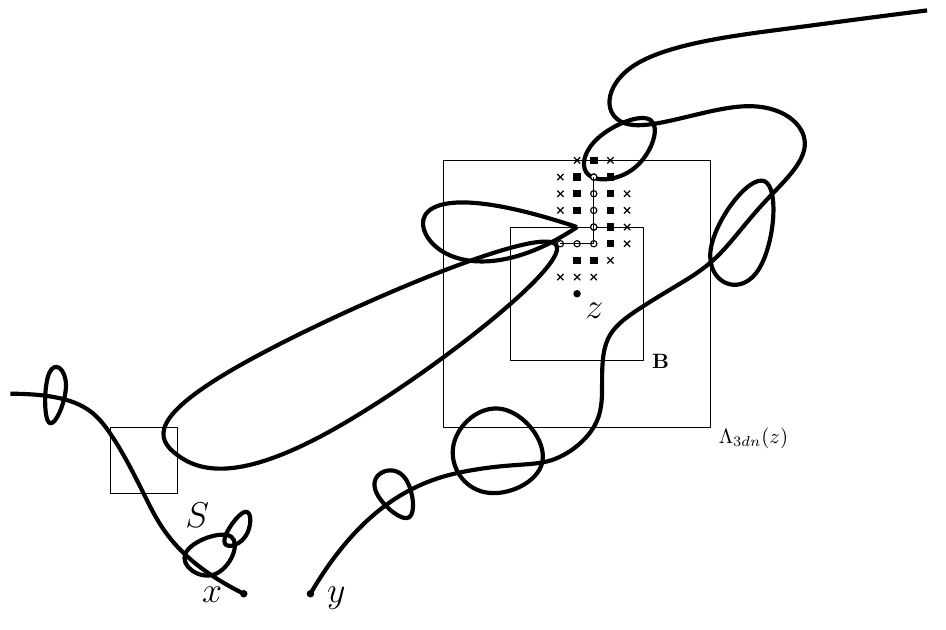}
    \caption{{\bf A schematic version of Lemma~\ref{lem:gluing}.} The thick lines represent the edges in the multigraph underlying $\mb n_1 + \mb n_2$ whereas the thin line represents the path $\Pi_{\bf B}=(v_0,\dots,v_k)$. We also depicted the sets $T_{\bf B}$ (filled squares) and $S_{\bf B} \setminus T_{\bf B} \cup \{v_k\}$ (crosses).}
    \label{fig:lem:gluing}
  \end{figure}
 We will prove Lemma~\ref{lem:gluing} (which is fairly technical but quite clear conceptually) at the end of the paper. With this lemma at our disposal, we can now formally describe the modification of $(\mb n_1, \mb n_2)$ that we discussed earlier. This is the content of the following lemma. 
 
 {\begin{remark}
 The requirement that the block ${\bf B}$ contains a vertex of $\mathcal C_{\n_1}(y)$ having two neighbors in $\mathcal C_{\n_1}(y)$ is used in this lemma and in this lemma only.
 \end{remark}}
 We introduce a few notation. From now on, we say that two blocks ${\bf B}$ and ${\bf B'}$ in $\mathcal B_n(\Lambda_N)$ are {\rm strongly disjoint} if their centers are at a distance of $7dn$ of each other. 
 Also, let $N_r(S)$ be the set of vertices of $\Z^d$ within a graph distance at most $r$ of $S$, and $E_r(S)$ be the set of edges with both endpoints in $N_r(S)$. 

 \begin{lemma}
	\label{lem:reconstruct}
	Assume that $\partial \mb n_1 = \{x, y\}$ and $\partial \mb n_2 = \emptyset$.  
	Let $ S \subset \Lambda_N$ be such that  $\mathcal C_{\mb n_1 + \mb n_2} (x) \cap S \neq \emptyset$ and  ${\mathcal C_{\mb n_1 + \mb n_2} (y)} \cap S = \emptyset$.
	Let $Z$ be a family of strongly disjoint blocks of $\mathcal B_n(\L_{N})$ with the property that every ${\bf B}\in Z$ intersects $\mathcal C_{\mb n_1 + \mb n_2}(S)$ and contains a vertex of $\mathcal C_{\mb n_1}(y)$ having at least two neighbors in $\mathcal C_{\mb n_1}(y)$.

	Then, there exists a new pair of currents $(\mb n_{1}', \mb n_{2}')$ (which is a function of $\n_1$, $\n_2$ and $Z$) with the following properties. If the paths $\Pi_{\bf B} = \Pi_{\bf B}(\mb n_1, \mb n_2) = (v_0^{\bf B}, v_1^{\bf B}, \ldots, v_{k_{\bf B}}^{\bf B})$ are given by Lemma~\ref{lem:gluing} above, we have that
	\begin{enumerate}[label = (\alph*)]
		\item
		$\partial \mb n_1' = \partial \mb n_1$ and $\partial \mb n_2' = \partial \mb n_2$.
		\item
		$(\mb n_1, \mb n_2)(e)\ne(\mb n_1', \mb n_2')(e) $ implies that $e\in E_2(\Pi_{\bf B})$ for some ${\bf B}\in Z$.
		\item $(\mb n_1, \mb n_2)(e)\ne(\mb n_1', \mb n_2')(e) $ implies that $\mb n_j'(e) \leq \mb n_j(e)$ or  $\mb n_j'(e) \leq 2$ for $j=1,2$.
		\item 
		The set of vertices $v \in  \mathcal C_{\mb n_{1}'}(y)$ that are connected by $\n'_1+\n'_2$ to $S$  in $(\L_N \setminus \mathcal C_{\mb n_{1}'(y)}) \cup \{v\}$ is exactly equal to the set of endpoints $v_{k_{\bf B}}^{\bf B}$ of the paths $\Pi_{\bf B}$ with ${\bf B}\in Z$.
	\end{enumerate}
	\end{lemma}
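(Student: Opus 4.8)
\textbf{Proof plan for Lemma~\ref{lem:reconstruct}.}

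The plan is to construct $(\mb n_1', \mb n_2')$ from $(\mb n_1, \mb n_2)$ in two stages, handling one block ${\bf B} \in Z$ at a time; since the blocks in $Z$ are strongly disjoint, the modifications associated with different blocks occur in disjoint sets of edges (the relevant edges for ${\bf B}$ all lie in $E_2(\Pi_{\bf B})$, which is contained in a ball of radius $\sim 3dn + 2$ around the center of ${\bf B}$, while strong disjointness gives pairwise center distance $7dn$), so it suffices to describe and verify the modification for a single block and then superpose. Fix ${\bf B} \in Z$ and let $\Pi_{\bf B} = (v_0, \ldots, v_k)$ be the path furnished by Lemma~\ref{lem:gluing} applied with $S$ as given (note the hypotheses of Lemma~\ref{lem:gluing} are exactly met: ${\bf B}$ intersects $\mathcal C_{\mb n_1 + \mb n_2}(S)$ and contains a vertex of $\mathcal C_{\mb n_1}(y)$ with two neighbors in $\mathcal C_{\mb n_1}(y)$, and $\mathcal C_{\mb n_1}(y) \cap \mathcal C_{\mb n_1 + \mb n_2}(S) = \emptyset$ because $\mathcal C_{\mb n_1 + \mb n_2}(y) \cap S = \emptyset$).

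\textbf{Stage 1 (connect and kill).} First, increase $\mb n_2$ by $1$ on each edge $\{v_i, v_{i+1}\}$ of $\Pi_{\bf B}$; because $v_0 \in \mathcal C_{\mb n_1 + \mb n_2}(S)$ and $v_k \in \mathcal C_{\mb n_1}(y)$, after this change the sets $\mathcal C_{\cdot}(S)$ and $\mathcal C_{\cdot}(y)$ of the combined current get joined along $\Pi_{\bf B}$. Next, set the combined current to $0$ on every edge incident to an interior vertex $v_i$ ($0 < i < k$) of $\Pi_{\bf B}$ that is \emph{not} an edge of $\Pi_{\bf B}$ itself — concretely, for each such edge redistribute: put $\mb n_1'(e) = \mb n_2'(e) = 0$. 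This "brutal" erasing is what guarantees property~(d): by Lemma~\ref{lem:gluing}'s fourth bullet the interior vertices $v_i$ were not already in $\mathcal C_{\mb n_1}(y) \cup \mathcal C_{\mb n_1+\mb n_2}(S)$, and after erasing, the only vertex of $\mathcal C_{\mb n_1'}(y)$ connected to $S$ through $(\L_N \setminus \mathcal C_{\mb n_1'}(y)) \cup \{v\}$ via the newly created bridge is precisely the endpoint $v_k$. The erasing, however, creates new sources: each interior $v_i$ may have had odd degree change. To fix this, observe that the set $T_{\bf B}$ of neighbors of $\Pi_{\bf B}\setminus\{v_0\}$ lying outside $\mathcal C_{\mb n_1+\mb n_2}(S)$ is, by the last bullet of Lemma~\ref{lem:gluing}, connected inside $S_{\bf B}$ (vertices within graph distance $2$ of $\Pi_{\bf B}$, outside $\mathcal C_{\mb n_1+\mb n_2}(S)$, union $\{v_k\}$); so we can route correction paths pairing up the new sources through $S_{\bf B}$, adding $1$ to $\mb n_2'$ along their edges. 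Since $S_{\bf B} \subset N_2(\Pi_{\bf B})$ all these corrections stay within $E_2(\Pi_{\bf B})$, giving~(b). Crucially these correction paths live within graph-distance $2$ of $\Pi_{\bf B}$ but \emph{avoid} $\mathcal C_{\mb n_1+\mb n_2}(S)$ and (since they pass only through $T_{\bf B} \cup \{v_k\}$, not through the interior of $\Pi_{\bf B}$) do not reconnect $S$ to $\mathcal C_{\mb n_1'}(y)$ except at $v_k$ — this is the delicate point needed for~(d).

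\textbf{Remaining verifications.} Property~(a) holds because Stage~1's first move (adding a path from $v_0$ to $v_k$) changes parity only at $v_0 \in \mathcal C_{\mb n_1+\mb n_2}(S)$ and $v_k \in \mathcal C_{\mb n_1}(y)$ — but $v_0$'s parity is absorbed since it was already connected to a source-free set, and similarly for $v_k$; more carefully, one notes the erasing plus correction exactly cancels all parity changes at interior vertices, and the only net effect is to merge two components without altering $\partial(\mb n_1'+\mb n_2')$, hence $\partial \mb n_1' = \{x,y\}$, $\partial \mb n_2' = \emptyset$. Property~(c): on edges of $\Pi_{\bf B}$ and correction paths we only \emph{add} to $\mb n_2$, and the added amount is at most $2$ (at most $1$ from the bridge, at most $1$ from a correction path through the same edge — one should check no edge gets used by two correction paths, which follows from choosing them edge-disjoint, or absorb into the bound $\le 2$); on erased edges $\mb n_j'(e) = 0 \le \mb n_j(e)$.

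\textbf{Main obstacle.} The hard part is the bookkeeping in Stage~1 ensuring simultaneously that (i) the correction paths removing the spurious sources stay within $E_2(\Pi_{\bf B})$, (ii) they do not create any connection between $S$ and $\mathcal C_{\mb n_1'}(y)$ other than at $v_k$, and (iii) the resulting current values obey the $\le 2$ bound of~(c). Items (i) and (ii) are precisely what the carefully engineered last bullet of Lemma~\ref{lem:gluing} (connectivity of $T_{\bf B}$ inside $S_{\bf B}$, with $S_{\bf B}$ avoiding $\mathcal C_{\mb n_1+\mb n_2}(S)$) is designed to deliver, so the real work is translating that combinatorial connectivity statement into an explicit edge-disjoint family of correction paths and checking the source-cancellation arithmetic vertex by vertex. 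Property~(d) also requires a short argument that \emph{no new} interior connection to $S$ was inadvertently created: this uses that outside $E_2(\Pi_{\bf B})$ the pair $(\mb n_1', \mb n_2') = (\mb n_1, \mb n_2)$, that $\mathcal C_{\mb n_1'}(y) \supset \mathcal C_{\mb n_1}(y)$ still does not touch $S$ away from the bridge, and that the interior vertices $v_i$ are now isolated from everything except $\Pi_{\bf B}$ itself.
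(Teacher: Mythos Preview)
Your overall strategy matches the paper's: build a bridge along $\Pi_{\bf B}$, erase the side edges so the bridge is the unique new connection, then repair the spurious sources via paths in $S_{\bf B}$. However, the source bookkeeping in your Stage~1 has a genuine gap that breaks Property~(a).

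When you add $1$ to $\mb n_2$ along each edge of $\Pi_{\bf B}$, you flip the parity of $\Delta(\mb n_2)$ at the two endpoints $v_0$ and $v_k$. In particular, since $v_0\in\mathcal C_{\mb n_1+\mb n_2}(S)$ and $v_1\notin\mathcal C_{\mb n_1+\mb n_2}(S)$, the edge $\{v_0,v_1\}$ had $(\mb n_1+\mb n_2)$-value $0$, hence $\mb n_2(\{v_0,v_1\})=0$; after your increment it equals $1$, so $v_0$ becomes a source of $\mb n_2'$. Your repair step routes correction paths through $S_{\bf B}$, but $S_{\bf B}$ is by definition disjoint from $\mathcal C_{\mb n_1+\mb n_2}(S)\ni v_0$, so no correction path in $S_{\bf B}$ can ever reach $v_0$. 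The sentence ``$v_0$'s parity is absorbed since it was already connected to a source-free set'' conflates connectivity with parity; being connected to sourceless vertices does not remove a source, and knowing only that $\partial(\mb n_1'+\mb n_2')$ is unchanged says nothing about $\partial\mb n_1'$ and $\partial\mb n_2'$ separately. A second, related gap: you erase side edges for $\mb n_1$ too but only describe corrections ``adding $1$ to $\mb n_2'$''; and even if you also corrected $\mb n_1$, the interior vertices $v_i$ may keep odd $\mb n_1$-degree coming from the path edges $\{v_{i-1},v_i\},\{v_i,v_{i+1}\}$ (nothing forces $\mb n_1$ to vanish there), yet $v_i\notin S_{\bf B}$.

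The paper resolves both issues with two small but essential modifications. First, for $\mb n_2$ it \emph{sets} the value to the even number $2$ (rather than adding $1$) on every path edge \emph{and} on every edge incident to $v_k$; this creates no source at $v_0$, at $v_k$, or anywhere on $\Pi$. Second, for $\mb n_1$ it zeros \emph{all} edges incident to $\overline\Pi$, including the path edges themselves, so that each interior vertex has $\mb n_1$-degree zero. After these choices, every new source of $\mb n_1$ and of $\mb n_2$ separately lands in $T_{\bf B}\subset S_{\bf B}$, and then the fifth item of Lemma~\ref{lem:gluing} lets one pair them off inside $S_{\bf B}$ exactly as you propose.
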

We already explained why Properties~(b) and (d) are important for the reconstruction step. Property~(c) is motivated by the observation that
$$\sum_{\mb n: \mb n \geq \mb n', \mb n_{|\mathsf E} = \mb n'_{|\mathsf E}}\frac{w(\mb n)}{w(\mb n')} \leq \e^{C|\mathsf E|}$$
for some $C > 0$ and any $\mathsf E \subset E_N$, and thus is crucial for efficient reconstruction. We will prove Lemma~\ref{lem:reconstruct} slightly later, but before that let us show how can it can be used to derive Lemmata~\ref{lem:gluing3} and \ref{lem:gluing2}. 
\begin{proof}[Proof of Lemma~\ref{lem:gluing3}]
The main idea underlying the proof is the following. We already know from Proposition~\ref{prop:connectivity} that ${\bf B}$ and ${\bf B}'$ are connected in $\mb n_1 + \mb n_2$ for every ${\bf B},{\bf B'}\in \mathcal B_n(\L_N)$ with an extremely high 
probability. As a consequence, we can effectively assume that this happens.  Then, it is easy to see that when $\mathcal C$ occurs and $x, y$ are not connected in $\L_N$ by $\mb n_1 + \mb n_2$, one can find a set $S$ and at least $N/(4n)$ many blocks ${\bf B} \in \mathcal B_n(\L_N)$ satisfying the conditions of Lemma~\ref{lem:gluing} (which are the same as the conditions on the 
elements of $Z$ in Lemma~\ref{lem:reconstruct}). Thus, we can pick any subset $Z$ of a certain number of such blocks with the additional condition that they are strongly disjoint, and modify $(\mb n_1, \mb n_2)$ according to the previous lemma. This gives us a relation on the set of pairs of currents and allows us to apply Lemma~\ref{lem:mvmp} for bounding the probability of the event we are interested in. As we will see later in the proof, the bound we obtain involves the 
(fixed) size of $Z$ as a parameter and the lemma follows by choosing an appropriate value for this size. The detailed argument is given below.

\bigbreak\noindent
{\bf Construction of $\mathcal R$.} Let $V(\mb n_1, \mb n_2)$ be a maximal subset of strongly disjoint blocks ${\bf B}\in \mathcal B_n(\L_N)$ 
with the property that $\mathcal C_{\mb n_1 + \mb n_2}(x) \cap {\bf B} =
	\emptyset$ and ${\bf B}$ contains a vertex of $\mathcal C_{\mb n_1}(y)$ having at least two neighbors in $\mathcal C_{\mb n_1}(y)$.
Let $\mathcal C_m$ denote the sub-event of $\mathcal C$ for which $V(\mb n_1, \mb n_2)$ contains exactly $m$ blocks.
	Also, let $\mathcal E$ be the event that every two blocks ${\bf B}$ and ${\bf B}'$ in $\mathcal B_n(\L_N)$ are connected by $\n_1+\n_2$ in $\L_N$. Finally, define
	$$\mathcal D_m := \mathcal C_m \cap \{\nlr{\L_N}{\mb n_1 + \mb n_2}{x}{y}\} \cap \mathcal E.$$
	\begin{figure}[!htb]
   \centering
    \includegraphics[width=.60\linewidth]{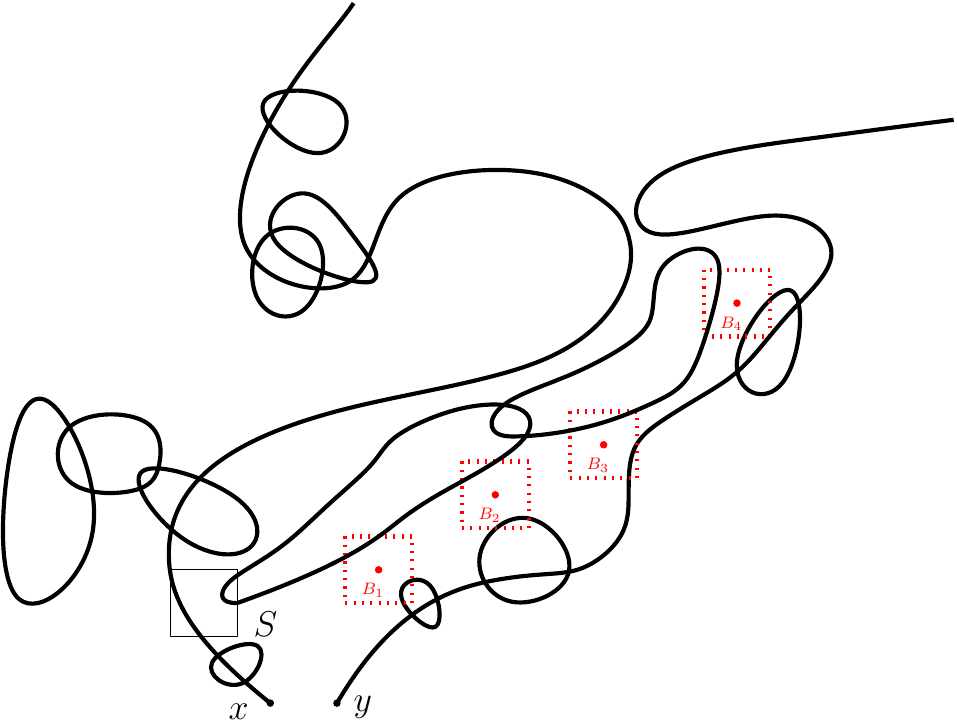}
    \caption{{\bf A simple representation of the set $V(\mb n_1, \mb n_2)$.}}
    \label{fig:lem:gluing2}
  \end{figure}
	Let $S = S(\mb n_1, \mb n_2) \in\mathcal B_n(\L_N)$ be a block intersecting $\mathcal C_{\mb n_1}(x)$ but not $\mathcal C_{\mb n_1 + \mb n_2}(y)$.  We remark that such a set $S$ exists since $\mathcal D_m \subset \mathcal C\subset\mathcal C_{y,x}$. 
	
The conditions of Lemma~\ref{lem:reconstruct} are met for $S$ and any subset $Z$ of 
$V(\mb n_1, \mb n_2)$.
Therefore, we can define a relation as 
follows. Fix $\delta>0$ to be a small number to be determined later. Then, $((\mb n_1, \mb n_2),(\mb n_1', \mb n_2'))\in \mathcal R$ if and only if $(\n_1,\n_2)\in\mathcal D_m$ and
$$\text{$(\n_1' ,\n'_2)= (\mb n_{1}',\n'_2)(\mb n_1, \mb n_2, Z)$ for some $Z \subset V(\mb n_1, \mb n_2)$ such that $|Z| = \delta m$},$$ 
where the map $(\n_1,\n_2,Z)\mapsto (\n'_1,\n_2')$ is given by Lemma~\ref{lem:reconstruct}.

With this definition of $\mathcal R$, we can now check Properties (i) and (ii) of Lemma~\ref{lem:mvmp} to deduce a bound on the probability of $\mathcal D_m$.
\bigbreak\noindent
{\bf Property (i).} For any $(\mb n_1, \mb n_2)\in\mathcal D_m$, the map $Z \mapsto \{v_{k_{\bf B}}^{\bf B}: {\bf B} \in Z\}$ is one-to-one since $v^{\bf B}_{k_{\bf B}}$ is within a distance $3dn$ of the center of ${\bf B}$, and the blocks in $V(\mb n_1, \mb n_2)$ are strongly disjoint. 
Hence,
\begin{equation}
\label{eq:gluing3_Rn12card}
|\mathcal R((\mb n_1, \mb n_2))| \geq \binom{|V(\mb n_1, \mb n_2)|}{\delta m} =\binom{m}{\delta m} \,.
\end{equation}
\bigbreak\noindent
{\bf Property (ii).} Fix $(\mb m_1, \mb m_2)$. For a block $S$, a set of $\delta m$ strongly disjoint blocks $Z$, a subset of edges $E$ and a collection of paths $\Pi=(\Pi_{\bf B}:{\bf B}\in Z)$, introduce the set $\mathcal A(\mb m_1,\mb m_2,S,Z,E,\Pi)$ of pairs of currents $(\n_1,\n_2)\in \mathcal D_m$ such that 
\begin{itemize}[noitemsep,nolistsep]
\item $S(\n_1,\n_2)=S$,
\item $(\mb m_1,\mb m_2) =(\mb n_1',\n'_2) (\n_1,\n_2,Z)$,
\item $E=\{e\in E_N:(\n_1,\n_2)(e)\ne (\mb m_1,\mb m_2)(e)\}$,
\item $\Pi_{\bf B}(\n_1,\n_2)=\Pi_{\bf B}$ for every ${\bf B}\in Z$. 
\end{itemize}
The definition of $\mathcal R$ implies directly that
\begin{align}
\label{eq:gluing3_reconstruct1}
\sum_{(\mb n_1, \mb n_2) \in \mathcal R^{-1}(\mb m_1, \mb m_2)}&\P_{\L_N^+, \L_N}^{\{x, y\}, \emptyset}[(\mb n_1, \mb n_2)] \nonumber \\
&= \P_{\L_N^+, \L_N}^{\{x, y\}, \emptyset}[(\mb m_1, \mb m_2)]\sum_{(S, Z, E, \Pi)}\sum_{(\mb n_1, \mb n_2) \in \mathcal A(\mb m_1,\mb m_2,S,Z,E,\Pi)} \frac{w(\mb n_1)w(\mb n_2)}{w(\mb m_1)w(\mb m_2)}.
\end{align}
We will bound the two summations on the right of \eqref{eq:gluing3_reconstruct1} in two steps. 
First, Properties (b) and (c) of Lemma~\ref{lem:reconstruct} imply that 
\begin{align}
\sum_{(\mb n_1, \mb n_2) \in \mathcal A(\mb m_1,\mb m_2,S,Z,E,\Pi)} \frac{w(\mb n_1)w(\mb n_2)}{w(\mb m_1)w(\mb m_2)}&\leq \prod_{\substack{e \in E\\ j=1,2}}\Big(\sum_{\ell \geq \mb m_j(e)}\frac{\beta^{\ell}\mb m_j(e)!}{\beta^{\mb m_j(e)}\ell!} + \sum_{0 \leq \ell \leq \mb m_j(e)}\frac{\beta^{\ell}\mb m_j(e)!}{\beta^{\mb m_j(e)}\ell!}\mathbb I[\mb m_j(e) \leq 2]\Big)\nonumber\\
& \leq   \exp ({C|E|})\le \exp(C\delta mn).\label{eq:bound ratio}
\end{align}
In the second line, we used that $E$ is included in the union of the $E_2(\Pi_{\bf B})$ for ${\bf B}\in Z$. Furthermore, $E_2(\Pi_{\bf B})$ is the set of vertices at a graph distance at most 2 from $\Pi_{\bf B}$.
According to Lemma~\ref{lem:gluing}, $\Pi_{\bf B}$ must be a shortest length path between two vertices within a distance $3dn$ of the center of ${\bf B}$, a fact which implies that its length is smaller than $6d^2n$. Overall, this implies that $|E|\le 6d^2n|Z|\le C\delta mn$.

Second, we bound the number of possibilities for $S$, $Z$, $E$ and $\Pi$. Obviously, there are fewer than $|\mathcal B_n(\L_N)|$ choices for $S$. Property (d) of Lemma~\ref{lem:reconstruct} implies that $(\mb m_1,\mb m_2)$ and $S$ determine the points $v_{k_{\bf B}}^{\bf B}$ for ${\bf B}\in Z$. Since these vertices are within a distance $3dn$ of the centers of the blocks in $Z$, and that there are $\delta m$ blocks in $Z$, this 
reduces the number of possibilities for $Z$ to $(2d)^{3d\delta m}$. Also, each one of the paths $\Pi_{\bf B}$ is a self-avoiding path of length at most $6d^2n$ ending at $v_{k_{\bf B}}^{\bf B}$, and therefore there are at most $(2d)^{6d^2n\delta m}$ choices for the collection of paths $\Pi_{\bf B}$ with ${\bf B}\in Z$. Finally, $E$ being a subset of $\cup_{{\bf B} \in Z}E_2(\Pi_{\bf B})$, we deduce that the number of possibilities for $E$ is bounded by $2^{25d^3n\delta m}$.

Overall plugging these bounds and \eqref{eq:bound ratio} into \eqref{eq:gluing3_reconstruct1} gives
\begin{equation*}
\label{eq:gluing3_reconstruct}
\sum_{(\mb n_1, \mb n_2) \in \mathcal R^{-1}(\mb m_1, \mb m_2)}\P_{\L_N^+, \L_N}^{\{x, y\}, \emptyset}[(\mb n_1, \mb n_2)] \leq N^d\e^{C\delta mn}\,\P_{\L_N^+, \L_N}^{\{x, y\}, \emptyset}[(\mb m_1, \mb m_2)]\,.
\end{equation*}
\bigbreak\noindent
{\bf Conclusion of the proof.} Plugging the last inequality and \eqref{eq:gluing3_Rn12card} in Lemma~\ref{lem:mvmp} gives
\begin{equation}
\label{eq:gluing3_Bnarmsbnd}
\P^{\{x,y\}, \emptyset}_{\L_N^+, \L_N}[\mathcal D_m] \leq \frac{CN^d\e^{C\delta mn}}{\binom{m}{\delta m}}\,.
\end{equation}
At this stage, recall that $N=e^{n^\alpha}$ with $\alpha>1$. An elementary computation shows that
choosing $\delta = \e^{-2Cn}$ (which is a valid choice since $\delta m \geq \e^{-2Cn} N / (4n) = \e^{n^\alpha - 2Cn} / (4n) > 1$ for every large $n$) gives that 
\begin{equation*}
\label{eq:gluing3_lemma1}
\P^{\{x, y\}, \emptyset}_{\L_N^+, \L_N}[\mathcal D_m] \leq \e^{-c\e^{-Cn} mn}\,
\end{equation*}
for every $m \geq \tfrac1{4(2d)^{4d}}N/n$ and $N$ large enough (independent of $m$). Since any pair $(\n_1,\n_2)\in\mathcal C$ must be in one of the $\mathcal C_m$ for $m \geq \tfrac1{4(2d)^{4d}}N/n$, we obtain that \begin{equation}
\label{eq:gluing3_lemma2}
\P^{\{x, y\}, \emptyset}_{\L_N^+, \L_N}[\mathcal C \cap \{\nlr{\L_N}{\mb n_1 + \mb n_2}{x}{y}\} \cap \mathcal E] \leq \e^{-c\e^{-Cn}N}
\end{equation}
by summing over $m$. Now, since $N = \e^{n^\alpha}$ for $\alpha < d-1$, it follows from Proposition~\ref{prop:connectivity} that
	\begin{equation}
	\label{eq:gluing3_probElb}
	\P_{\L_N^+, \L_N}^{\{x, y\}, \emptyset}[\mathcal E] \geq 1 - CN^d\e^{-cn^{d-1}} \geq 1 - \e^{-cn^{d-1}}\,
	\end{equation}
	for every large enough $N$. 
The result follows readily from this bound and \eqref{eq:gluing3_lemma2}.
\end{proof}
{\begin{remark}\label{rmk:h}
We already used the bound $\alpha<d-1$ to deduce Proposition~\ref{thm:mixing1} from Lemmata~\ref{lem:gluing3} and \ref{lem:gluing2}. The end of the previous proof further explains where $1<\alpha<d-1$ is used. Indeed, $\alpha<d-1$ is used to invoke Proposition~\ref{prop:connectivity}. The bound $\alpha>1$ is there to guarantee that the exponential (in $n$) finite-energy cost appearing for instance in \eqref{eq:bound ratio} is overcome by the choice of $N$.
\end{remark}}
\begin{proof}[Proof of Lemma~\ref{lem:gluing2}] The proof is the same as the previous one, with $S=\{x\}$ instead of $S$ being a block.
\end{proof}
We now focus on the proof of Lemma~\ref{lem:reconstruct} using Lemma~\ref{lem:gluing}.
\begin{proof} [Proof of Lemma~\ref{lem:reconstruct}] We begin with the construction of $(\n_1',\n'_2)$ and then show that Properties~(a)--(d) are satisfied. 
\bigbreak\noindent
{\bf Construction of $(\n_{1}', \n_{2}')$.} We proceed in three steps roughly corresponding to the steps that we outlined before Lemma~\ref{lem:gluing}, albeit in a slightly 
	different order. For clarity of exposition, we will use different notations for the pairs of 
	currents resulting from different steps. As a result, we construct three intermediate currents called $\n_1^0$, $\n_2^0$, and $\n_2^1$. At the conclusion of each step, we  track the sources of the currents constructed in that step as well as the direction in which their value changes from the previous step. This will help us to verify Properties~(a) and (c) at the end. 
	
	Also, since the construction will obviously be independent for every ${\bf B}\in Z$ (since the blocks are strongly disjoint), we present it only near one prescribed block ${\bf B}$, which we remove from the notation for convenience. We therefore write $\Pi=(v_0,\dots,v_k)$. Also, we set $\overline{\Pi}=(v_1,\dots,v_{k-1})$ to be the path $\Pi$ minus its endpoints. We will return to the original notation later when we verify the properties of $(\mb n_1', \mb n_2')$.
\bigbreak
\noindent \emph{Step 1: Closing the edges adjacent to $\Pi$.}
	We want to make both currents 0 on edges adjacent to $\Pi$ except those adjacent to its endpoints. Formally, set	\begin{align}
	\label{eq:gluing_n10def}
	\mb n_{1}^0 (e) &:= 
	\begin{cases}
	0  & \mbox{if }e = \{v, w\} \mbox{ for $v\in \overline\Pi$ and $w\in \L_N$}, \\
	\mb n_1(e) & \text{otherwise},
	\end{cases}\\
	\label{eq:gluing_n20def}
	\mb n_{2}^0 (e) &:=
	\begin{cases}
	0 & \mbox{if }e = \{v,w\} \mbox{ for $v\in \overline\Pi$ and $w\in \L_N\setminus \Pi$}, \\
	\mb n_2(e) & \text{otherwise}\,.
	\end{cases}
	\end{align}
In order to track the sources of $\mb n_1^0$, let us first notice that vertices in $\overline\Pi$ are not sources since they are incident to edges with zero $\n_1^0$ current only.
	Since $\overline \Pi$ does not intersect $\mathcal C_{\mb n_1}(y)$ or $\mathcal C_{\mb n_1 + \mb n_2}(S)$ (by the definition of $\Pi$ in Lemma \ref{lem:gluing}), we deduce that the sources of $\n_1^0$ not equal to $x$ or $y$ and within a distance of $4dn$ of the center of ${\bf B}$ must be at a distance exactly equal to 1 of $\overline \Pi$ and that at least one of the edges incident to them must have changed value between $\n_1$ and $\n_1^0$. Since edges between $\overline\Pi$ and $\mathcal C_{\mb n_1 + \mb n_2}(S) \cup \mathcal C_{\mb n_1}(y) $ already had a zero current in $\n_1$, we deduce that the new sources cannot be in $\mathcal C_{\n_1+\n_2}(S)$ and must therefore be in the set $T_{\bf B} = T_{\bf B}(\mb n_1, \mb n_2)$ defined in Lemma~\ref{lem:gluing}.
Also, notice that there is an even number of such sources.  
Similarly, we get that the set of sources of $\n_2^0$ is a subset of $\overline\Pi\cup T_{\bf B}$.
	
	Finally, notice that 
	\begin{equation}
	\label{eq:gluing_njS0njineq}
	\mb n_{1}^0(e) \leq \mb n_1(e) \mbox{ and } \mb n_{2}^0(e) \leq \mb n_2(e) \mbox{ for every } e \in E_N\,.
	\end{equation} 
	\bigbreak
	\noindent{\em Step~2. Opening the edges along $\Pi$.} The second step consists in defining $\n_1^1=\n_1^0$ and 
	\begin{equation*}
	\label{eq:gluing_n21def}
	\mb n_{2}^1 (e): =
	\begin{cases}
	2  & \mbox{if } e = \{v, w\} \mbox{ with } v,w\in\Pi \mbox{ or if }v = v_k \mbox{ and }w \in \L_N.\\
	\mb n_{2}^0(e) & \text{otherwise}.
	\end{cases}
	\end{equation*} 
By definition, 
\begin{equation}
\label{eq:gluing_nj1ineq}
\mbox{$\mb n_{2}^1(e) \neq \mb n_{2}^0(e)$ implies that 
	$\mb n_{2}^1(e) = 2$}.
\end{equation}
	Since $\overline \Pi$ does not intersect $\mathcal C_{\mb n_1 + \mb n_2}(S)$, we have $\mb n_2(\{v_{k-1}, v_k\}) = \mb n_2^0(\{v_{k-1}, v_k\}) = 0$ and therefore the definition of $\mb n_2^1$ also implies that a source of $\mb n_2^1$ is a source of $\n_2^0$ which is not on $\Pi$. Hence, it is included in $T_{\bf B}$ again and also, there are an even number of sources within a distance of $4dn$ of ${\bf B}$. 
	\bigbreak	\noindent{\em Step~3. Killing the sources of $(\mb n_1^1, \mb n_2^1)$.}
We now remove the additional sources of $\mb n_1^1$ and $\mb n_2^1$, all of which 
	lie in $T_{\bf B}$. We start with the sources of $\n_1^1$. By the fifth item of Lemma~\ref{lem:gluing}, $T_{\bf B}$ is a connected subset of $S_{\bf B}$. We can therefore choose (in some arbitrary but fixed manner) paths $\Gamma_{1}^1, \ldots, \Gamma_{\ell}^1$ in $S_{\bf B}$ pairing the sources of $\n_1^1$. Let $\mb m$ be the current, equal at each edge to 0 (resp.~1) if there is an even (resp.~odd) number of paths going through this edge. Finally, set	\begin{equation*}
	\label{eq:gluing_njdef}
	\mb n_1'(e) :=
	\begin{cases}
	\mb n_1^1(e) - \mb m(e)  & \mbox{if }\mb n_1^1(e) \ge2\\
	\mb n_1^1(e) + \mb m(e) & \text{otherwise}\,.
	\end{cases}
	\end{equation*}
We obtain immediately that $\partial\mb n_1'=\{x,y\}$. We proceed in the same way for $\n_2^1$ in order to obtain $\n_2'$ with $\partial\n_2'=\emptyset$.	Again, notice that
\begin{equation}
\label{eq:gluing_nj'ineq}
\mb n_{j}'(e) \leq \mb n_j^1(e) \mbox{ or } \mb n_{j}'(e) \leq 2 \mbox{ for every } e \in E_N \mbox{ and }j = 1,2\,.
\end{equation} 	
	\bigbreak\noindent
	{\bf Verification of the properties of $(\mb n_1', \mb n_2')$.} In this part, we assume that we made the construction above for every ${\bf B}\in Z$. 
	\bigbreak\noindent
{\em Property (a).} This follows readily from the two sentences preceding \eqref{eq:gluing_nj'ineq}.
\bigbreak\noindent
{\em Property (b).} At each stage of the construction, edges for which the value of currents is modified are always within a graph distance $2$ from one of the paths $\Pi_{\bf B}$, which means that Property (b) is satisfied. 
\bigbreak\noindent
{\em Property (c).} This follows from the displays \eqref{eq:gluing_njS0njineq}, \eqref{eq:gluing_nj1ineq} and \eqref{eq:gluing_nj'ineq}.
 \bigbreak\noindent
{\em Property (d).} Let us first verify one direction of Property~(d), namely that for any $\mb B \in Z$, $v_{k_{\bf B}}^{\bf B} \in \mathcal C_{\mb n_1'}(y)$ and it is connected in $(\L_N \setminus 
\mathcal C_{\mb n_{1}'(y)}) \cup \{v_{k_{\bf B}}^{\bf B}\}$ to $S$ by $\mb n_1' + \mb n_2'$. We divide the proof into two steps.
\medbreak{\em Proof that $v_{k_{\bf B}}^{\bf B} \in \mathcal C_{\mb n_1'}(y)$.} It suffices to show that $\mathcal C_{\mb n_1}(y) \subset \mathcal C_{\mb n_1'}(y)$. It is clear from our construction in Step~3 that $\mathcal C_{\mb n_1^0}(y) = \mathcal C_{\mb 
n_1^1}(y) \subset \mathcal C_{\mb n_1'}(y)$. Also, the definition of $\mb n_1^0$ implies that $\mb n_1^0(e) < \mb n_1(e)$ only if $e = \{v, w\}$ for $v \in \overline \Pi_{{\bf B}}$ where $\mb  B \in Z$ and $w \in \L_N$. Since $\overline \Pi_{\bf B}$ does not intersect $\mathcal C_{\mb n_1}(y)$ by Lemma~\ref{lem:gluing}, we therefore get $\mathcal C_{\mb n_1^0}(y) = \mathcal C_{\mb n_1}(y)$ which completes the proof.

\medbreak{\em Proof that $v_{k_{\bf B}}^{\bf B}$ is connected in $(\L_N \setminus 
	\mathcal C_{\mb n_{1}'(y)}) \cup \{v_{k_{\bf B}}^{\bf B}\}$ to $S$ by $\mb n_1' + \mb n_2'$.} Notice that the path $\Pi_{\bf B}$ is open in $\mb n_2'$ and connects $v_{k_{\bf B}}^{\bf B}$ to $v_0^{\bf B}$. Since $v_0^{\bf B} \in \mathcal C_{\mb n_1 + \mb n_2}(S)$ by definition, it therefore suffices to show that $\mathcal C_{\mb n_1 + \mb n_2}(S) \subset \mathcal C_{\mb n_1' + \mb n_2'}(S)$ and that $\overline \Pi_{\bf B}$ and $\mathcal C_{\mb n_1 + \mb n_2}(S)$ do not intersect $\mathcal C_{\mb n_1'}(y)$. 
	
	To this end, let us first recall from the definitions of $(\mb n_{1}^1, \mb n_{2}^1)$ and $(\mb n_{1}', \mb n_{2}')$ that $(\mb n_1' + \mb n_2')(e) > 0$ whenever $(\mb n_1^0 + \mb n_2^0)(e) > 0$, and consequently
	 $\mathcal C_{\mb n_{1}^0 + \mb n_{2}^0}(S)\subset \mathcal C_{\mb n_{1}' + \mb n_{2}'}(S)$. Also, from the construction in Step~1 we have $(\mb n_1^0 + \mb n_2^0)(e) < (\mb n_1 + \mb n_2)(e)$ only if $e$ has an endpoint in $\overline \Pi_{\mb B}$ for some $\mb B \in Z$. Since $\overline\Pi_{\bf B}$ does not intersect $\mathcal C_{\mb n_1 + \mb n_2}(S)$ by Lemma~\ref{lem:gluing}, it therefore follows that $\mathcal C_{\mb n_{1}^0 + \mb n_{2}^0}(S) = \mathcal C_{\mb n_{1} + \mb n_{2}}(S)$ and hence $\mathcal C_{\mb n_1 + \mb n_2}(S) \subset \mathcal C_{\mb n_1' + \mb n_2'}(S)$. 
	 
	 Next let us ``bound'' the set $\mathcal C_{\mb n_1'}(y)$ from above. Notice that
	\begin{align*}
	\mathcal C_{\mb n_{1}'}(y) &\subset \mathcal C_{\mb n_{1}^0}(y) \cup \bigcup_{{\bf B} \in Z}\mathcal C_{\mb n_{1}^0}(S_{\bf B}) = \mathcal C_{\mb n_1}(y) \cup \bigcup_{{\bf B} \in Z}\mathcal C_{\mb n_{1}^0}(S_{\bf B})\,,
	\end{align*}
	where in the second step we used $\mathcal C_{\mb n_1}(y) = \mathcal C_{\mb n_1^0}(y)$ as proved in the previous part. Since, by Lemma~\ref{lem:gluing}, $\overline\Pi_{\mb B}$ is disjoint from $\mathcal C_{\mb n_1}(y)$ and $S_{\mb B}$, we deduce from the previous displayed equation that $\mathcal C_{\mb n_{1}'}(y) \cap \overline \Pi_{\mb B} = \emptyset$ for any $\mb B \in Z$ if we show $\mathcal C_{\mb n_1^0}(\overline\Pi_{\mb B})= \overline\Pi_{\mb B}$ for any such $\mb B$. But this follows immediately from the fact that $\mb n_{1}^0(e) = 0$ for any edge $e$ whose one endpoint lies in $\overline\Pi_{\bf B}$ for some ${\bf B}\in Z$. 
	
	Similarly, from $\mathcal C_{\mb n_1}(y) \cap \mathcal C_{\mb n_1 + \mb n_2}(S) = \emptyset$ and $S_{\mb B} \cap \mathcal C_{\mb n_1 + \mb n_2}(S) = S_{\mb B} \cap \mathcal C_{\mb n_1^0 + \mb n_2^0}(S) = \emptyset$ (both are consequences of Lemma~\ref{lem:gluing}), we deduce that $\mathcal C_{\mb n_1'}(y) \cap \mathcal C_{\mb n_1 + \mb n_2}(S) = \emptyset$, concluding the proof of this part.

	\bigbreak It remains to verify the other direction of Property~(d), namely that any vertex $v \in  \mathcal C_{\mb n_{1}'}(y)$ that is connected to $S$ by $\n'_1+\n'_2$ in $(\L_N \setminus \mathcal C_{\mb n_{1}'(y)}) \cup \{v\}$ must be the endpoint $v_{k_{\bf B}}^{\bf B}$ of a path $\Pi_{\bf B}$ for some ${\bf B}\in Z$. It suffices to show that any self-avoiding path $\Pi$ in $\mb n_{1}' + \mb n_{2}'$ between $S$ and $\mathcal C_{\mb n_{1}'}(y)$ 
	contains $v_{k_{\bf B}}^{\bf B}$ for some ${\bf B}\in Z$. 
	
	Since the blocks are strongly disjoint, we deduce from the definition of $(\mb n_{1}', \mb n_{2}')$ that 
	$$(\mb n_{1}' + \mb n_{2}')(e) = (\mb n_{1}^1 + \mb n_{2}^1)(e) = 0\,$$
	for every edge with one endpoint in some $\overline\Pi_{\bf B}$. Together with the fact that $\overline \Pi_{\bf B}$ is disjoint from $\mathcal C_{\mb n_1'}(y)$ and $S$ (already noted in the previous parts), this implies that if $\Pi$ contains an edge of $\Pi_{\bf B}$, then it must contain the vertex $v_{k_{\bf B}}^{\bf B}$. On the other hand, if $\Pi$ does not contain any edge of $\Pi_{\bf B}$ for any ${\bf B} \in Z$, then it must contain an edge with one endpoint in $\mathcal C_{\mb n_{1} + \mb n_{2}}(S)$ and another which is not in $\mathcal C_{\mb n_{1} + 
		\mb n_{2}}(S)$ or in any of the $\overline\Pi_{\bf B}$. This is because $\mathcal C_{\mb n_1'}(y) \cap \mathcal C_{\mb n_1 + \mb n_2}(S) = \emptyset$ as we noted in the previous part. 
		Obviously such an edge cannot exist in $\mb n_1 + \mb n_2$. Now, observing that $(\mb n_1 + \mb n_2)(e) < (\mb n_1' + \mb n_2')(e)$ only if $e$ is an edge in $\Pi_{\mb B}$ or $S_{\mb B}$ for some $\mb B \in Z$ and that $S_{\mb B} \cap \mathcal C_{\mb n_1 + \mb n_2}(S) = \emptyset$, we conclude that such an edge cannot exist in $\mb n_{1}' + \mb n_{2}'$ as well, thus finishing the proof.\end{proof}
Finally, we are left with the proof of Lemma~\ref{lem:gluing}. It is clear that the only non-trivial part of the lemma is the fifth item. However, the following crucial observation makes it much simpler. 
Suppose that we choose $\Pi_{\bf B}\coloneqq (v_0, \ldots, v_k)$ as a shortest path between $\mathcal C_{\mb n_1 + \mb n_2}(S)$ and $\mathcal C_{\mb n_1}(y)$ restricted to 
some region. Also suppose that $N_2(\Pi_{\mb B})$ lies in that region. Then the distance between $v_t$ and $\mathcal C_{\mb n_1 + \mb n_2}(S)$ is at least 3 for any $t \geq 3$ (since otherwise there would be a shorter path between $\mathcal C_{\mb n_1 + \mb n_2}(S)$ and $\mathcal C_{\mb 
n_1}(y)$ restricted to the region) and hence $N_2(v_t) \setminus (\Pi_{\bf B} \cup \mathcal C_{\mb n_1 + \mb n_2}(S)) = N_2(v_t) \setminus \Pi_{\bf B}$ for any such 
$t$. Now, in dimension $d \geq 3$, it is not difficult to prove that $N_1(v_t) \setminus \Pi_{\bf B}$ is a connected subset of $N_2(v_t) \setminus \Pi_{\bf B}$ if $\Pi_{\bf B}$ is a shortest path between its endpoints. Hence, any two vertices in $N_1(v_t) \setminus \Pi_{\bf B}$ can be connected using a path in $N_2(v_t) \setminus \Pi_{\bf B}$ that does not intersect $\mathcal C_{\mb n_1 + \mb 
n_2}(S)$. Therefore the only ``problematic'' vertices in $T_{\bf B}$ (see the statement of 
Lemma~\ref{lem:gluing}) are those in $N_1(v_1)$ and $N_1(v_2)$. We deal with them in the proof by carefully considering all possible alignments for the first three edges of $\Pi_{\bf B}$.
\begin{proof}[Proof of Lemma~\ref{lem:gluing}] For the purpose of this proof, we use $\mb e_j$ to denote the vertex in $\Z^d$ whose $j$-th coordinate is 1 
and all the other coordinates are 0. Let $u$ be a vertex in $\mathcal C_{\mb n_1}(y) \cap {\bf B}$ with at least 
two neighbors in $\mathcal C_{\mb n_1}(y)$. Since ${\bf B}$ intersects $\mathcal C_{\mb n_1 + \mb n_2}(S)$, there is a vertex $v \in \mathcal C_{\mb n_1 + \mb n_2}(S)$ which is within a distance of $3dn$ from the center of ${\bf B}$ realizing the graph distance, denoted by $d_1$ below, between $u$ and $\mathcal C_{\mb n_1 + \mb n_2}(S)$ restricted to the box of radius $4dn$ with the same center as $\mb B$.

Now consider a shortest (for the length) path $\Pi'\coloneqq (v = v_0', v_1', \ldots, v_{d_1}' = u)$ between $v$ and $u$ as our first choice for $\Pi_{\mb B}$. 
For any two vertices $p, q$ adjacent to $v_t$ which do not lie in $\Pi'$ or $\mathcal C_{\mb n_1 + \mb n_2}(S)$ (where $t \geq 1$), we would like to connect them by a path in $\{v'_{d_1}\} \cup N_2(\Pi') \setminus \Pi'$ which 
avoids $\mathcal C_{\mb n_1 + \mb n_2}(S)$. In what follows we do this based on the value of $t$.

\smallskip When $t \geq 3$, the observation we made before the proof implies that the graph distance between any vertex in $N_2(v_t')$ and $u$ is strictly less than $d_1$ and hence $N_2(v_t')$ does not intersect $\mathcal C_{\mb 
n_1 + \mb n_2}(S)$. Thus we only need to show that, for $t \geq 3$, the set $N_1(v_t') \setminus \Pi'$ is connected in $N_2(v_t') \setminus \Pi'$. To this end we consider two distinct possibilities for a pair of vertices $w, w'$ in $N_1(v_t') \setminus \Pi'$. The first possibility is that $w = v_t' + \mb e$ and $w' = v_t' + \mb e'$ for some $\mb e, \mb e' \in \{\pm e_j: j \leq d\}$ such that $\mb e 
\neq -\mb e'$. Notice that in this case, the vertex $v_t' + \mb e + \mb e'$ cannot lie in $\Pi'$ since otherwise the segment of $\Pi'$ between $v_t'$ and $v_t' + \mb e + \mb e'$ would contain at least 3 edges contradicting the 
fact that $\Pi'$ is a shortest path. Hence the path $(w, v_t' + \mb e + \mb e', w')$ lies in $N_2(v_t') \setminus 
\Pi'$. The second possibility is that $w = v_t' - \mb e$ and $w' = v_t' + \mb e$ for some $\mb e \in \{\pm \mb e_j: j \leq d\}$ which we can assume, without loss of generality, to be $\mb e_1$. Let $\mb e \in \{\pm \mb e_j: j = 2, 3\}$ and consider the path $(w, w + \mb e, v_t' + \mb e, w' + 
\mb e, w')$ in $N_2(v_t')$. If this path intersects $\Pi'$, then our previous argument yields that $v_t' 
+ \mb e \in \Pi'$. Since $\Pi'$ is a self-avoiding path, it cannot contain more than 2 neighbors of $v_t'$ and thus the path $(w, w + \mb e, v_t' + \mb e, w' + \mb e, w')$ lies in $N_2(v_t') \setminus \Pi'$ for some $\mb e \in \{\pm \mb e_j: j = 1, 2\}$.

\smallskip Thus it only remains to deal with the vertices adjacent to $v_1$ and $v_2$ which do not lie in $\Pi'$ or $\mathcal C_{\mb n_1 + \mb 
n_2}(S)$. Unfortunately it may not be always possible to connect a pair of such vertices by a path in $\{v_{d_1}'\}\cup N_2(\Pi') \setminus \Pi'$ and in those cases we need to modify $\Pi'$. Below we discuss these cases based on all possible values of $d_\infty \coloneqq 
\|v - u\|_\infty$ and $d_1$. One observation that will be particularly useful is that $N_1(v_2')$ and $\mathcal 
C_{\mb n_1  + \mb n_2}(S)$ are disjoint. Thus we only need to connect any vertex in $N_1(v_1') \setminus (\Pi' \cup \mathcal C_{\mb n_1 + \mb n_2}(S))$ to a vertex in $N_1(v_t') \setminus \Pi'$ for some $t \geq 2$ and similarly any vertex in $N_1(v_2') 
\setminus \Pi'$ to a vertex in $N_1(v_t') \setminus \Pi'$ for some $t \geq 3$ using a path in $N_2(\Pi') \setminus (\Pi' \cup \mathcal C_{\mb n_1 + \mb n_2}(S))$. 
As a final remark before we go to the details, let us mention that when $d_1 = 1$ or when $d_\infty = 2$ and $d_1 = 2$, it is not difficult to see that either $\Pi'$ or $(v_0', v_1')$ satisfies the 
items of the lemma. Hence, we only focus on the other cases.
\bigbreak\noindent
{\bf Case~1.} $d_\infty \geq 3$.
\medbreak
In this case we can choose $\Pi'$ so that $v_t' = v_0' + t  \mb e$ for every $t \le 3$ and some $\mb e \in \{\pm \mb e_j: j \le d\}$. Without loss of generality, we assume 
that $\mb e = \mb e_1$. 
Notice that for any vertex $p$ in $N_1(v_1') \setminus (\Pi' \cup \mathcal 
C_{\mb n_1 + \mb n_2}(S))$, the vertex $p + \mb e_1$ lies in $N_1(v_1' + \mb e_1) \setminus \Pi' = N_1(v_2')\setminus \Pi'$ and hence the edge $\{p, p + \mb e_1\}$ lies in $N_1(\Pi') \setminus (\Pi' \cup \mathcal 
C_{\mb n_1 + \mb n_2}(S))$ as well. 
Similarly any vertex in $N_1(v_2') \setminus 
\Pi'$ is either a neighbor of $v_4'$ (when $v_4' = v_3' \pm \mb e_j$ for some $j > 1$) or has a neighbor in $N_1(v_2' + \mb e_1) \setminus \Pi' = N_1(v_3') \setminus \Pi'$ (when $v_4' = v_3' + \mb e_1$). 
These together imply that the items of Lemma~\ref{lem:gluing} hold for $\Pi_{\mb B} = (v_0', v_1', \ldots, v_{t}')$, where $t = \min \{t' \le d_1: v_{t'}' \in \mathcal C_{\mb n_1}(y)\}$.
\bigbreak
\noindent {\bf Case~2.} $d_\infty = 2$ and $d_1 \geq 3$.
\medbreak
This is the most involved case. Here, we can assume without loss of generality that $v_t' = v_0' + t \mb e_1$ for $t \le 2$ and $v_3' = v_2' + 
\mb e_2$. Just like in Case~1, any vertex in $N_1(v_1') \setminus (\Pi' \cup \mathcal C_{\mb n_1 + \mb n_2}(S))$ has a neighbor in $N_1(v_2') \setminus (\Pi' \cup \mathcal 
C_{\mb n_1 + \mb n_2}(S))$. Also notice that for any vertex $p$ in $N_1(v_2') \setminus (\Pi' \cup \mathcal C_{\mb n_1 + \mb n_2}(S))$ other than $v_2' - \mb e_2$, the vertex $p + \mb e_2$ lies in $N_1(v_3') \setminus 
\Pi'$. Hence we only need a separate treatment for $v_2' - \mb e_2$, i.e. when it does not lie in $\mathcal C_{\mb n_1 + \mb n_2}(S)$. To this end we consider several sub-cases based on the neighboring vertices of $v_2' - \mb e_2$ and the value of $d_1$.

\medskip\noindent{\em Case~2-a. $v_2' - \mb e_2 + \mb e \notin \mathcal C_{\mb n_1 + \mb n_2}(S)$ for some $\mb e \in \{\pm \mb e_j: j \le d\} \setminus \{-\mb 
e_1, \pm \mb e_2\}$}. Notice that $v_2'  + \mb e \in N_1(v_2') \setminus \Pi'$. Since $N_1(v_2')$ does not intersect $\mathcal C_{\mb n_1 + \mb n_2}(S)$, it follows that the path $(v_2' - \mb e_2, v_2' - \mb e_2 + \mb e, v_2' + \mb e)$ lies in $N_2(\Pi') \setminus (\Pi' 
\cup \mathcal C_{\mb n_1 + \mb n_2}(S))$. Now $v_2' + \mb e$, being a neighbor of $v_2'$ other than $v_2' - \mb e_2$, has a neighbor in $N_1(v_3') \setminus \Pi'$ as we 
already saw above. Thus, like Case~1, we may choose $\Pi_{\mb B} = (v_0', v_1', \ldots, v_{t}')$ for $t = \min \{t' \le d_1: v_{t'}' \in \mathcal C_{\mb n_1}(y)\}$.

\medskip\noindent{\em Case~2-b. $v_2' - \mb e_2 + \mb e \in \mathcal C_{\mb n_1 + \mb n_2}(S)$ for each $\mb e \in \{\pm \mb e_j: j \le d\} \setminus \{-\mb 
	e_1, \pm \mb e_2\}$ and $d_1 > 3$}. Let us modify $\Pi'$ slightly to obtain a new path $\Pi'' \coloneqq (v_2' - 
\mb e_2 + \mb e_1, v_2' + \mb e_1, v_2' + \mb e_2 + \mb e_1, v_3', \ldots, v_{d_1}')$ and $\mathcal C_{\mb n_1 + \mb n_2}(S)$ restricted to the box of 
side-length $6dn$ with the same center as $\mb B$ and $u$. In the same spirit as in the case of $\Pi'$, the only problematic vertex in $N_2(\Pi'') \setminus (\Pi'' \cup \mathcal C_{\mb n_1 + \mb n_2}(S))$ is $(v_2' + \mb e_2 + 
\mb e_1) + \mb e_1 = v_3' + 2\mb e_1$. Hence we can apply the argument from Case~2-a to $\Pi''$ \emph{unless} $v_3' + 2\mb e_1 + \mb e \in \mathcal C_{\mb n_1 + \mb n_2}(S)$ for each $\mb e \in \{\pm \mb e_j: j \in [d]\} 
\setminus \{\pm \mb e_1, - \mb e_2\}$. 
But in that case, since $d_1 > 3$, there must be a vertex of $\mathcal C_{\mb n_1 + \mb n_2}(S)$ within a graph distance of 2 of $v_4$ and thus at a graph distance 
strictly smaller than $d_1$ of $u$ which contradicts the definition of $d_1$.

\medskip\noindent{\em Case~2-c. $v_2' - \mb e_2 + \mb e \in \mathcal C_{\mb n_1 + \mb n_2}(S)$ for each $\mb e \in \{\pm \mb e_j: j \le d\} \setminus \{-\mb 
	e_1, \pm \mb e_2\}$ and $d_1 = 3$}. In view of our discussion in the previous subcase, the only problematic scenario is the following. We have that $v_t' = v_0' + t \mb e_1$ for $t \leq 2$, $u = v_3' = v_2' + \mb e_2$ and $u + 2\mb e_1 + \mb e \in \mathcal C_{\mb n_1 + \mb n_2}(S)$ for each $\mb e \in \{\pm \mb e_j: j \in 
	[d]\} \setminus \{\pm \mb e_1, -\mb e_2\}$. Now recall that $u$ has at least two neighbors in $\mathcal C_{\mb n_1}(y)$ which, we claim, gives us in this case two vertices $w$ and $w'$ in $\mathcal C_{\mb n_1 + \mb n_2}(S)$ and $\mathcal C_{\mb n_1}(y)$ respectively such that $w' = w + 2 \mb e$ for some $\mb e \in \{\pm \mb e_j: j \le d\}$ and $w'$ is within a distance 1 of ${\bf B}$. Then it is straightforward to construct $\Pi_{\mb B}$ from $w$ and $w'$ as we already remarked before starting our case studies. In order to verify our claim, we need to consider three distinct possibilities for the two neighbors of $u$ that lie in $\mathcal C_{\mb n_1}(y)$.
	
	\medskip (i) $v_2' = u - \mb e_2 \in \mathcal C_{\mb n_1}(y)$. In this case we can choose $w = v_0'$ and $w' = v_2'$.
	
    \smallskip (iii) {\em $u + \mb e_1 \in \mathcal C_{\mb n_1}(y)$.} Here our choices are $w = v_2' - \mb e_2 + \mb e_1$ and $w' = u + \mb e_1$.
    
	\smallskip (iii) {\em $u + \mb e \in \mathcal C_{\mb n_1}(y)$ for some $\mb e \in \{\pm \mb e_j: j \leq d\} \setminus \{\pm \mb e_1, -\mb e_2\}$.} In this case we choose $w = u + 2\mb e_1 + \mb e$ and $w' = u + \mb e$.

\bigbreak
\noindent {\bf Case~3.} $d_\infty = 1$ and $d_1 \geq 3$.
\medbreak
In this case, let us assume without loss of generality that $v_1' = v_0' + \mb e_1$, $v_2' = v_1' + \mb e_2$ and 
$v_3' = v_2' + \mb e_3$. Notice that any vertex $p$ in $N_1(v_1') \setminus (\Pi' \cup \mathcal C_{\mb n_1 + \mb n_2}(S))$ other than $v_1' - \mb e_2$ has a neighbor in $N_1(v_2') \setminus \Pi'$, namely $p + \mb e_2$. Also noting that the vertices $v_1' - \mb e_2 + \mb e_3$ and $v_1' + \mb e_3$ lie in $N_2(v_3)\setminus \Pi'$ and $N_1(v_3)\setminus \Pi'$ respectively, we deduce that $(v_1' - \mb e_2, v_1' - \mb e_2 + \mb e_3, v_1' + \mb e_3)$ is a path in $N_2(\Pi') \setminus (\Pi' \cup \mathcal C_{\mb n_1 + \mb n_2}(S))$ if $v_1' - \mb e_2 \notin 
\mathcal C_{\mb n_1 + \mb n_2}(S)$. 

As to the vertices in $N_1(v_2') \setminus \Pi'$, we find by the same reasoning as in the analysis of Case~2-a that the only problematic scenario is $v_2' - \mb e_3 \notin \mathcal C_{\mb n_1 + \mb n_2}(S)$ and $v_2' - \mb e_3 + \mb e \in \mathcal C_{\mb n_1 + \mb n_2}(S)$ for every $\mb e \in \{\pm \mb e_j: j \le d\} \setminus \{-\mb e_2, \pm \mb e_3\}$. However, in this scenario we would have $\|(v_2' - \mb e_3 + \mb e_2) - u\|_1 = d_1$ whereas $\|(v_2' - \mb e_3 + \mb e_2) - u\|_\infty = 2$, thus reducing the problem to Case~2 with $v_0' = v_2' - \mb e_3 + \mb e_2$, which belongs to $\mathcal C_{\mb n_1 + \mb n_2}(S')$ and is within a distance of at most $3dn$ of the center of ${\bf B}$.
\bigbreak
\noindent {\bf Case~4.} $d_\infty = 1$ and $d_1 = 2$.
\medbreak
Let us assume without loss of generality that $u = v_0' + 
\mb e_1 + \mb e_2$ and $v_1' = v_0' + \mb e_1$. In this case we only need to connect any vertex in $N_1(v_1') \setminus (\Pi' \cup \mathcal C_{\mb n_1 + \mb n_2}(S))$ 
to a vertex in $N_1(u) \setminus \Pi'$. To this end notice that any vertex $p$ in $N_1(v_1') \setminus (\Pi' \cup \mathcal C_{\mb n_1 + \mb n_2}(S))$ other than $v_1' - \mb e_2$ has a neighbor $p + \mb e_2$ in $N_1(u) \setminus 
\Pi'$. Now if $v_1' - \mb e_2 \notin \mathcal C_{\mb n_1 + \mb n_2}(S)$ and $u + \mb e$ is a neighbor of $u$ in $\mathcal C_{\mb n_1}(y)$ which is not $u + \mb e_2$ (recall that there are at least two of them), then it is easy to see that either the path $(v_1' - \mb e_2, v_1' - \mb e_2 + \mb e, v_1' + \mb e)$ lies in $N_2(\Pi') \setminus (\Pi' \cup \mathcal C_{\mb n_1 + \mb n_2}(S))$ or there exists a vertex $w$ in $\mathcal C_{\mb n_1 + \mb n_2}(S)$ satisfying $w = (u + \mb e) + 2\mb e'$ or $w = (u + \mb e) + \mb e'$ some $\mb e' \in 
\{\pm \mb e_j: j \le d\}$. In both cases, the choice of $\Pi_B$ is clear.
\end{proof}
 
\end{document}